\newtheorem{theorem}{Theorem}[section]
\newtheorem{lemma}[theorem]{Lemma}
\newtheorem{example}[theorem]{Example}
\newtheorem{remark}{Remark}
\renewcommand{\div}{\operatorname{div}}
\newcommand{\sym}{\operatorname{sym}}
\begin{document}
\title[Conforming Virtual Elements in Arbitrary Dimension]{$H^m$-Conforming Virtual Elements in Arbitrary Dimension}
\author{Chunyu Chen}%
\address{Hunan Key Laboratory for Computation and Simulation in Science and Engineering; School of Mathematics and Computational Science, Xiangtan University, Xiangtan 411105, P.R.China}%
\email{202131510114@smail.xtu.edu.cn}%
\author{Xuehai Huang$^{\ast}$}%
\address{School of Mathematics, Shanghai University of Finance and Economics, Shanghai 200433, China}%
\email{huang.xuehai@sufe.edu.cn}%
\author{Huayi Wei}%
\address{Hunan Key Laboratory for Computation and Simulation in Science and Engineering; School of Mathematics and Computational Science, Xiangtan University, Xiangtan 411105, P.R.China}%
\email{weihuayi@xtu.edu.cn}%


\thanks{* Corresponding author.}
\thanks{The second author was supported by the National Natural Science Foundation of
China (NSFC) (Grant No. 12171300), and the Natural Science Foundation of Shanghai
(Grant No. 21ZR1480500). The first and third authors were supported by the National Natural
Science Foundation of China (NSFC) (Grant No. 12261131501, 11871413), and the construction of innovative provinces in Hunan Province (Grant No. 2021GK1010)}

\makeatletter
\@namedef{subjclassname@2020}{\textup{2020} Mathematics Subject Classification}
\makeatother


\subjclass[2020]{
  65N12;   
  65N15;   
  65N22;   
  65N30;   
}

\keywords{$H^m$-conforming virtual elements, Whitney array, error analysis, polyharmonic equation}

\begin{abstract}
The $H^m$-conforming virtual elements of any degree $k$ on any shape of polytope in $\mathbb R^n$ with $m, n\geq1$ and $k\geq m$ are recursively constructed by gluing conforming virtual elements on faces in a universal way. For the lowest degree case $k=m$, the set of degrees of freedom only involves function values and derivatives up to order $m-1$ at the vertices of the polytope. The inverse inequality and several norm equivalences for the $H^m$-conforming virtual elements are rigorously proved. The $H^m$-conforming virtual elements are then applied to discretize a polyharmonic equation with a lower order term. With the help of the interpolation error estimate and norm equivalences, the optimal error estimates are derived for the $H^m$-conforming virtual element method.
\end{abstract}

\maketitle


\section{Introduction}

Recently Hu, Lin and Wu constructed $H^m$-conforming finite elements of degree $k$ on simplices in $\mathbb R^n$ with $k\geq 2^n(m-1)+1$ and $m, n\geq1$ in a unified way~\cite{HuLinWu2021}, which generalizes the finite elements in two dimensions in \cite{BrambleZlamal1970,Zenisek1970,ArgyrisFriedScharpf1968} and the finite elements in three dimensions in \cite{Zenisek1974a,Zhang2009a,Zhang2016a}. The simplical lattice is used in \cite{ChenHuang2021Cmgeodecomp} to show the geometric decomposition of smooth finite elements. 
The work \cite{HuLinWu2021} is theoretically important, and is a significant progress in the field of construction of $H^m$-conforming elements in $\mathbb R^n$.
Since polynomial shape functions are infinitely differentiable,
the $2^{n-1}(m-1)$th order derivatives of shape functions at vertices are included in the degrees of freedom (DoFs), which results in the very high polynomial degree $k\geq 2^n(m-1)+1$ for $H^m$-conforming finite elements. In \cite{Xu2020}, Xu devised $H^m$-conforming piecewise polynomials based on the artificial neural network with $k\geq m$ and then
developed a finite neuron method, whose practical value is also limited since solving the underlying non-linear and non-convex optimization problem is challenging. We refer to \cite{HuZhang2015a} for $H^m$-conforming finite elements on macro-hypercubes and \cite{FuGuzmanNeilan2020} for $H^2$-conforming finite elements on macro-simplices in arbitrary dimension. 

Alternatively, in \cite{ChenHuang2020,Huang2020} we devised $H^m$-nonconforming virtual elements of any degree $k$ on any shape of polytope $K$ in $\mathbb R^n$ with $k\geq m$ in a universal way by employing a generalized Green’s identity.
When $K$ is a simplex, $1\leq m\leq n$ and $k = m$, the virtual elements in \cite{ChenHuang2020} are exactly the nonconforming finite elements in \cite{WangXu2013,WangXu2006}.
And when $K$ is a simplex, $m=n+1$ and $k = m$, 
the DoFs of the virtual elements in \cite{Huang2020} are same as those of the nonconforming finite elements in \cite{WuXu2019}.
We refer to \cite{WuXu2017,HuZhang2017,HuZhang2019} for more $H^m$-nonconforming finite elements and~\cite{ZhaoChenZhang2016,ZhaoZhangChenMao2018,AntoniettiManziniVerani2018} for more $H^m$-nonconforming virtual elements.

We shall construct $H^m$-conforming virtual elements of any degree $k$ of polynomials on a very general polytope $K \subset\mathbb R^n$ in arbitrary dimension $n$ and any derivative order $m$ with $k\geq m$ and $m, n\geq1$ in this paper.
The $H^1$-conforming virtual elements were initially developed in~\cite{BeiraodaVeigaBrezziCangianiManziniEtAl2013,BeiraodaVeigaBrezziMariniRusso2014} in two and three dimensions.
The $H^m$-conforming virtual elements of degree $k$ for $k\geq m$ and $m\geq1$ in two dimensions have been designed in a series of works~\cite{BeiraoManzini2014,AntoniettiManziniVerani2020,AntoniettiManziniScacchiVerani2021,BrezziMarini2013}.
In three dimensions, the $H^2$-conforming virtual elements for $k\geq 2$ were devised in \cite{BeiraodaVeigaDassiRusso2020}.
When $K$ is a tetrahedron in three dimensions, by using the Argyris element \cite{ArgyrisFriedScharpf1968,BrennerSung2005} and Hermite element \cite{Ciarlet1978} on faces, $H^2$-conforming virtual elements for $k\geq 5$ were advanced in~\cite{ChenHuang2022}.
A different approach is adopted in \cite{BrennerSung2019} to construct $H^2$-conforming virtual elements on tetrahedrons.
We intend to extend these works to arbitrary spacial dimension $n$, any order $m$ of Sobolev spaces and any polynomial degree $k\geq m$. 

We construct $H^m$-conforming virtual elements $(K, \mathcal N_k^m(K), V_k^m(K))$ 
by gluing conforming virtual elements on faces recursively.
The virtual element space is defined as 
\begin{align*}
V_k^m(K):=\big\{ v\in H^m(K): &(-\Delta)^mv\in \mathbb P_{k}(K),\; (v-\Pi_k^Kv, q)_K=0\;\;\forall~q\in\mathbb P_{k-2m}^{\perp}(K),  \\
& (\nabla^jv)|_{\mathcal S_K^r}\in H^1(\mathcal S_K^r;\mathbb S_{n}(j)) \textrm{ for } j=0,1,\cdots,m-1, \\
&\frac{\partial^{|\alpha|}v}{\partial \nu_F^{\alpha}}\Big|_F\in V_{k-|\alpha|}^{m-|\alpha|}(F)\quad\forall~F\in\mathcal F^{r}(K),\\
&\qquad\qquad\quad\;\;\, r=1,\cdots, n-1, \alpha\in A_r, \textrm{ and } |\alpha|\leq m-1\big\}    
\end{align*}
with $V_{k-|\alpha|}^{m-|\alpha|}(e):=\mathbb P_{\max\{k-|\alpha|, 2(m-|\alpha|)-1\}}(e)$ for each one-dimensional edge $e\in \mathcal F^{n-1}(K)$, where the local $H^m$-projection operator $\Pi_k^K$ is introduced to ensure the $L^2$-orthogonal projection $Q_k^Kv$ is computable using only the DoFs in $\mathcal N_k^m(K)$ for any virtual function $v\in V_k^m(K)$ following the idea in \cite{AhmadAlsaediBrezziMariniEtAl2013}.
When $n\geq2$, $\mathbb P_k(K)\subseteq V_k^m(K)$ but $\mathbb P_{k+1}(K)\not\subseteq V_k^m(K)$.
The DoFs in $\mathcal N_k^m(K)$ are motivated by $\frac{\partial^{|\alpha|}v}{\partial \nu_F^{\alpha}}\Big|_F\in V_{k-|\alpha|}^{m-|\alpha|}(F)$ in the definition of $V_k^m(K)$. 
With the help of the concepts of data spaces and Whitney arrays \cite{Verchota1990},
the dimension of $V_k^m(K)$ is exactly counted by using the inverse trace theorem of $H^m(K)$ and the well-posedness of the $m$th harmonic equation with Dirichlet boundary conditions.

For the lowest degree case $k=m$, the set of DoFs $\mathcal N_m^m(K)$ is very simple, only involving  function values and derivatives up to order $m-1$ at the vertices of polytope $K$, i.e.
$$
h_K^j\nabla^{j}v(\delta) \quad\forall~\delta\in\mathcal F^{n}(K), \;j=0,1,\cdots,m-1.
$$
Here the scaling $h_K^j$ is used so that all the DoFs share the same order of magnitude.
These DoFs are even simpler than those of non-conforming virtual elements in \cite{ChenHuang2020,Huang2020}.
If furthermore $K\subset\mathbb R^n$ is a simplex, $\dim V_m^m(K)=(n+1)\dim\mathbb P_{m-1}(K)$, which is much smaller than the dimension $\dim \mathbb P_{2^n(m-1)+1}(K)$ of the lowest degree $H^m$-conforming finite element in \cite{HuLinWu2021}.
And there are no super-smooth DoFs included in $\mathcal N_k^m(K)$, i.e., all the orders of the derivatives involved in the DoFs are less than $m$.
This is one of the attractive features of virtual elements.

Another contribution of this paper is establishing the inverse inequality and norm equivalences for the $H^m$-conforming virtual elements $(K, \mathcal N_k^m(K), V_k^m(K))$ under the assumption that the polytope $K$ is star-shaped and all the diameters of all faces of $K$ are equivalent to the diameter of $K$. 
The inverse inequality for $V_k^m(K)$ is derived from the multiplicative trace inequality, the inverse trace theorem, the inverse inequality for polynomials and the mathematical induction.
Employing the inverse inequality, the trace inequality and the Poincar\'e-Friedrichs inequality,
we arrive at several norm equivalences on virtual element spaces $V_{k}^{m}(K)$, $\ker(Q_k^K)\cap V_{k}^{m}(K)$ and $\ker(\Pi_k^K)\cap V_{k}^{m}(K)$, where $\ker(T)\cap V_{k}^{m}(K):=\{v\in V_{k}^{m}(K): Tv=0\}$ with operator $T=Q_k^K$ or $\Pi_k^K$. Especially we acquire the classical $L^2$ norm equivalence as finite elements
\begin{align*}
\|v\|_{0,K}^2&\eqsim \|Q_{k-2m}^Kv\|_{0,K}^2 + \sum_{\delta\in\mathcal F^{n}(K)}\sum_{i=0}^{m-1}h_K^{n+2i}|\nabla^{i}v(\delta)|^2 \notag\\
&\quad
+ \sum_{r=1}^{n-1}\sum_{F\in\mathcal F^{r}(K)}\sum_{\alpha\in A_{r}, |\alpha|\leq m-1}h_K^{r+2|\alpha|}\Big\|Q_{k-2m+|\alpha|}^{F}\frac{\partial^{|\alpha|}v}{\partial \nu_{F}^{\alpha}}\Big\|_{0,F}^2 \quad \forall~v\in V_{k}^{m}(K),
\end{align*}
in which all terms in the right hand side completely coincide with all the DoFs in $\mathcal N_k^m(K)$. This extends the stability analysis of virtual elements in \cite{BrennerGuanSung2017,BeiraodaVeigaLovadinaRusso2017,ChenHuang2018,BrennerSung2018,HuangYu2021}.

The constructed conforming virtual elements are then applied to discretize a polyharmonic equation with a lower order term.
To analyze the conforming virtual element method, we construct a qausi-interpolation operator and derive the interpolation error estimate with the help of the norm equivalence on $V_{k}^{m}(K)$. 
Finally the optimal error estimates are presented for the conforming virtual element method.
This paper is motivated by the theoretical purposes. We also present numerical results for a fourth-order elliptic problem and a sixth-order elliptic problem in two dimensions.

The rest of this paper is organized as follows. Some notations and mesh conditions are shown in Section~\ref{sec:pre}. In Section~\ref{sec:vem} $H^m$-conforming virtual elements are constructed.
The inverse equality and several norm equivalences are proved in Section~\ref{sec:invnorm}.
In Section~\ref{sec:vempolyharmonic} the $H^m$-conforming virtual elements are applied to discretize a polyharmonic equation with a lower order term. And numerical results are provided in Section~\ref{sec:numericalexamps}.

\section{Preliminaries}\label{sec:pre}
\tdplotsetmaincoords{80}{150}

\subsection{Notation}
In this paper we will adopt the same notations as in \cite{ChenHuang2020,Huang2020}.
For any non-negative integer $r$ and $1\leq\ell\leq n$, notation $\mathbb T_{\ell}(r):=\underbrace{\mathbb R^{\ell}\otimes\cdots\otimes\mathbb R^{\ell}}_{r}$ stands for the set of $r$-tensor spaces over $\mathbb R^{\ell}$.
Introduce the symmetric $r$-tensor space
$$
\mathbb S_{\ell}(r):=\{\tau=(\tau_{i_1i_2\cdots i_r})\in\mathbb T_{\ell}(r): \tau_{i_{\sigma(1)}i_{\sigma(2)}\cdots i_{\sigma(r)}}=\tau_{i_1i_2\cdots i_r} \textrm{ for any } \sigma\in\mathfrak{S}_{r}\},
$$
where $\mathfrak{S}_{r}$ is the set of all permutations of $(1, 2, \cdots, r)$.
For tensor $\tau\in \mathbb T_{\ell}(r)$, the symmetric part of $\tau$ is a symmetric tensor in $\mathbb S_{\ell}(r)$ defined by
$$
(\sym\tau)_{i_1i_2\cdots i_r}:=\frac{1}{r!}\sum_{\sigma\in\mathfrak{S}_{r}}\tau_{i_{\sigma(1)}i_{\sigma(2)}\cdots i_{\sigma(r)}}\quad\textrm{ for } 1\leq i_1, i_2,\cdots, i_r\leq \ell.
$$
Given $r$-tensors $\tau, \varsigma\in \mathbb T_{\ell}(r)$, define the scalar product $\tau:\varsigma\in \mathbb R$ by 
\[
\tau:\varsigma:=\sum_{i_1=1}^{\ell}\cdots\sum_{i_r=1}^{\ell} \tau_{i_1,\cdots, i_{r}}\varsigma_{i_1,\cdots,i_{r}}.
\]
Denote by $\mathbb N$ the set of all non-negative integers.
For an $n$-dimensional multi-index $\alpha = (\alpha_1, \cdots , \alpha_n)$ with $\alpha_i\in\mathbb N$, define $|\alpha|:=\sum\limits_{i=1}^n\alpha_i$ and $\alpha!=\alpha_1!\cdots\alpha_n!$.
For $0\leq j \leq n$, let $A_{j}$ be the set consisting of all multi-indexes $\alpha$ with $\sum\limits_{i=j+1}^n\alpha_i=0$, i.e., non-zero index only exists for $1\leq i\leq j$.

Let $\Omega\subset \mathbb{R}^n~(n\geq 1)$ be a bounded
polytope with positive integer $n$.
Given a bounded domain $G\subset\mathbb{R}^{n}$ and a
non-negative integer $k$, let $H^k(G; \mathbb X)$ be the usual Sobolev space of functions
over $G$ taking values in the tensor space $\mathbb X$ for $\mathbb X=\mathbb T_{\ell}(r), \mathbb S_{\ell}(r)$, whose norm and semi-norm are denoted by
$\Vert\cdot\Vert_{k,G}$ and $|\cdot|_{k,G}$ respectively. Set $H^k(G):=H^k(G; \mathbb T_{\ell}(0))$.
Define $H_0^k(G)$ as the closure of $C_{0}^{\infty}(G)$ with
respect to the norm $\Vert\cdot\Vert_{k,G}$.
Let $(\cdot, \cdot)_G$ be the standard inner product on $L^2(G; \mathbb X)$. If $G$ is $\Omega$, we abbreviate
$\Vert\cdot\Vert_{k,G}$, $|\cdot|_{k,G}$ and $(\cdot, \cdot)_G$ by $\Vert\cdot\Vert_{k}$, $|\cdot|_{k}$ and $(\cdot, \cdot)$,
respectively.
Denote by $h_G$ the diameter of $G$.
Let $\mathbb P_k(G)$ be the set of all
polynomials over $G$ with the total degree no more than $k$, whose tensorial version space is denoted by $\mathbb P_{k}(G; \mathbb X)$. Let $\mathbb P_k(G):=\{0\}$ if $k<0$.
Let $Q_k^{G}$ be the $L^2$-orthogonal projection onto $\mathbb P_k(G; \mathbb X)$. For a function $v$, $Q_k^{G}v$ is understood as $v|_G$ when $G$ is a point, whether $k$ is non-negative or negative.
For non-negative integers $k$ and $m$, let $\mathbb P_{k-2m}^{\perp}(G)\subseteq\mathbb P_{k}(G)$ be the orthogonal complement space of $\mathbb P_{k-2m}(G)$ of $\mathbb P_{k}(G)$ with respect to the inner product $(\cdot,\cdot)_G$.
Denote by $\#S$ the number of elements in a finite set $S$.

Let $\{\mathcal {T}_h\}$ be a family of partitions
of $\Omega$ into nonoverlapping simple polytopal elements with $h:=\max_{K\in \mathcal {T}_h}h_K$.
Let $\mathcal{F}_h^r$ be the set of all $(n-r)$-dimensional faces
of the partition $\mathcal {T}_h$ for $r=1, 2, \cdots, n$.
For simplicity, let $\mathcal{F}_h^0:=\mathcal {T}_h$.
Moreover, we set for each $K\in\mathcal{T}_h$
\[
\mathcal{F}^r(K):=\{F\in\mathcal{F}_h^r: F\subset\partial K\}.
\]
The supscript $r$ in $\mathcal{F}_h^r$ represents the co-dimension of an $(n-r)$-dimensional face $F$. 
Similarly, we define
\[
\mathcal F^s(F):=\{e\in\mathcal{F}_h^{r+s}: e\subset\overline{F}\}.
\]
Here $s$ is the co-dimension relative to the face $F$. 
For any $F\in\mathcal F_h^r$ with $r=0,1,\cdots, n-2$,
let the $(n-r-s)$-dimensional skeleton $\mathcal S_F^s$ be the union of all faces in $\mathcal F^{s}(F)$ for $s=1,\cdots,n-r-1$.


For any $F\in\mathcal{F}_h^r$ with $1\leq r\leq n-1$, let $\nu_{F,1}, \cdots, \nu_{F,r}$ be its
mutually perpendicular unit normal vectors, and $t_{F,1}, \cdots, t_{F,n-r}$ be its
mutually perpendicular unit tangential vectors.
We abbreviate $\nu_{F,1}$ as $\nu_{F}$ when $r=1$, and $t_{F,1}$ as $t_{F}$ when $r=n-1$. We refer to Fig.~\ref{fig:normaltangentialvectors} for an example of normal vectors and tangential vectors.
\begin{figure}[htbp]
  \centering
\begin{tikzpicture}[%
    tdplot_main_coords,
    scale=1.5,
    >=stealth]   
    \draw[dashed] (-2,1,0)--(0,0,0) -- (2,0,0);
    \draw (2,0,0) -- (2,2,0) -- (0,2,0) -- (-2,1,0);
    \draw (2,0,0) -- (2,2,0) -- (2,2,2) -- (2,0,2) -- cycle;
    \draw (0,2,0) --(0,2,2) --  (2,2,2);
    \draw[dashed] (2,0,0) -- (0,0,0) --(0,0,2);
    \draw (0,2,2) --  (0,0,2) --  (2,0,2);
    \draw[dashed] (-2,1,0) --(0,0,2);
    \draw (-2,1,0) --  (0,2,2);
    \draw[thick,->] (-1-2*0.2,1.5-1*0.2,1-2*0.2) -- (-1+2*0.2,1.5+1*0.2,1+2*0.2);
    \draw[thick,->] (-1-2*0.2,1.5-1*0.2,1-2*0.2) -- (-1-2*0.2 - 1/3,1.5-1*0.2 +2/3 ,1-2*0.2);
    \draw[thick,->] (-1-2*0.2,1.5-1*0.2,1-2*0.2) -- (-1-2*0.2 - 4/8,1.5-1*0.2 -2/8 ,1-2*0.2 +5/8);
    \draw(-1+2*0.01,1.5-1*0,1-2*0.05) node{$t_e$};
    \draw(-1-2*0.4,1.5-1*0,1-2*0.32) node{$\nu_{e,1}$};
    \draw(-1-2*0.4,1.5-1*0,1-2*0.02) node{$\nu_{e,2}$};
    \draw[thick,->] (0.6,0.6,2) -- (0.6,0.6,2+0.65);
    \draw(1,1,2.8) node{$\nu_F$};
    \draw[thick,->] (0.6,0.6,2) -- (0.6+0.7,0.6,2);
    \draw(1.65,1,2) node{$t_{F,1}$};
    \draw[thick,->] (0.6,0.6,2) -- (0.6,0.6+0.9,2);
    \draw(1.12,1.8,2.07) node{$t_{F,2}$};
  \end{tikzpicture}     
  \caption{Normal vectors and tangential vectors for face $F$ and edge $e$ of a polyhedron}\label{fig:normaltangentialvectors}
\end{figure}
Define the surface gradient on $F$ as
\begin{equation*}
\nabla_{F}v:=\nabla v-\sum_{i=1}^r\frac{\partial v}{\partial\nu_{F,i}}\nu_{F,i}=\sum_{i=1}^{n-r}\frac{\partial v}{\partial t_{F,i}}t_{F,i},
\end{equation*}
namely the projection of $\nabla v$ to the face $F$, which is independent of the choice of the normal vectors. And denote by $\div_{F}$ the corresponding surface divergence.
For any $\delta\in\mathcal{F}_h^n$ and $i=1,\cdots, n$, let $\nu_{\delta,i}:=e_i=(0,\cdots,0,1,0,\cdots,0)^{\intercal}$ be the $n$-tuple with all entries equal to $0$, except the $i$th, which is $1$.
For any $F\in\mathcal F^{r}_h$, $\alpha\in A_{r}$ and $\beta\in A_{n-r}$ with $r=1,\cdots, n$, set
$$
\nu_{F}^{\alpha}:=\nu_{F,1}^{\alpha_1}\otimes\cdots\otimes\nu_{F,r}^{\alpha_r},\quad t_{F}^{\beta}:=t_{F,1}^{\beta_1}\otimes\cdots\otimes t_{F,n-r}^{\beta_{n-r}},
$$
\[
\frac{\partial^{|\alpha|}v}{\partial\nu_{F}^{\alpha}}:=\frac{\partial^{|\alpha|}v}{\partial\nu_{F, 1}^{\alpha_1}\cdots\partial\nu_{F, r}^{\alpha_{r}}},\quad \frac{\partial^{|\beta|}v}{\partial t_{F}^{\beta}}:=\frac{\partial^{|\beta|}v}{\partial t_{F, 1}^{\beta_1}\cdots\partial t_{F, n-r}^{\beta_{n-r}}},
\]
where $\nu_{F,i}^{\alpha_i}:=\underbrace{\nu_{F,i}\otimes\cdots\otimes\nu_{F,i}}_{\alpha_i}$ and $t_{F,i}^{\beta_i}:=\underbrace{t_{F,i}\otimes\cdots\otimes t_{F,i}}_{\beta_i}$.
For any $e\in\mathcal F^s(F)$ with $1\leq s< n-r$, let $\nu_{F,e,1}, \cdots, \nu_{F,e,s}$ be its
mutually perpendicular unit normal vectors paralleling to $F$, and abbreviate $\nu_{F,e,1}$ as $\nu_{F,e}$ when $s=1$. And for any $\delta\in\mathcal F^{n-r}(F)$, let $\nu_{F,\delta,i}:=t_{F,i}$ for $i=1,\cdots, n-r$.
Set
$$
\nu_{F,e}^{\beta}:=\nu_{F,e,1}^{\beta_1}\otimes\cdots\otimes\nu_{F,e,s}^{\beta_s},\quad
\frac{\partial^{|\beta|}v}{\partial\nu_{F,e}^{\beta}}:=\frac{\partial^{|\beta|}v}{\partial\nu_{F, e, 1}^{\beta_1}\cdots\partial\nu_{F, e, s}^{\beta_{s}}}
$$
for any $\beta\in A_{s}$ with $s=1,\cdots, n-r$.
For any $K\in\mathcal T_h$, $\delta\in \mathcal F^n(K)$, and any function $v$ defined on $K$,
we will rewrite $v(\boldsymbol x_{\delta})$ as $v(\delta)$ for simplicity, where $\boldsymbol x_{\delta}$ is the position of the point $\delta$.


\subsection{Mesh conditions}
We impose the following conditions on the mesh $\mathcal T_h$.
\begin{itemize}
 \item[(A1)] Each element $K\in \mathcal T_h$ and each face $F\in \mathcal F_h^r$ for $1\leq r\leq n-1$ is star-shaped with a uniformly bounded chunkiness parameter. For a domain $D$, the chunkiness parameter $\gamma_D:=h_D/\rho_D$, where $\rho_D$ is the radius of the largest ball contained in $D$.

 \item[(A2)] There exists a real number $\eta>0$ such that for each $K\in \mathcal T_h$, $h_K\leq\eta h_F$ for all $F\in\mathcal F^r(K)$ with $r=1,\cdots, n-1$.
\end{itemize}

Throughout this paper, we also use
``$\lesssim\cdots $" to mean that ``$\leq C\cdots$", where
$C$ is a generic positive constant independent of mesh size $h$, but may depend on the chunkiness parameter of the polytope, constant $\eta$, the degree of polynomials $k$, the order of differentiation $m$, and the dimension of space $n$,
which may take different values at different appearances. And $A\eqsim B$ means $A\lesssim B$ and $B\lesssim A$.
Hereafter, we always assume $k\geq m$.

For a star-shaped domain $D$, it holds the multiplicative trace inequality (cf. \cite[Theorem 1.5.1.10]{Grisvard1985})
\begin{equation}\label{L2tracemulti}
\|v\|_{0,\partial D}^2 \lesssim h_D^{-1}\|v\|_{0,D}(\|v\|_{0,D} + h_D |v|_{1,D}) \quad \forall~v\in H^1(D).
\end{equation}
This implies the trace inequality (cf. \cite[(2.18)]{BrennerSung2018})
\begin{equation}\label{L2trace}
\|v\|_{0,\partial D}^2 \lesssim h_D^{-1}\|v\|_{0,D}^2 + h_D |v|_{1,D}^2 \quad \forall~v\in H^1(D).
\end{equation}
When $D$ is a set of a finite number of points, the notation $\|v\|_{0, D}$ means $\|v\|_{L^{\infty}(D)}$.
We also have the Poincar\'e-Friedrichs inequality \cite[(2.15)]{BrennerSung2018}
\begin{equation}\label{eq:Poincare-Friedrichs2}
\|v\|_{0,D}\lesssim h_D|v|_{1,D}\quad \forall~v\in H_0^1(D),
\end{equation}
and 
the inverse inequality for polynomials \cite[Lemma~10]{Huang2020}
\begin{equation}\label{eq:polyinverse}
 \| q \|_{0,D} \lesssim h_D^{-i}\| q \|_{-i,D} \quad \forall~q\in \mathbb P_{\ell}(D)
\end{equation}
for any non-negative integers $\ell$ and $i$.
As a result of \eqref{eq:polyinverse}, the Bramle-Hilbert lemma \cite[Lemma 4.3.8]{BrennerScott2008} and \eqref{L2trace},
it holds the estimate of the $L^2$-orthogonal projection
\begin{equation}\label{eq:l2projectionestimate}
h_D^{i}|v-Q_k^Dv|_{i,D}+ h_D^{1/2}\|v-Q_k^Dv\|_{0,\partial D}\lesssim h_D^{j}|v|_{j,D}\quad\forall~v\in H^j(D) 
\end{equation}
for $0\leq i\leq j\leq k+1$ with $i,j,k$ being non-negative integers.
The hidden constants in \eqref{L2tracemulti}-\eqref{eq:l2projectionestimate} depend on the chunkiness parameter $\eta$ and the spatial dimension $n$.

\section{$H^m$-Conforming Virtual Elements}\label{sec:vem}

We will construct $H^m$-conforming virtual elements $(K, \mathcal N_k^m(K), V_k^m(K))$ for any integers $m,n\geq1$, $k\geq m$ and $n$-dimensional polytope $K\subset\mathbb R^n$ by gluing conforming virtual elements on faces recursively.

We first list a Green's identity for later uses.

\begin{lemma}
For any $v\in H^m(K)$ and $q\in H^{2m}(K)$,
\begin{equation}\label{eq:greenidentity}
(\nabla^mv, \nabla^mq)_K=(v, (-\Delta)^mq)_K+\sum_{i=0}^{m-1}(\nabla^{i}v, \nabla^{i}(-\Delta)^{m-i-1}\partial_{\nu}q)_{\partial K},
\end{equation}
where $\partial_{\nu}q|_F:=\frac{\partial q}{\partial\nu_{F}}$ for each face $F\in\mathcal F^1(K)$.
\end{lemma}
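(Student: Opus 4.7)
The plan is to iterate the standard Green's first identity $m$ times. By density of smooth functions in $H^m(K)$ and $H^{2m}(K)$, together with continuity of both sides in $v$ and $q$ separately, it suffices to prove the identity for $v, q \in C^\infty(\bar K)$.

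The key step I would establish is the one-step recursion
$$
(\nabla^m v, \nabla^m q)_K = -(\nabla^{m-1} v, \nabla^{m-1} \Delta q)_K + (\nabla^{m-1} v, \nabla^{m-1} \partial_\nu q)_{\partial K}.
$$
To prove this, I would expand the left-hand side componentwise as $\sum_{i_1,\dots,i_m}\int_K \partial_{i_1}\cdots\partial_{i_m} v \cdot \partial_{i_1}\cdots\partial_{i_m} q\,dx$, peel off the innermost derivative $\partial_{i_m}$ from both factors, and apply the scalar Green's first identity $\int_K \nabla u \cdot \nabla w\,dx = -\int_K u\,\Delta w\,dx + \int_{\partial K} u\,\partial_\nu w\,ds$ inside the sum over $i_m$, with $u = \partial_{i_1}\cdots\partial_{i_{m-1}} v$ and $w = \partial_{i_1}\cdots\partial_{i_{m-1}} q$. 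On each flat face $F\in\mathcal F^1(K)$ the outward normal $\nu_F$ is constant, so $\partial_\nu$ commutes with the other partial derivatives face by face, allowing me to rewrite $\partial_\nu(\partial_{i_1}\cdots\partial_{i_{m-1}} q)$ as $\partial_{i_1}\cdots\partial_{i_{m-1}} \partial_\nu q$ in the boundary term.

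Iterating this recursion $m$ times and tracking the signs produces
$$
(\nabla^m v, \nabla^m q)_K = (-1)^m (v, \Delta^m q)_K + \sum_{\ell=0}^{m-1} (-1)^\ell (\nabla^{m-\ell-1} v, \nabla^{m-\ell-1} \partial_\nu \Delta^\ell q)_{\partial K}.
$$
A reindexing $i := m-\ell-1$, combined with the identity $(-1)^j \Delta^j = (-\Delta)^j$, absorbs the alternating signs into the polyharmonic operator and delivers exactly the stated formula. The only substantive obstacle is notational: the tensorial bookkeeping and sign tracking during iteration, together with the face-by-face commutation of $\partial_\nu$ with other partials on the polytopal boundary; no analytic difficulty arises beyond repeated integration by parts.
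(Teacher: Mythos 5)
Your proposal is correct and follows essentially the same route as the paper: the paper's proof is the one-step integration-by-parts identity
$(\nabla^{i+1}v, \nabla^{i+1}(-\Delta)^{m-i-1}q)_K = (\nabla^{i}v, \nabla^{i}(-\Delta)^{m-i}q)_K +(\nabla^{i}v, \nabla^{i}(-\Delta)^{m-i-1}\partial_{\nu}q)_{\partial K}$
summed telescopically over $i=0,\dots,m-1$, which is exactly your one-step recursion written for $(-\Delta)^{m-i-1}q$ in place of $q$, so the iteration, sign bookkeeping, and reindexing you describe are equivalent to the paper's summation.
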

\begin{proof}
For $i=0,1,\cdots, m-1$, applying the integration by parts, it follows
$$
(\nabla^{i+1}v, \nabla^{i+1}(-\Delta)^{m-i-1}q)_K = (\nabla^{i}v, \nabla^{i}(-\Delta)^{m-i}q)_K +(\nabla^{i}v, \nabla^{i}(-\Delta)^{m-i-1}\partial_{\nu}q)_{\partial K}.
$$
Thus \eqref{eq:greenidentity} holds from the sum of the last identity from $i = 0$ to $m-1$.
\end{proof}

\begin{lemma}
Let $F\in\mathcal{F}^r(K)$ with $1\leq r\leq n-1$, and integer $j>0$. It holds for any smooth function $v$ that
\begin{equation}\label{eq:20210413}
\nabla^jv=\sum_{\alpha\in A_r, \beta\in A_{n-r}\atop|\alpha|+|\beta|= j}\frac{j!}{\alpha!\beta!}\sym(\nu_F^{\alpha}\otimes t_F^{\beta})\frac{\partial^{j}v}{\partial t_F^{\beta}\partial \nu_F^{\alpha}}.
\end{equation}
\end{lemma}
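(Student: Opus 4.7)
The plan is to expand each copy of $\nabla$ in the orthonormal basis $\{\nu_{F,1},\ldots,\nu_{F,r},t_{F,1},\ldots,t_{F,n-r}\}$ of $\mathbb{R}^n$ and then repackage the result using the symmetry of iterated partials. Concretely, I would start from the elementary identity
\begin{equation*}
\nabla = \sum_{i=1}^{r}\nu_{F,i}\,\partial_{\nu_{F,i}} + \sum_{i=1}^{n-r}t_{F,i}\,\partial_{t_{F,i}},
\end{equation*}
and iterate $j$ times (or, equivalently, argue by induction on $j$ via $\nabla^{j+1}v=\nabla\otimes\nabla^{j}v$) to write $\nabla^j v$ as a sum of $n^j$ terms of the form $e_{s_1}\otimes\cdots\otimes e_{s_j}\,\partial_{e_{s_1}}\cdots\partial_{e_{s_j}}v$, where each $e_{s_\ell}$ ranges over the chosen orthonormal basis.

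Since $v$ is smooth, mixed partials commute, so the differential operator attached to each term depends only on how many times each basis vector appears in the sequence $(s_1,\ldots,s_j)$. I would then collect the terms according to pairs of multi-indices $(\alpha,\beta)\in A_r\times A_{n-r}$ with $|\alpha|+|\beta|=j$, where $\alpha_i$ records the multiplicity of $\nu_{F,i}$ and $\beta_i$ that of $t_{F,i}$. For a fixed pair $(\alpha,\beta)$, every such ordering yields the common derivative factor $\partial^{j}v/(\partial t_F^{\beta}\partial\nu_F^{\alpha})$, and the geometric coefficient reduces to the raw sum of tensor products $e_{s_1}\otimes\cdots\otimes e_{s_j}$ over the $j!/(\alpha!\beta!)$ distinct orderings of type $(\alpha,\beta)$.

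The final step is to identify this sum with the symmetrized tensor appearing in \eqref{eq:20210413}. By the definition of $\sym$ recorded in the paper, $\sym(\nu_F^\alpha\otimes t_F^\beta)$ averages over all $j!$ permutations of positions, but each distinct ordering of type $(\alpha,\beta)$ is counted with multiplicity $\alpha!\,\beta!$ (the size of the stabilizer obtained by permuting identical basis vectors among themselves). Hence
\begin{equation*}
\sum_{(s_1,\ldots,s_j)\,\text{of type}\,(\alpha,\beta)}e_{s_1}\otimes\cdots\otimes e_{s_j}=\frac{j!}{\alpha!\,\beta!}\,\sym(\nu_F^\alpha\otimes t_F^\beta),
\end{equation*}
which, plugged into the previous display, yields the claimed identity.

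I expect the only nontrivial step to be this last combinatorial identification between the raw sum over orderings and the symmetrization: one must verify carefully that the stabilizer contribution exactly cancels the $1/j!$ in the definition of $\sym$. The rest is purely the multinomial theorem applied to commuting directional derivative operators, and contains no analytical content.
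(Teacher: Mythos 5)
Your proof is correct and follows essentially the same route as the paper: expand $\nabla$ in the orthonormal basis $\{\nu_{F,1},\ldots,\nu_{F,r},t_{F,1},\ldots,t_{F,n-r}\}$, iterate, use commutativity of mixed partials, and invoke the multinomial theorem together with the definition of $\sym$. The paper states this very tersely by first writing $\nabla^j v=\sym(\nabla^jv)$ and then applying the multinomial theorem inside $\sym$; you instead expand the raw sum over orderings and identify, by counting the stabilizer of size $\alpha!\,\beta!$, the sum of type-$(\alpha,\beta)$ tensor products with $\frac{j!}{\alpha!\beta!}\sym(\nu_F^\alpha\otimes t_F^\beta)$ --- which is precisely the combinatorial content hidden in the paper's phrase ``applying the multinomial theorem.''
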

\begin{proof}
Recalling that $\nabla^jv$ is a symmetric $j$-tensor,
apparently it follows
\begin{equation}\label{eq:20210403}
\nabla^jv=\sym(\nabla^jv)=\sym\bigg(\sum_{i=1}^r\nu_{F,i}\frac{\partial }{\partial\nu_{F,i}}+\sum_{i=1}^{n-r}t_{F,i}\frac{\partial }{\partial t_{F,i}}\bigg)^jv.
\end{equation}
We conclude \eqref{eq:20210413} by applying the multinomial theorem to \eqref{eq:20210403}.
\end{proof}

\subsection{$H^m$-conforming virtual elements in one dimension}
We start from one dimension, i.e. $n=1$. Now the polytope $K$ is an interval. 
The DoFs $\mathcal N_k^m(K)$ are chosen as
\begin{align}
h_K^jv^{(j)}(\delta) & \quad\forall~\delta\in\mathcal F^{1}(K), \;j=0,1,\cdots,m-1, \label{eq:cfmvem1ddof1}\\
\frac{1}{|K|}(v, q)_K & \quad\forall~q\in\mathbb P_{k-2m}(K), \label{eq:cfmvem1ddof2}
\end{align}
where $v^{(j)}$ is the $j$th order derivative of $v$.
And take the space of shape functions
$$
V_k^m(K):=\big\{ v\in H^m(K): v^{(2m)}\in \mathbb P_{k-2m}(K)\big\}.
$$
Clearly we have
$$
V_k^m(K)=\begin{cases}
\mathbb P_{k}(K), & k\geq 2m-1,\\
\mathbb P_{2m-1}(K), & k<2m-1.
\end{cases}
$$
Hence the $H^m$-conforming virtual element of degree $k$ in one dimension is exactly the $C^{m-1}$-continuous finite element, whose shape functions are polynomials of degree $\max\{k, 2m-1\}$.
And the $H^m$-conforming virtual elements $(K, \mathcal N_k^m(K), V_k^m(K))$ coincide with the nonconforming ones in \cite[Remark~1]{Huang2020}.

\subsection{$H^m$-conforming virtual elements in two dimensions}
Then we consider the construction of the $H^m$-conforming virtual elements in two dimensions, i.e. $n=2$, where the polytope $K$ is a polygon.
The $H^m$-conforming virtual elements $(K, \mathcal N_k^m(K), V_k^m(K))$ in two dimensions have been designed in \cite{BeiraoManzini2014,AntoniettiManziniVerani2020,AntoniettiManziniScacchiVerani2021,BrezziMarini2013}.
Here we review them to motivate the construction of $H^m$-conforming virtual elements in higher dimensions.

The space of shape functions in the virtual elements is defined through local partial differential equations \cite{BeiraodaVeigaBrezziCangianiManziniEtAl2013,AntoniettiManziniVerani2020}.
To ensure the $L^2$ projection $Q_k^{K}v$ is computable for any virtual element function $v$  by using the DoFs, following the idea in \cite{AhmadAlsaediBrezziMariniEtAl2013} 
we first define a preliminary virtual element space with the help of the conforming virtual elements in one dimension
\begin{align*}
\widetilde{V}_k^m(K):=\Big\{ v\in H^m(K):& (-\Delta)^mv\in \mathbb P_{k}(K), \\
&\frac{\partial^jv}{\partial \nu_{e}^j}\Big|_e\in V_{k-j}^{m-j}(e)\quad\forall~e\in\mathcal F^{1}(K),\; j=0,1,\cdots, m-1\Big\}.    
\end{align*}
Clearly $\mathbb P_k(K)\subseteq\widetilde{V}_k^m(K)$. On the other hand, $\frac{\partial^{m-1}v}{\partial \nu_e^{m-1}}\Big|_e\in V_{k-(m-1)}^{1}(e)=\mathbb P_{k-m+1}(e)$, thus $\mathbb P_{k+1}(K)\not\subseteq\widetilde{V}_k^m(K)$.

\begin{lemma}\label{lem:continuous2d}
For any $v\in\widetilde{V}_k^m(K)$, $\nabla^jv$ is continuous on $\partial K$, and $(\nabla^jv)|_{\partial K}\in H^1(\partial K;\mathbb S_{2}(j))$ for $j=0,1,\cdots, m-1$.
\end{lemma}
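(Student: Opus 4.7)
The plan is to show that each Cartesian component of $\nabla^{j}v$ extends to a continuous function on $\partial K$ for every $0\leq j\leq m-1$. Since $\partial K$ is the union of the closed edges of $K$ and any polynomial on an edge is continuous up to the endpoints, continuity on $\partial K$ will reduce to verifying matching at the vertices of $K$.

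First I would fix an edge $e\in\mathcal{F}^{1}(K)$ and exploit identity~\eqref{eq:20210413} with $r=1$, $F=e$ to write
\[
\nabla^{j}v|_{e}=\sum_{a+b=j}\binom{j}{a}\sym(\nu_{e}^{a}\otimes t_{e}^{b})\,\frac{\partial^{j}v}{\partial t_{e}^{b}\partial\nu_{e}^{a}}.
\]
Because $a\leq j\leq m-1$, the trace $\frac{\partial^{a}v}{\partial\nu_{e}^{a}}|_{e}$ lies in $V_{k-a}^{m-a}(e)$, which is a univariate polynomial space on $e$; tangential differentiation preserves polynomiality, so $\nabla^{j}v|_{e}$ is a polynomial tensor on $e$ and therefore continuous up to the endpoints of $e$.

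Next I would match values at a vertex $\delta\in\mathcal{F}^{2}(K)$ common to two edges via the trace theorem. For any Cartesian multi-index $\alpha$ with $|\alpha|\leq m-1$, the derivative $\partial^{\alpha}v$ lies in $H^{m-|\alpha|}(K)\subset H^{1}(K)$, so its trace belongs to $H^{1/2}(\partial K)$ and is piecewise polynomial by the first step. A piecewise polynomial function on the Lipschitz curve $\partial K$ that belongs to $H^{1/2}(\partial K)$ cannot have a jump at any vertex: any such jump would produce a logarithmic divergence in the Slobodeckij seminorm near that vertex. Hence the two polynomial limits of $\partial^{\alpha}v$ from the two edges at $\delta$ coincide, which gives the desired continuity of $\nabla^{j}v$ on $\partial K$.

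For $j\leq m-2$ there is the shorter alternative of invoking the Sobolev embedding $H^{m-j}(K)\hookrightarrow C^{0}(\overline{K})$, valid in two dimensions because $m-j\geq 2$; this renders only the top-order case $j=m-1$ genuinely reliant on the fractional trace argument. The main obstacle I anticipate is making the corner-continuity statement rigorous, namely justifying that a piecewise polynomial on $\partial K$ in $H^{1/2}(\partial K)$ must be continuous at the vertices. This is classical, but needs either a direct estimate of the Slobodeckij integral of a piecewise polynomial with a corner jump, or a citation to a characterization of $H^{s}$-traces on polygonal boundaries.
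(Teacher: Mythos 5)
Your proof is correct and follows essentially the same route as the paper: use the expansion~\eqref{eq:20210413} on each edge to see that $\nabla^j v|_e$ has polynomial components (since the normal traces $\frac{\partial^{j-\ell}v}{\partial\nu_e^{j-\ell}}|_e$ lie in the one-dimensional virtual element spaces, which are polynomial spaces), then invoke $v\in H^m(K)$ to forbid jumps at the vertices. The only difference is cosmetic: where you supply the explicit Slobodeckij-seminorm divergence argument (or the Sobolev embedding $H^{m-j}(K)\hookrightarrow C^0(\overline K)$ for $j\le m-2$) to rule out corner jumps, the paper simply cites the characterization of $H^{1/2}(\partial K)$ in Grisvard (comments after Theorem~1.5.2.3) for the same purpose. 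The obstacle you flag at the end is exactly the step the paper resolves by that citation, and your direct estimate is a valid alternative justification.
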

\begin{proof}
On each edge $e\in\mathcal F^1(K)$, it holds from \eqref{eq:20210413} that
\begin{equation}\label{eq:202104072d}
\nabla^jv=\sum_{\ell=0}^j\frac{j!}{\ell!(j-\ell)!}\sym(t_e^{\ell}\otimes\nu_e^{j-\ell})\frac{\partial^{\ell}}{\partial t_e^{\ell}}\left(\frac{\partial^{j-\ell}v}{\partial\nu_e^{j-\ell}}\right).
\end{equation}
By the definition of $\widetilde{V}_k^m(K)$, $\frac{\partial^{j-\ell}v}{\partial\nu_e^{j-\ell}}\Big|_e\in V_{k-j+\ell}^{m-j+\ell}(e)=\mathbb P_{\max\{k-j+\ell, 2m-1-2j+2\ell\}}(e)$ is a polynomial. Hence it follows from \eqref{eq:202104072d} that $\nabla^jv|_e\in\mathbb P_{\max\{k-j, 2m-1-j\}}(e,\mathbb S_2(j))$ is a tensor with components being polynomials. Finally we acquire from the fact $v\in H^m(K)$ that $\nabla^jv$ is continuous on $\partial K$ for $j=0,1,\cdots, m-1$ (cf. comments after Theorem~1.5.2.3 in \cite{Grisvard1985}), which also means $(\nabla^jv)|_{\partial K}\in H^1(\partial K;\mathbb S_{2}(j))$.
\end{proof}

In the definition of $\widetilde{V}_k^m(K)$, $\frac{\partial^jv}{\partial \nu_e^j}\Big|_e\in V_{k-j}^{m-j}(e)$ and the DoFs \eqref{eq:cfmvem1ddof1}-\eqref{eq:cfmvem1ddof2} in one dimension motivate us that the DoFs in two dimensions should cover the following ones
\begin{align*}
h_e^i\frac{\partial^i}{\partial t_e^i}\left(\frac{\partial^jv}{\partial \nu_e^j}\Big|_e\right)(\delta) & \quad\forall~\delta\in\mathcal F^{1}(e), \;i=0,1,\cdots,m-j-1, \\
\frac{1}{|e|}(\frac{\partial^jv}{\partial \nu_e^j}, q)_e & \quad\forall~q\in\mathbb P_{k-j-2(m-j)}(e). 
\end{align*}
Noting that $\nabla^jv$ is continuous on $\partial K$ for $j=0,1,\cdots, m-1$, we propose the following DoFs $\mathcal N_k^m(K)$ for the $H^m$-conforming virtual elements in two dimensions
\begin{align}
h_K^j\nabla^{j}v(\delta) & \quad\forall~\delta\in\mathcal F^{2}(K), \;j=0,1,\cdots,m-1, \label{eq:cfmvem2ddof1}\\
\frac{1}{|e|^{1-j}}(\frac{\partial^jv}{\partial \nu_e^j}, q)_e & \quad\forall~q\in\mathbb P_{k-2m+j}(e), e\in\mathcal F^{1}(K), \;j=0,1,\cdots,m-1, \label{eq:cfmvem2ddof2}\\
\frac{1}{|K|}(v, q)_K & \quad\forall~q\in\mathbb P_{k-2m}(K). \label{eq:cfmvem2ddof3}
\end{align}

To define the space of shape functions $V_k^m(K)$, we also need a local $H^m$ projection operator $\Pi_k^K: H^m(K)\to\mathbb P_k(K)$: given $v\in H^m(K)$, let $\Pi_k^Kv\in\mathbb P_k(K)$ be the solution of the problem 
\begin{align}
(\nabla^m\Pi_k^Kv, \nabla^mq)_K&=(\nabla^mv, \nabla^mq)_K\quad  \forall~q\in \mathbb P_k(K),\label{eq:projlocal2d1}\\
\sum_{\delta\in\mathcal F^{2}(K)}(\nabla^{j}\Pi_k^Kv)(\delta)&=\sum_{\delta\in\mathcal F^{2}(K)}(\nabla^{j}v)(\delta), \quad j=0,1,\cdots, m-1.\label{eq:projlocal2d2}
\end{align}
The number of equations in~\eqref{eq:projlocal2d2} is
\[
\sum_{j=0}^{m-1}(j+1)=\frac{1}{2}m(m+1)=\dim\mathbb P_{m-1}(K).
\]
Applying the argument in \cite[Section~3.3 and Lemma~3.5]{ChenHuang2020}, the local problem \eqref{eq:projlocal2d1}-~\eqref{eq:projlocal2d2} is well-posed, and it holds
\begin{equation}\label{eq:localproj2dprop1}
\Pi_k^Kq=q \quad\forall~q\in\mathbb P_k(K).    
\end{equation}
Notice that $\nabla^jv|_e\in \mathbb P_{\max\{k-j, 2m-1-j\}}(e,\mathbb S_2(j))$ is a tensor with polynomial components for each $e\in\mathcal F^1(K)$ and $j=0,1,\cdots, m-1$, which is computable using the DoFs~\eqref{eq:cfmvem2ddof1}-\eqref{eq:cfmvem2ddof2} for any $v\in \widetilde{V}_k^m(K)$.
Then we get from the Green's identity \eqref{eq:greenidentity} that
the projection $\Pi_k^Kv$ is computable using only the DoFs~\eqref{eq:cfmvem2ddof1}-\eqref{eq:cfmvem2ddof3} for any $v\in \widetilde{V}_k^m(K)$.

Following the ideas in \cite{AhmadAlsaediBrezziMariniEtAl2013,ChenHuang2020}, define the space of shape functions
$$
V_k^m(K):=\{v\in \widetilde{V}_k^m(K): (v-\Pi_k^Kv, q)_K=0\quad\forall~q\in\mathbb P_{k-2m}^{\perp}(K)\}.
$$
Due to \eqref{eq:localproj2dprop1}, it holds $\mathbb P_k(K)\subseteq V_k^m(K)$ and $\mathbb P_{k+1}(K)\not\subseteq V_k^m(K)$.
Therefore we arrive at the $H^m$-conforming virtual elements $(K, \mathcal N_k^m(K), V_k^m(K))$ in two dimensions.
The uni-solvence of $(K, \mathcal N_k^m(K), V_k^m(K))$ will be covered in the arbitrary dimension in Subsections~\ref{subsec:dataspacetrace} and \ref{subsec:vemunisolvence}.

For any $v\in V_k^m(K)$, since $(v-\Pi_k^Kv, q-Q_{k-2m}^Kq)_K=0$ for each $q\in\mathbb P_k(K)$, we have
$$
(Q_k^K-Q_{k-2m}^K)(v-\Pi_k^Kv)=Q_k^K(I-Q_{k-2m}^K)(v-\Pi_k^Kv)=0.
$$
This yields
$$
Q_k^Kv= \Pi_k^Kv + Q_{k-2m}^Kv-Q_{k-2m}^K\Pi_k^Kv.
$$
Hence $Q_k^Kv$ is computable using only the DoFs~\eqref{eq:cfmvem2ddof1}-\eqref{eq:cfmvem2ddof3} for any $v\in V_k^m(K)$.
This combined with the integration by parts implies that $Q_{k+j}^K(\nabla^jv)$ is computable using only the DoFs~\eqref{eq:cfmvem2ddof1}-\eqref{eq:cfmvem2ddof3} for any $v\in V_k^m(K)$ and $j=1, \cdots, m$.


\subsection{$H^m$-conforming virtual elements in three dimensions}
Next we construct the $H^m$-conforming virtual elements for $k\geq m$ and $m\geq1$ in three dimensions.
Several $H^2$-conforming virtual elements in three dimensions are devised in~\cite{BeiraodaVeigaDassiRusso2020,ChenHuang2022,BrennerSung2019}.

Let polyhedron $K\subset\mathbb R^3$.
Similarly as the two dimensions, 
we first define a preliminary virtual element space
\begin{align*}
\widetilde{V}_k^m(K):=\big\{ &v\in H^m(K): (-\Delta)^mv\in \mathbb P_{k}(K), \\
& (\nabla^jv)|_{\mathcal S_K^r}\in H^1(\mathcal S_K^r;\mathbb S_{3}(j)) \;\textrm{ for } j=0,1,\cdots,m-1 \textrm{ and } r=1,2, \\
&\frac{\partial^{j}v}{\partial \nu_F^{j}}\Big|_F\in V_{k-j}^{m-j}(F)\;\textrm{ for } F\in\mathcal F^{1}(K), j=0,1,\cdots,m-1,\\
&\hskip-0.4cm\frac{\partial^{j}v}{\partial \nu_{e,1}^{i}\partial \nu_{e,2}^{j-i}}\Big|_e\in V_{k-j}^{m-j}(e)\;\textrm{ for } e\in\mathcal F^{2}(K), 0\leq i\leq j,\, j=0,1,\cdots, m-1\}.    
\end{align*}
The requirement $(\nabla^jv)|_{\mathcal S_K^r}\in H^1(\mathcal S_K^r;\mathbb S_{3}(j))$ in the definition of $\widetilde{V}_k^m(K)$ is motivated by Lemma~\ref{lem:continuous2d}.
Since $\mathbb P_{k-j}(F)\in V_{k-j}^{m-j}(F)$ and $\mathbb P_{k-j}(e)\in V_{k-j}^{m-j}(e)$, we have $\mathbb P_k(K)\subseteq\widetilde{V}_k^m(K)$.
Take $v\in\widetilde{V}_k^m(K)$.
By the definition of $\widetilde{V}_k^m(K)$, $\frac{\partial^{j-\ell}v}{\partial \nu_{e,1}^{i}\partial \nu_{e,2}^{j-\ell-i}}\Big|_e\in V_{k-j+\ell}^{m-j+\ell}(e)=\mathbb P_{\max\{k-j+\ell, 2m-1-2j+2\ell\}}(e)$ is a polynomial for each edge $e\in\mathcal F^{2}(K)$. It follows from \eqref{eq:20210413} that $\nabla^jv|_e\in\mathbb P_{\max\{k-j, 2m-1-j\}}(e,\mathbb S_3(j))$ is a tensor with components being polynomials. By $(\nabla^jv)|_{\mathcal S_K^{2}}\in H^1(\mathcal S_K^{2};\mathbb S_{3}(j))$, 
$\nabla^jv$ is continuous on the one-dimensional skeleton $\mathcal S_K^{2}$.
For $F\in\mathcal{F}^1(K)$ and $e\in\mathcal{F}^2(K)$, applying \eqref{eq:20210413}, we have $\nabla^jv|_F\in H^{m-j}(F; \mathbb S_{3}(j))$ and $\nabla^jv|_e\in H^{m-j}(e; \mathbb S_{3}(j))$.

Inspired by $\frac{\partial^{j}v}{\partial \nu_{e,1}^{i}\partial \nu_{e,2}^{j-i}}\Big|_e\in V_{k-j}^{m-j}(e)$, $\frac{\partial^{j}v}{\partial \nu_F^{j}}\Big|_F\in V_{k-j}^{m-j}(F)$, the DoFs~\eqref{eq:cfmvem1ddof1}-\eqref{eq:cfmvem1ddof2} and the DoFs~\eqref{eq:cfmvem2ddof1}-\eqref{eq:cfmvem2ddof3}, 
we propose the following DoFs $\mathcal N_k^m(K)$ for the $H^m$-conforming virtual elements in three dimensions
\begin{align}
h_K^j\nabla^{j}v(\delta) & \quad\forall~\delta\in\mathcal F^{3}(K), \;j=0,1,\cdots,m-1, \label{eq:cfmvem3ddof1}\\
\frac{h_K^{j}}{|e|}(\frac{\partial^{j}v}{\partial \nu_{e,1}^{i}\partial \nu_{e,2}^{j-i}}, q)_e & \quad\forall~q\in\mathbb P_{k-2m+j}(e), e\in\mathcal F^{2}(K), \label{eq:cfmvem3ddof2}\\
&\quad \quad 0\leq i\leq j,\, j=0,1,\cdots, m-1, \notag\\
\frac{h_K^{j}}{|F|}(\frac{\partial^{j}v}{\partial \nu_F^{j}}, q)_F & \quad\forall~q\in\mathbb P_{k-2m+j}(F), F\in\mathcal F^{1}(K), \;j=0,1,\cdots,m-1, \label{eq:cfmvem3ddof3}\\
\frac{1}{|K|}(v, q)_K & \quad\forall~q\in\mathbb P_{k-2m}(K). \label{eq:cfmvem3ddof4}
\end{align}

To define the space of shape functions $V_k^m(K)$, we introduce a local $H^m$-projector $\Pi_k^K: H^m(K)\to\mathbb P_k(K)$: given $v\in H^m(K)$, let $\Pi_k^Kv\in\mathbb P_k(K)$ be the solution of the problem 
\begin{align}
(\nabla^m\Pi_k^Kv, \nabla^mq)_K&=(\nabla^mv, \nabla^mq)_K\quad  \forall~q\in \mathbb P_k(K),\label{eq:projlocal3d1}\\
\sum_{\delta\in\mathcal F^{3}(K)}(\nabla^{j}\Pi_k^Kv)(\delta)&=\sum_{\delta\in\mathcal F^{3}(K)}(\nabla^{j}v)(\delta), \quad j=0,1,\cdots, m-1.\label{eq:projlocal3d2}
\end{align}
The number of equations in~\eqref{eq:projlocal3d2} is
\[
\sum_{j=0}^{m-1}C_{j+2}^{2}=C_{m+2}^{3}=\dim\mathbb P_{m-1}(K).
\]
The problem \eqref{eq:projlocal3d1}-\eqref{eq:projlocal3d2} is well-posed, and it holds the identity
\begin{equation}\label{eq:localproj3dprop1}
\Pi_k^Kq=q \quad\forall~q\in\mathbb P_k(K).    
\end{equation}
For $v\in\widetilde{V}_k^m(K)$, $\nabla^jv|_e$ on edge $e\in\mathcal{F}^2(K)$ is clearly computable by using the DoFs \eqref{eq:cfmvem3ddof1}-\eqref{eq:cfmvem3ddof2} for $j=0,1,\cdots, m-1$, since $\nabla^jv|_e$ is a tensor-valued polynomial.
By \eqref{eq:20210413}, $Q_{k-j}^F(\nabla^jv)$ is computable by using the DoFs \eqref{eq:cfmvem3ddof1}-\eqref{eq:cfmvem3ddof3} for $F\in\mathcal{F}^1(K)$ and $j=0,1,\cdots, m-1$. 
Therefore it follows from \eqref{eq:greenidentity} that the projection $\Pi_k^Kv$ is computable using only the DoFs~\eqref{eq:cfmvem3ddof1}-\eqref{eq:cfmvem3ddof4} for any $v\in \widetilde{V}_k^m(K)$.


Define the space of shape functions
$$
V_k^m(K):=\{v\in \widetilde{V}_k^m(K): (v-\Pi_k^Kv, q)_K=0\quad\forall~q\in\mathbb P_{k-2m}^{\perp}(K)\}.
$$
Due to \eqref{eq:localproj3dprop1}, it holds $\mathbb P_k(K)\subseteq V_k^m(K)$.
We finish the construction of the $H^m$-conforming virtual elements $(K, \mathcal N_k^m(K), V_k^m(K))$ in three dimensions.

\subsection{$H^m$-conforming virtual elements in arbitrary dimension}
Now we construct the $H^m$-conforming virtual elements for $k\geq m$ and $m\geq1$ in arbitrary dimension recursively.

Let polytope $K\subset\mathbb R^n$ with $n\geq 2$.
Assume $H^{\ell}$-conforming virtual elements $(F, \mathcal N_{k_{\ell}}^{\ell}(F), V_{k_{\ell}}^{\ell}(F))$ for $\ell=1,\cdots, m$ and $k_{\ell}\geq\ell$ have been constructed for each $F\in\mathcal F^r(K)$ with $r=1,\cdots, n-1$. The DoFs $\mathcal N_{k_{\ell}}^{\ell}(F)$ are given by
\begin{align}
h_F^j\nabla_F^{j}v(\delta) & \quad\forall~\delta\in\mathcal F^{n-r}(F), \;j=0,1,\cdots,\ell-1, \label{eq:cfmvemfacesdof1}\\
\frac{h_F^{|\alpha|}}{|e|}(\frac{\partial^{|\alpha|}v}{\partial \nu_{F,e}^{\alpha}}, q)_e & \quad\forall~q\in\mathbb P_{k_{\ell}-2\ell+|\alpha|}(e), e\in\mathcal F^{s}(F), \;s=1,\cdots,n-r-1, \label{eq:cfmvemfacesdof2}\\
&\quad \quad \alpha\in A_{s}, \textrm{ and } |\alpha|\leq \ell-1, \notag\\
\frac{1}{|F|}(v, q)_F & \quad\forall~q\in\mathbb P_{k_{\ell}-2\ell}(F). \label{eq:cfmvemfacesdof3}
\end{align}
And assume 
\begin{enumerate}[(i)]
\item $\mathbb P_{k_{\ell}}(F)\subseteq V_{k_{\ell}}^{\ell}(F)\subset H^{\ell}(F)$;
\item for any $v\in V_{k_{\ell}}^{\ell}(F)$, we have $(\nabla_F^jv)|_{\mathcal S_F^s}\in H^{1}(\mathcal S_F^s; \mathbb S_{n-r}(j))$ and $(\nabla_F^jv)|_{e}\in H^{\ell-j}(e; \mathbb S_{n-r}(j))$ for $e\in\mathcal F^s(F)$, $j=0,1,\cdots,\ell-1$ and $s=1,\cdots, n-r-1$;
\item for any $v\in V_{k_{\ell}}^{\ell}(F)$, $\frac{\partial^{|\beta|}v}{\partial \nu_{F,e}^{\beta}}\Big|_{e}\in V_{k_{\ell}-|\beta|}^{\ell-|\beta|}(e)$ for each $e\in\mathcal F^s(F)$, $\beta\in A_s$, $|\beta|\leq\ell-1$ and $s=1,\cdots, n-r-1$;
\item $Q_{k_{\ell}}^F(\nabla_F^jv)$ is computable using only the DoFs~\eqref{eq:cfmvemfacesdof1}-\eqref{eq:cfmvemfacesdof3} for any $v\in V_{k_{\ell}}^{\ell}(F)$ and $j=0,1,\cdots, \ell$.
\end{enumerate}
The assumption (ii) is inspired by Lemma~\ref{lem:continuous2d}.

First define a preliminary virtual element space
\begin{align*}
\widetilde{V}_k^m(K):=\big\{ v\in H^m(K): & (-\Delta)^mv\in \mathbb P_{k}(K), \\
& (\nabla^jv)|_{\mathcal S_K^r}\in H^1(\mathcal S_K^r;\mathbb S_{n}(j)) \textrm{ for } j=0,1,\cdots,m-1, \\
&\frac{\partial^{|\alpha|}v}{\partial \nu_F^{\alpha}}\Big|_F\in V_{k-|\alpha|}^{m-|\alpha|}(F)\quad\forall~F\in\mathcal F^{r}(K),\\
&\qquad\quad\;\;\, r=1,\cdots, n-1, \alpha\in A_r, \textrm{ and } |\alpha|\leq m-1\big\}.    
\end{align*}
By the assumption (i), we have $\mathbb P_k(K)\subseteq\widetilde{V}_k^m(K)$.
Take $v\in\widetilde{V}_k^m(K)$.
Applying the same argument as in Lemma~\ref{lem:continuous2d}, 
$\nabla^jv|_e\in\mathbb P_{\max\{k-j, 2m-1-j\}}(e,\mathbb S_n(j))$ for each edge $e\in\mathcal F^{n-1}(K)$ and $j=0,1,\cdots, m-1$, then it follows from $(\nabla^jv)|_{\mathcal S_K^{n-1}}\in H^1(\mathcal S_K^{n-1};\mathbb S_{n}(j))$ that
$\nabla^jv$ is continuous on the one-dimensional skeleton $\mathcal S_K^{n-1}$.
For any $F\in\mathcal{F}^r(K)$ with $1\leq r\leq n-1$, 
we get from the definition of $\widetilde{V}_k^m(K)$ and~\eqref{eq:20210413} that $\nabla^jv|_F\in H^{m-j}(F; \mathbb S_{n}(j))$.

Inspired by $\frac{\partial^{|\alpha|}v}{\partial \nu_F^{\alpha}}\Big|_F\in V_{k-|\alpha|}^{m-|\alpha|}(F)$ and the DoFs~\eqref{eq:cfmvemfacesdof1}-\eqref{eq:cfmvemfacesdof3}, 
we propose the following degrees of freedom (DoFs) $\mathcal N_k^m(K)$ for the $H^m$-conforming virtual elements in arbitrary dimension
\begin{align}
h_K^j\nabla^{j}v(\delta) & \quad\forall~\delta\in\mathcal F^{n}(K), \;j=0,1,\cdots,m-1, \label{eq:cfmvemdof1}\\
\frac{h_K^{|\alpha|}}{|F|}(\frac{\partial^{|\alpha|}v}{\partial \nu_F^{\alpha}}, q)_F & \quad\forall~q\in\mathbb P_{k-2m+|\alpha|}(F), F\in\mathcal F^{r}(K), \;r=1,\cdots,n-1, \label{eq:cfmvemdof2}\\
&\quad \quad \alpha\in A_r, \textrm{ and } |\alpha|\leq m-1, \notag\\
\frac{1}{|K|}(v, q)_K & \quad\forall~q\in\mathbb P_{k-2m}(K). \label{eq:cfmvemdof3}
\end{align}

To define the space of shape functions $V_k^m(K)$, we introduce a local $H^m$-projector $\Pi_k^K: H^m(K)\to\mathbb P_k(K)$: given $v\in H^m(K)$, let $\Pi_k^Kv\in\mathbb P_k(K)$ be the solution of the problem 
\begin{align}
(\nabla^m\Pi_k^Kv, \nabla^mq)_K&=(\nabla^mv, \nabla^mq)_K\quad  \forall~q\in \mathbb P_k(K),\label{eq:projlocal1}\\
\sum_{\delta\in\mathcal F^{n}(K)}(\nabla^{j}\Pi_k^Kv)(\delta)&=\sum_{\delta\in\mathcal F^{n}(K)}(\nabla^{j}v)(\delta), \quad j=0,1,\cdots, m-1.\label{eq:projlocal2}
\end{align}
The number of equations in~\eqref{eq:projlocal2} is
\[
\sum_{j=0}^{m-1}C_{n-1+j}^{n-1}=C_{n+m-1}^{n}=\dim\mathbb P_{m-1}(K).
\]
We refer to \cite[Section~3.3 and Lemma~3.5]{ChenHuang2020} for the well-posedness of \eqref{eq:projlocal1}-\eqref{eq:projlocal2}, and the identity
\begin{equation}\label{eq:localprojprop1}
\Pi_k^Kq=q \quad\forall~q\in\mathbb P_k(K).    
\end{equation}
By the assumption (iv) of conforming virtual elements on faces, $Q_{k-|\alpha|}^F(\nabla_F^{\ell}\frac{\partial^{|\alpha|}}{\nu_F^{\alpha}}v)$ is computable by using the DoFs \eqref{eq:cfmvemdof1}-\eqref{eq:cfmvemdof2} for any $v\in\widetilde{V}_k^m(K)$, $F\in\mathcal F^r(K)$, $\alpha\in A_r$, $|\alpha|\leq m-1$, $r=1,\cdots, n-1$, and $\ell=0,\cdots,m-|\alpha|$. This together with \eqref{eq:20210413} implies $Q_{k-j}^F(\nabla^jv)$ is computable by using the DoFs \eqref{eq:cfmvemdof1}-\eqref{eq:cfmvemdof2} for any $v\in\widetilde{V}_k^m(K)$. Therefore it follows from \eqref{eq:greenidentity} that the projection $\Pi_k^Kv$ is computable using only the DoFs~\eqref{eq:cfmvemdof1}-\eqref{eq:cfmvemdof3} for any $v\in \widetilde{V}_k^m(K)$.



Following the ideas in \cite{AhmadAlsaediBrezziMariniEtAl2013,ChenHuang2020}, define the space of shape functions
$$
V_k^m(K):=\{v\in \widetilde{V}_k^m(K): (v-\Pi_k^Kv, q)_K=0\quad\forall~q\in\mathbb P_{k-2m}^{\perp}(K)\}.
$$
Due to \eqref{eq:localprojprop1}, it holds $\mathbb P_k(K)\subseteq V_k^m(K)$.
Finally we finish the construction of the $H^m$-conforming virtual elements $(K, \mathcal N_k^m(K), V_k^m(K))$ in arbitrary dimension.

\subsection{Data spaces and trace}\label{subsec:dataspacetrace}
From now on in this section we will show that the DoFs~\eqref{eq:cfmvemdof1}-\eqref{eq:cfmvemdof3} are uni-solvent for the local virtual element space $V_k^m(K)$. The main difficulty is to count the dimension of $V_k^m(K)$.
To this end, we introduce data spaces
$$
\mathcal D(\partial K):=\prod_{\delta\in\mathcal F^n(K)}\prod_{j=0}^{m-1}\mathbb S_n(j) \times \prod_{r=1}^{n-1}\prod_{F\in\mathcal F^r(K)}\prod_{\alpha\in A_r\atop|\alpha|\leq m-1}\mathbb P_{k-2m+|\alpha|}(F),
$$
$$
\mathcal D(K):=\mathcal D(\partial K)\times\mathbb P_{k-2m}(K), \quad \widetilde{\mathcal D}(K):=\mathcal D(\partial K)\times\mathbb P_{k}(K).
$$
Clearly we have $\dim\mathcal D(K)=\#\mathcal N_k^m(K)$.
For simplicity, let notation $(d_{n}^{\delta, j}, d_{r}^{F, \alpha})\in\mathcal D(\partial K)$ mean
\begin{itemize}
\item $d_{n}^{\delta, j}\in\mathbb S_n(j)$ for each $\delta\in\mathcal F^n(K)$ and $j=0,1,\cdots,m-1$;
\item $d_{r}^{F, \alpha}\in\mathbb P_{k-2m+|\alpha|}(F)$ for each $F\in\mathcal F^r(K)$ with $r=1,\cdots, n-1$, $\alpha\in A_r$, and $|\alpha|\leq m-1$.
\end{itemize}
Notation $(d_{n}^{\delta, j}, d_{r}^{F, \alpha}, d_0)\in\mathcal D(K)$ means $(d_{n}^{\delta, j}, d_{r}^{F, \alpha})\in\mathcal D(\partial K)$ and $d_0\in\mathbb P_{k-2m}(K)$, and notation $(d_{n}^{\delta, j}, d_{r}^{F, \alpha}, d_0)\in\widetilde{\mathcal D}(K)$ is understood similarly.
We will show that both the mapping $\mathcal D_K: V_k^m(K)\to\mathcal D(K)$ given by 
$$
\mathcal D_Kv:=\bigg(\nabla^{j}v(\delta),\; Q_{k-2m+|\alpha|}^F\frac{\partial^{|\alpha|}v}{\partial \nu_F^{\alpha}},\; (-\Delta)^mv\bigg) \in \mathcal D(K) \quad\textrm{with}\quad v\in V_k^m(K),
$$
and the mapping $\widetilde{\mathcal D}_K: \widetilde{V}_k^m(K)\to\widetilde{\mathcal D}(K)$ given by 
$$
\widetilde{\mathcal D}_Kv:=\bigg(\nabla^{j}v(\delta),\; Q_{k-2m+|\alpha|}^F\frac{\partial^{|\alpha|}v}{\partial \nu_F^{\alpha}},\; (-\Delta)^mv\bigg) \in \widetilde{\mathcal D}(K) \quad\textrm{with}\quad v\in \widetilde{V}_k^m(K),
$$
are bijective. The idea of introducing data spaces can be found in \cite{ChenHuang2022}, and similar idea, i.e. degrees of freedom tuple, is advanced in \cite{AntoniettiManziniScacchiVerani2021}.

For a function $v\in H^m(K)$, the trace $\textrm{Tr}\,v:=\Big(v|_{\partial K}, \frac{\partial v}{\partial \nu_F}\Big|_{\partial K}, \cdots, \frac{\partial^{m-1}v}{\partial \nu_F^{m-1}}\Big|_{\partial K}\Big) \in H^{m-1/2}(\mathcal F^1(K))\times\cdots\times H^{1/2}(\mathcal F^1(K))$, where
$$
H^{s}(\mathcal F^1(K)):=\{v\in L^2(\partial K): v|_F\in H^{s}(F)\quad \forall~ F\in\mathcal F^1(K)\} \textrm{ for } s>0. 
$$
The trace space $\textrm{Tr}\,H^m(K)\neq H^{m-1/2}(\mathcal F^1(K))\times\cdots\times H^{1/2}(\mathcal F^1(K))$, since there exist some compatibility conditions among the components of $\textrm{Tr}\,v$ \cite{LambertiProvenzano2020}.
To present the characterization of the trace space $\textrm{Tr}\,H^m(K)$ in \cite{Agranovich2007,Agranovich2008,Verchota1990}, we first define the space of Whitney arrays 
\begin{align*}
\textrm{WA}(\partial K):=&\,\big\{\{g_{\alpha}\}_{\alpha\in A_n, |\alpha|\leq m-1}: g_{\alpha}\in H^1(\partial K)\;\; \forall~\alpha\in A_n \textrm{ with } |\alpha|\leq m-2, \\
&\qquad\qquad\qquad\qquad\quad\;\, g_{\alpha}\in H^{1/2}(\partial K)\;\; \forall~\alpha\in A_n \textrm{ with } |\alpha|= m-1, \\
&\quad\quad\quad\;\;\,\textrm{ and the compatibility conditions \eqref{eq:tracecompatiblecond} for } g_{\alpha} \textrm{ are satisfied}\big\},
\end{align*}
where the compatibility conditions are
\begin{equation}\label{eq:tracecompatiblecond}
(\nu_F)_j\partial_ig_{\alpha}-(\nu_F)_i\partial_jg_{\alpha}=(\nu_F)_jg_{\alpha+e_i}-(\nu_F)_ig_{\alpha+e_j} \;\;\textrm{ on each }F\in\mathcal F^1(K)
\end{equation}
for each $\alpha\in A_n, |\alpha|\leq m-2$ and $1\leq i\neq j\leq n$, $(\nu_F)_i=e_i\cdot\nu_F$ and $\partial_ig_{\alpha}=e_i\cdot\nabla g_{\alpha}$.

For $v\in H^m(K)$, clearly we have the array $\{\partial^{\alpha}v|_{\partial K}\}_{\alpha\in A_n, |\alpha|\leq m-1}\in \textrm{WA}(\partial K)$, where $\partial^{\alpha}v:=e^{\alpha}:\nabla^{|\alpha|}v$ with $e^{\alpha}:=e_1^{\alpha_1}\otimes\cdots\otimes e_n^{\alpha_n}$. And in this case, expressions in both sides of \eqref{eq:tracecompatiblecond} are two representations of some tangential derivative of the trace of $\partial^{\alpha}v$. Moreover, such a trace mapping is onto, which is listed in the following lemma.

\begin{lemma}[Theorem~5 in \cite{Agranovich2007}, Theorem~4 in \cite{Agranovich2008} and Theorem~R(m) in \cite{Verchota1990}]
\label{lem:inversetracethm}
Let $K\in\mathbb R^n$ be a polytope.
For each Whitney array $\{g_{\alpha}\}_{\alpha\in A_n, |\alpha|\leq m-1}\in \textrm{WA}(\partial K)$, there exists a function $v\in H^m(K)$ such that
$$
\partial^{\alpha}v|_{\partial K}=g_{\alpha}\quad\forall~\alpha\in A_n, |\alpha|\leq m-1.
$$
Moreover, there exists a linear and bounded operator from $\textrm{WA}(\partial K)$ to $H^m(K)$.
\end{lemma}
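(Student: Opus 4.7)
The plan is to construct a bounded right inverse of the trace map by a localization-and-gluing procedure: I would first reduce to the classical inverse trace theorem on a half-space, then handle dihedral or corner model problems as I descend through the stratification of $\partial K$, and finally reassemble with a partition of unity $\{\chi_F\}$ subordinate to a tubular-neighborhood cover of $\partial K$ indexed by the hyperfaces $F\in\mathcal F^1(K)$. Concretely, I would flatten each hyperface by a rigid motion so that $K$ is locally a half-space, and reconstruct the normal-derivative traces $\psi_j:=\partial^j v/\partial \nu_F^j|_F$ for $j=0,\dots,m-1$ from the Whitney data; by the multinomial theorem each $\psi_j$ is an explicit linear combination (with coefficients in the Cartesian components of $\nu_F$) of $\{g_\alpha : |\alpha|=j\}$. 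The compatibility conditions \eqref{eq:tracecompatiblecond} iteratively identify tangential derivatives of low-order $g_\alpha$ with higher-order $g_{\alpha+e_i}$, which bootstraps each $\psi_j$ from piecewise $H^1$ (or $H^{1/2}$) into the Lions--Magenes space $H^{m-j-1/2}(F)$ required by the classical half-space inverse trace theorem. That theorem then delivers a linear bounded local lift $v_F\in H^m(U_F)$ for each $F\in\mathcal F^1(K)$.

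Second, I would descend to the skeleton. At an edge $e\in\mathcal F^2(K)$ shared by $F_1,F_2\in\mathcal F^1(K)$, the relations \eqref{eq:tracecompatiblecond} restricted to $e$ and contracted against $\nu_{F_1},\nu_{F_2}$ assert precisely that the mixed-normal-derivative jets $\{\partial_{\nu_{F_1}}^i\partial_{\nu_{F_2}}^j v|_e\}$ determined from $F_1$ and from $F_2$ coincide. Hence the Whitney array induces a single $H^m$-jet along $e$, and a dihedral-corner inverse trace theorem gives a common local lift $v_e$ agreeing in trace with $v_{F_1}$ and $v_{F_2}$ on $F_1\cap U_e$ and $F_2\cap U_e$. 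Iterating this observation over strata of increasing codimension produces mutually compatible local lifts, which I would then combine by the partition of unity $\{\chi_F\}$ into a single $v\in H^m(K)$; linearity and boundedness persist because every ingredient---the algebraic reconstruction of $\psi_j$, the Lions--Magenes lift, and the smooth-partition-of-unity recombination---is itself linear and continuous.

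The hard part will be the extension across strata of codimension $\ge 2$, namely along lower-dimensional faces and especially at the vertices of $K$. Near such strata the local geometry is a genuine cone, not a smooth half-space, so the flat Lions--Magenes theorem does not apply directly. Establishing that the Whitney compatibility conditions alone suffice to produce an actual $H^m$ lift across these conical singularities is the technical heart of the arguments in \cite{Verchota1990,Agranovich2007,Agranovich2008} and requires weighted Sobolev estimates on cones (equivalently, a Mellin-type analysis on the link of each stratum). Once this ingredient is in hand, the partition-of-unity assembly and the verification that $\partial^\alpha v|_{\partial K}=g_\alpha$ for all $\alpha\in A_n$ with $|\alpha|\le m-1$ are routine.
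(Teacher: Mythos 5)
The paper itself gives no proof of Lemma~\ref{lem:inversetracethm}: it is quoted as an external result from Verchota's Theorem~R($m$) and Agranovich's theorems, and the text proceeds directly to use it. So there is no internal derivation to compare against; your proposal is an attempt to reconstruct an argument the paper deliberately outsources.

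Your scaffold is conceptually reasonable, and you are explicit about where you are deferring work. The reconstruction of the normal-derivative traces $\psi_j$ from the Whitney array by the multinomial theorem, the bootstrapping via the compatibility conditions~\eqref{eq:tracecompatiblecond} into the appropriate trace spaces, the face-by-face lift by the flat inverse trace theorem, and the partition-of-unity recombination are all in the right spirit. Two caveats, however. First, the mechanism you nominate for the genuinely hard step is not the one used in the cited sources. Verchota's Theorem~R($m$) and Agranovich's theorems are Lipschitz-domain results established via layer potentials, square-function estimates, and the Whitney-array formalism; they do not perform a Mellin/Kondrat'ev weighted-Sobolev analysis on the links of the strata. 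That corner-analysis route (Dauge, Maz'ya--Rossmann) is a legitimate alternative for polyhedral domains, but attributing it to \cite{Verchota1990,Agranovich2007,Agranovich2008} is inaccurate, and if you actually intended to carry out the Mellin argument you would need to supply the weighted estimates yourself rather than cite those references for them. Second, even granting the hard step, the closing ``routine'' partition-of-unity assembly is not quite routine in $H^m$: writing $v=\sum_F\chi_F v_F$ introduces Leibniz corrections involving derivatives of $\chi_F$, so the trace condition $\partial^\alpha v|_{\partial K}=g_\alpha$ is not automatic and typically requires either an iterative correction or a recursive skeleton-by-skeleton construction (building the lift first to match vertex jets, then edge jets, and so on). As written, your proposal should be read as an honest outline that defers the technical core to the same references the paper cites, which is consistent with the paper's own treatment, but it is not a self-contained proof.
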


In the next two lemmas, we will construct a Whitney array for each data in $\mathcal D(\partial K)$.
\begin{lemma}\label{lem:dataglue}
Given data $(d_{n}^{\delta, j}, d_{r}^{F, \alpha})\in\mathcal D(\partial K)$, 
there exist $g_{n}^{\delta,j}\in\mathbb S_n(j)$ and $g_{r}^{F,j}\in H^{m-j}(F;\mathbb S_n(j))$ for any  $\delta\in\mathcal F^n(K)$, $F\in\mathcal F^r(K)$, $r=1,\cdots, n-1$, and $j=0,1,\cdots,m-1$ such that
\begin{enumerate}[(i)]
\item $g_{n}^{\delta,j}=d_{n}^{\delta, j}$ for each $\delta\in\mathcal F^n(K)$ and $j=0,1,\cdots,m-1$;
\item $g_{r}^{F,|\alpha|}:\nu_F^{\alpha}\in V_{k-|\alpha|}^{m-|\alpha|}(F)$ and 
$Q_{k-2m+|\alpha|}^F(g_{r}^{F,|\alpha|}:\nu_F^{\alpha})=d_{r}^{F,\alpha}$ for each $F\in\mathcal F^r(K)$, $\alpha\in A_r$, $|\alpha|\leq m-1$, $r=1,\cdots, n-1$;
\item for each $F\in\mathcal F^r(K)$, $r=1,\cdots, n-1$ and $j=0,1,\cdots,m-1$,
\begin{equation}\label{eq:20210425}
g_{r}^{F,j} = \sum_{\alpha\in A_{r}, \beta\in A_{n-r}\atop |\alpha|+|\beta|=j}\frac{j!}{\alpha!\beta!}\sym(\nu_F^{\alpha}\otimes t_{F}^{\beta})\frac{\partial^{|\beta|}}{\partial t_{F}^{\beta}}\big(g_{r}^{F,|\alpha|}:\nu_F^{\alpha}\big);   
\end{equation}
\item $g_{r}^{F,j}|_e=g_{r+s}^{e,j}$ for each $F\in\mathcal F^r(K)$, $e\in\mathcal F^s(F)$, $r=1,\cdots, n-1$, $s=1,\cdots, n-r$, and $j=0,1,\cdots,m-1$.
\end{enumerate}
\end{lemma}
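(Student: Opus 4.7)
The plan is to construct the tensors by induction on the co-dimension $r$, descending from $r=n$ (vertices) down to $r=1$ (facets). Property (iii) will be \emph{built in} by using \eqref{eq:20210425} as the defining formula for $g_r^{F,j}$ once the scalar normal components $w_{F,\alpha}:=g_r^{F,|\alpha|}:\nu_F^{\alpha}\in V_{k-|\alpha|}^{m-|\alpha|}(F)$ have been produced; hence only (i), (ii) and (iv) require verification. The base case $r=n$ is trivial: at each vertex $\delta$, set $g_n^{\delta,j}:=d_n^{\delta,j}$, which gives (i).

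For the inductive step, assume $g_{r'}^{F',j}$ satisfying (i)--(iv) has been built on every face $F'$ of co-dimension $r'>r$. Fix $F\in\mathcal{F}^r(K)$ and $\alpha\in A_r$ with $|\alpha|\leq m-1$, and define $w_{F,\alpha}\in V_{k-|\alpha|}^{m-|\alpha|}(F)$ by prescribing its face degrees of freedom \eqref{eq:cfmvemfacesdof1}--\eqref{eq:cfmvemfacesdof3} as follows. At each vertex $\delta\in\mathcal{F}^{n-r}(F)$ and each $i\in\{0,1,\ldots,m-|\alpha|-1\}$, formula \eqref{eq:20210413} applied in the orthonormal frame $(\nu_{F,1},\ldots,\nu_{F,r},t_{F,1},\ldots,t_{F,n-r})$ forces $\nabla_F^i w_{F,\alpha}(\delta)$ to equal a specific $(\nu_F,t_F)$-component of $g_n^{\delta,|\alpha|+i}$, which is available from the base step. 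On each sub-face $e\in\mathcal{F}^s(F)$ with $1\leq s\leq n-r-1$ and each $\beta\in A_s$ with $|\beta|\leq m-|\alpha|-1$, declare the moment $Q_{k-2m+|\alpha|+|\beta|}^e(\partial^{|\beta|}w_{F,\alpha}/\partial\nu_{F,e}^{\beta})$ to equal the contraction of the already-built tensor $g_{r+s}^{e,|\alpha|+|\beta|}$ against $\nu_F^{\alpha}\otimes\nu_{F,e}^{\beta}$; by the inductive version of (ii) on $e$ this is a polynomial of the required degree. Finally, set the bulk moment $Q_{k-2m+|\alpha|}^F w_{F,\alpha}:=d_r^{F,\alpha}$. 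Unisolvence of the face virtual element---part of the inductive hypothesis on faces---produces a unique $w_{F,\alpha}$; defining $g_r^{F,j}$ by \eqref{eq:20210425} then gives (iii) immediately, while (ii) follows from the prescribed bulk moment.

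The main technical obstacle is (iv): $g_r^{F,j}|_e=g_{r+s}^{e,j}$ on each sub-face $e\in\mathcal{F}^s(F)$. The key observation is that the normals $\nu_{F,1},\ldots,\nu_{F,r},\nu_{F,e,1},\ldots,\nu_{F,e,s}$ together with the tangent vectors of $e$ form an orthonormal frame of $\mathbb{R}^n$. Re-expanding $g_{r+s}^{e,j}$ in this combined frame via the inductive form of \eqref{eq:20210425} on $e$ and comparing term by term with the restriction of \eqref{eq:20210425} for $g_r^{F,j}$ to $e$, the identity reduces to showing that $\partial^{|\beta|}w_{F,\alpha}/\partial\nu_{F,e}^{\beta}\big|_{e}$ coincides with the corresponding scalar component $w_{e,\gamma}$ of $g_{r+s}^{e,\cdot}$ (where $\gamma$ is the re-indexing of $\alpha+\beta$ relative to the combined normal basis). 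Both quantities lie in $V_{k-|\alpha|-|\beta|}^{m-|\alpha|-|\beta|}(e)$ by the inductive hypothesis (iii) on $e$, and their edge-level degrees of freedom agree by construction: vertex values and lower-dimensional sub-face moments match via the inductive hypothesis (iv) applied on $e$, while bulk moments on $e$ match by the second prescription above. Unisolvence of the edge-level virtual element then yields equality, establishing (iv); (i) is immediate, completing the induction.
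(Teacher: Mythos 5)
Your proposal matches the paper's proof in structure and in all the key ideas: descending induction on co-dimension, defining the scalar normal traces $w_{F,\alpha}$ (the paper's $v_r^{F,\alpha}$) by prescribing the face degrees of freedom \eqref{eq:cfmvemfacesdof1}--\eqref{eq:cfmvemfacesdof3} from the previously built tensors on sub-faces, extending to the full symmetric tensor $g_r^{F,j}$ via the symmetrization formula \eqref{eq:20210425}, and invoking uni-solvence of the face virtual element to force $\partial^{|\beta|}w_{F,\alpha}/\partial\nu_{F,e}^{\beta}|_e$ to agree with the corresponding contraction of $g_{r+s}^{e,\cdot}$, which is exactly how the paper proves (iv) via its identity \eqref{eq:20210415}. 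The only place your write-up is slightly more compressed than the paper is in the claim that (iii) holds \emph{immediately}: one should note that since $\{\nu_{F,i}\}\cup\{t_{F,i}\}$ is an orthonormal frame, contracting the defined $g_r^{F,j}$ against $\nu_F^{\alpha}$ kills all cross terms and returns $w_{F,\alpha}$, so that $g_r^{F,|\alpha|}:\nu_F^{\alpha}=w_{F,\alpha}$ and \eqref{eq:20210425} coincides with the defining formula; this is a short computation, but it is what makes (ii) and (iii) consistent. The paper carries out this check explicitly in the $r=n-1$ base step before stating the general inductive claim, which is worth mirroring in a full write-up.
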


\begin{remark}\rm
We will see in the proof of Lemma~\ref{lem:tidleVkdim} that $g_{r}^{F, j}=\nabla^jv^b|_F$ for some $v^b\in H^m(K)$.
In the colusions of Lemma~\ref{lem:dataglue}, $g_{r}^{F,|\alpha|}:\nu_F^{\alpha}\in V_{k-|\alpha|}^{m-|\alpha|}(F)$ is motivated by $(\nabla^{|\alpha|}v^b)|_F:\nu_F^{\alpha}=\frac{\partial^{|\alpha|}v^b}{\partial \nu_F^{\alpha}}\Big|_F\in V_{k-|\alpha|}^{m-|\alpha|}(F)$ in the definition of $\widetilde{V}_k^m(K)$, $Q_{k-2m+|\alpha|}^F(g_{r}^{F,|\alpha|}:\nu_F^{\alpha})=d_{r}^{F,\alpha}$ motivated by the DoFs \eqref{eq:cfmvemdof2}, and equation \eqref{eq:20210425} motivated by \eqref{eq:20210413}. It follows from \eqref{eq:20210425} that
$$
g_{r}^{F,j}:\sym(\nu_F^{\alpha}\otimes t_{F}^{\beta}) = \frac{\partial^{|\beta|}}{\partial t_{F}^{\beta}}\big(g_{r}^{F,|\alpha|}:\nu_F^{\alpha}\big).
$$
\end{remark}

\noindent\textit{Proof of Lemma~\ref{lem:dataglue}}.
First take $g_{n}^{\delta,j}=d_{n}^{\delta, j}$ for each $\delta\in\mathcal F^n(K)$ and $j=0,1,\cdots,m-1$.
For each $e\in\mathcal F^{n-1}(K)$, $\alpha\in A_{n-1}$ and $|\alpha|\leq m-1$, take $v_{n-1}^{e,\alpha}\in V_{k-|\alpha|}^{m-|\alpha|}(e)$ satisfying
\begin{equation*}
\left\{
\begin{aligned}
&\bigg(\frac{\partial^{j}v_{n-1}^{e,\alpha}}{\partial t_e^{j}}\bigg)(\delta)=g_{n}^{\delta, j+|\alpha|}:\sym(\nu_e^{\alpha}\otimes t_e^j)\quad \forall~\delta\in\mathcal F^1(e), j=0,1,\cdots,m-|\alpha|-1, \\ 
&Q_{k-2m+|\alpha|}^ev_{n-1}^{e,\alpha}=d_{n-1}^{e,\alpha}\quad\text{in
}e. 
\end{aligned}
\right.
\end{equation*}
For $j=0,1,\cdots, m-1$, inspired by \eqref{eq:20210413}, let
$$
g_{n-1}^{e,j} = \sum_{\alpha\in A_{n-1}, |\alpha|\leq j}\frac{j!}{\alpha!(j-|\alpha|)!}\sym(\nu_e^{\alpha}\otimes t_e^{j-|\alpha|})\bigg(\frac{\partial^{j-|\alpha|}v_{n-1}^{e,\alpha}}{\partial t_e^{j-|\alpha|}}\bigg).
$$ 
Then we have for any $\delta\in\mathcal F^1(e)$ that
\begin{align*}
g_{n-1}^{e,j}(\delta)=&\sum_{\alpha\in A_{n-1}, |\alpha|\leq j}\frac{j!}{\alpha!(j-|\alpha|)!}\sym(\nu_e^{\alpha}\otimes t_e^{j-|\alpha|})\big(g_{n}^{\delta, j}:\sym(\nu_e^{\alpha}\otimes t_e^{j-|\alpha|})\big) \\
=&\,g_{n}^{\delta, j}.
\end{align*}
And it follows
\begin{align*}
g_{n-1}^{e,|\alpha|}:\nu_e^{\alpha}=& \sum_{\beta\in A_{n-1}, |\beta|\leq |\alpha|}\frac{|\alpha|!}{\beta!(|\alpha|-|\beta|)!}\sym(\nu_e^{\beta}\otimes t_e^{|\alpha|-|\beta|})\bigg(\frac{\partial^{|\alpha|-|\beta|}v_{n-1}^{e,\beta}}{\partial t_e^{|\alpha|-|\beta|}}\bigg):\nu_e^{\alpha}\\
=&\, v_{n-1}^{e,\alpha}\in V_{k-|\alpha|}^{m-|\alpha|}(e)\qquad\qquad\forall~\alpha\in A_{n-1}, |\alpha|\leq m-1.   
\end{align*}
In turn we have
$$
Q_{k-2m+|\alpha|}^e(g_{n-1}^{e,|\alpha|}:\nu_e^{\alpha})=d_{n-1}^{e,\alpha},
$$
$$
g_{n-1}^{e,j} = \sum_{\alpha\in A_{n-1}, |\alpha|\leq j}\frac{j!}{\alpha!(j-|\alpha|)!}\sym(\nu_e^{\alpha}\otimes t_e^{j-|\alpha|})\frac{\partial^{j-|\alpha|}}{\partial t_e^{j-|\alpha|}}\big(g_{n-1}^{e,|\alpha|}:\nu_e^{\alpha}\big).
$$ 

Assume we have found $g_{s}^{e,j}\in H^{m-j}(e;\mathbb S_n(j))$ for any $e\in\mathcal F^s(K)$, $s=r+1,\cdots, n-1$, and $j=0,1,\cdots,m-1$ for $1\leq r\leq n-2$ satisfying
\begin{itemize}
\item $g_{s}^{e,|\alpha|}:\nu_e^{\alpha}\in V_{k-|\alpha|}^{m-|\alpha|}(e)$ and 
$Q_{k-2m+|\alpha|}^e(g_{s}^{e,|\alpha|}:\nu_e^{\alpha})=d_{s}^{e,\alpha}$ for each $e\in\mathcal F^s(K)$, $\alpha\in A_s$, $|\alpha|\leq m-1$, $s=r+1,\cdots, n-1$;
\item for each $e\in\mathcal F^s(K)$, $s=r+1,\cdots, n-1$ and $j=0,1,\cdots,m-1$,
$$
g_{s}^{e,j} = \sum_{\alpha\in A_{s}, \beta\in A_{n-s}\atop |\alpha|+|\beta|=j}\frac{j!}{\alpha!\beta!}\sym(\nu_e^{\alpha}\otimes t_{e}^{\beta})\frac{\partial^{|\beta|}}{\partial t_{e}^{\beta}}\big(g_{s}^{e,|\alpha|}:\nu_e^{\alpha}\big);
$$\item $g_{s}^{e,j}|_{e'}=g_{s+s'}^{e',j}$ for each $e\in\mathcal F^s(K)$, $e'\in\mathcal F^{s'}(e)$, $s=r+1,\cdots, n-1$, $s'=1,\cdots, n-s$.
\end{itemize}
Now consider the construction of $g_{r}^{F,j}$ for each $F\in\mathcal F^{r}(K)$ and $j=1,\cdots,m-1$.
To this end, for any $\alpha\in A_{r}$ and $|\alpha|\leq m-1$, by \eqref{eq:cfmvemfacesdof1}-\eqref{eq:cfmvemfacesdof3} let $v_{r}^{F,\alpha}\in V_{k-|\alpha|}^{m-|\alpha|}(F)$ be determined by
\begin{equation*}
\left\{
\begin{aligned}
&\bigg(\frac{\partial^{|\beta|}v_{r}^{F,\alpha}}{\partial\nu_{F,\delta}^{\beta}}\bigg)(\delta)=g_{n}^{\delta, |\beta|+|\alpha|}:\sym(\nu_F^{\alpha}\otimes\nu_{F,\delta}^{\beta})\quad \forall~\delta\in\mathcal F^{n-r}(F), \beta\in A_{n-r}, \\
&\qquad\qquad\qquad\qquad\qquad\qquad\qquad\qquad\qquad\qquad\qquad|\beta|\leq m-|\alpha|-1, \\
&Q_{k-2m+|\alpha|+|\beta|}^e\bigg(\frac{\partial^{|\beta|}v_{r}^{F,\alpha}}{\partial\nu_{F,e}^{\beta}}-g_{r+s}^{e, |\beta|+|\alpha|}:\sym(\nu_F^{\alpha}\otimes\nu_{F,e}^{\beta})\bigg) = 0\quad \forall~e\in\mathcal F^{s}(F),  \\
&\qquad\qquad\qquad\qquad\qquad\qquad s=1,\cdots,n-r-1,\beta\in A_{s},  |\beta|\leq m-|\alpha|-1, \\
&Q_{k-2m+|\alpha|}^Fv_{r}^{F,\alpha}=d_{r}^{F,\alpha}\quad\text{in
}F. 
\end{aligned}
\right.
\end{equation*}
Noting that $\frac{\partial^{|\beta|}v_{r}^{F,\alpha}}{\partial\nu_{F,e}^{\beta}}, g_{r+s}^{e, |\beta|+|\alpha|}:\sym(\nu_F^{\alpha}\otimes\nu_{F,e}^{\beta}) \in V_{k-|\alpha|-|\beta|}^{m-|\alpha|-|\beta|}(e)$ for each $e\in\mathcal F^{s}(F)$, and they share the same values of the DoFs, it follows 
\begin{equation}\label{eq:20210415}
\frac{\partial^{|\beta|}v_{r}^{F,\alpha}}{\partial\nu_{F,e}^{\beta}}=g_{r+s}^{e, |\beta|+|\alpha|}:\sym(\nu_F^{\alpha}\otimes\nu_{F,e}^{\beta}) \quad\forall~e\in\mathcal F^{s}(F).
\end{equation}
For $j=0,1,\cdots, m-1$, let
\begin{equation}\label{eq:20210601}
g_{r}^{F,j} = \sum_{\alpha\in A_{r}, \beta\in A_{n-r}\atop |\alpha|+|\beta|=j}\frac{j!}{\alpha!\beta!}\sym(\nu_F^{\alpha}\otimes t_{F}^{\beta})\bigg(\frac{\partial^{|\beta|}v_{r}^{F,\alpha}}{\partial t_{F}^{\beta}}\bigg).
\end{equation}
Then
$$
g_{r}^{F,|\alpha|}:\nu_F^{\alpha}=v_{r}^{F,\alpha}\in V_{k-|\alpha|}^{m-|\alpha|}(F),
$$
which yields
$$
Q_{k-2m+|\alpha|}^F(g_{r}^{F,|\alpha|}:\nu_F^{\alpha})=d_{r}^{F,\alpha},
$$
$$
g_{r}^{F,j} = \sum_{\alpha\in A_{r}, \beta\in A_{n-r}\atop |\alpha|+|\beta|=j}\frac{j!}{\alpha!\beta!}\sym(\nu_F^{\alpha}\otimes t_{F}^{\beta})\frac{\partial^{|\beta|}}{\partial t_{F}^{\beta}}\big(g_{r}^{F,|\alpha|}:\nu_F^{\alpha}\big).
$$
For each $e\in\mathcal F^{s}(F)$ with $s=1,\cdots,n-r$, it follows from \eqref{eq:20210415} that
\begin{align*}
g_{r}^{F,j}|_e& = \sum_{\alpha\in A_{r}, \beta\in A_{s}, \gamma\in A_{n-r-s}\atop |\alpha|+|\beta|+|\gamma|=j}\frac{j!}{\alpha!\beta!\gamma!}\sym(\nu_F^{\alpha}\otimes \nu_{F,e}^{\beta}\otimes t_{e}^{\gamma})\bigg(\frac{\partial^{|\beta|+|\gamma|}v_{r}^{F,\alpha}}{\partial t_{e}^{\gamma}\partial\nu_{F,e}^{\beta}}\bigg)\bigg|_e \\
& = \sum_{\alpha\in A_{r}, \beta\in A_{s}, \gamma\in A_{n-r-s}\atop |\alpha|+|\beta|+|\gamma|=j}\frac{j!}{\alpha!\beta!\gamma!}\sym(\nu_F^{\alpha}\otimes \nu_{F,e}^{\beta}\otimes t_{e}^{\gamma})\frac{\partial^{|\gamma|}}{\partial t_{e}^{\gamma}}\big(g_{r+s}^{e, |\alpha|+|\beta|}:(\nu_F^{\alpha}\otimes\nu_{F,e}^{\beta})\big) \\
& = \sum_{\alpha\in A_{r+s}, \gamma\in A_{n-r-s}\atop |\alpha|+|\gamma|=j}\frac{j!}{\alpha!\gamma!}\sym(\nu_e^{\alpha}\otimes t_{e}^{\gamma})\frac{\partial^{|\gamma|}}{\partial t_{e}^{\gamma}}\big(g_{r+s}^{e, |\alpha|}:\nu_e^{\alpha}\big) \\
&=g_{r+s}^{e,j}.
\end{align*}
Finally we finish the proof by the mathematical induction.
$\hfill\Box$

\begin{lemma}\label{lem:datawhitneyarray}
Given data $(d_{n}^{\delta, j}, d_{r}^{F, \alpha})\in\mathcal D(\partial K)$, 
let $g_{1}^{F,j}\in H^{m-j}(F;\mathbb S_n(j))$ for any $F\in\mathcal F^1(K)$ and $j=0,1,\cdots,m-1$ be defined in Lemma~\ref{lem:dataglue}. For each $\alpha\in A_n, |\alpha|\leq m-1$, define $g_{\alpha}\in L^2(\partial K)$ by 
$g_{\alpha}|_F=g_{1}^{F,|\alpha|}:e^{\alpha}$ for each $F\in\mathcal F^1(K)$.
Then $\{g_{\alpha}\}_{\alpha\in A_n, |\alpha|\leq m-1}\in \textrm{WA}(\partial K)$.
\end{lemma}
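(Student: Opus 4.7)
The plan is to verify the two defining properties of a Whitney array in turn: the correct Sobolev regularity of each $g_\alpha$ on $\partial K$, and the compatibility conditions \eqref{eq:tracecompatiblecond}. The first follows almost directly from Lemma~\ref{lem:dataglue}; the main technical work lies in establishing the second.

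For the regularity part, I would first observe that by Lemma~\ref{lem:dataglue}, each tensor field $g_1^{F,|\alpha|}$ lies in $H^{m-|\alpha|}(F;\mathbb{S}_n(|\alpha|))$, so contracting with the constant tensor $e^\alpha$ yields $g_\alpha|_F \in H^{m-|\alpha|}(F)$. In particular $g_\alpha|_F \in H^1(F)$ whenever $|\alpha|\leq m-1$. Next, I would use property (iv) of Lemma~\ref{lem:dataglue}: on any codimension-two face $e$ shared by two hyperfaces $F, F'\in\mathcal F^1(K)$, one has $g_1^{F,|\alpha|}|_e = g_2^{e,|\alpha|} = g_1^{F',|\alpha|}|_e$, and contracting with $e^\alpha$ shows $g_\alpha|_F$ and $g_\alpha|_{F'}$ agree on $e$. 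Standard gluing on Lipschitz polytopal boundaries then yields $g_\alpha\in H^1(\partial K)$ when $|\alpha|\leq m-2$ and $g_\alpha\in H^{1/2}(\partial K)$ when $|\alpha|=m-1$.

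The core of the argument is verifying \eqref{eq:tracecompatiblecond} on each $F\in\mathcal F^1(K)$. A useful preliminary observation is that the operator $(\nu_F)_j\partial_i - (\nu_F)_i\partial_j$ is tangential to $F$: decomposing $\partial_i = (\nu_F)_i\partial_{\nu_F}+\partial_i^T$ with $\partial_i^T$ tangential, the normal pieces cancel, so the left-hand side of \eqref{eq:tracecompatiblecond} reduces to $(\nu_F)_j\partial_i^T g_\alpha - (\nu_F)_i\partial_j^T g_\alpha$, which only involves values of $g_\alpha$ on $F$. Using $g_\alpha|_F = g_1^{F,|\alpha|}:e^\alpha$ together with the representation \eqref{eq:20210425} for $r=1$, I would then establish the key tensor identity
\[
\partial_i^T g_\alpha = \Bigl(e^{\alpha+e_i} - (\nu_F)_i\sum_{k=1}^n (\nu_F)_k\, e^{\alpha+e_k}\Bigr):g_1^{F,|\alpha|+1}\quad\text{on } F.
\]
Granted this, the left-hand side of \eqref{eq:tracecompatiblecond} becomes $\bigl((\nu_F)_j e^{\alpha+e_i} - (\nu_F)_i e^{\alpha+e_j}\bigr):g_1^{F,|\alpha|+1}$ after the $(\nu_F)_i(\nu_F)_j$ contributions cancel, which matches the right-hand side upon expanding $g_{\alpha+e_i}|_F = e^{\alpha+e_i}:g_1^{F,|\alpha|+1}$.

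The main obstacle will be proving the displayed identity. From \eqref{eq:20210425}, $g_1^{F,|\alpha|+1}$ is expressed as a sum over pairs $(p,\beta)$ with $p+|\beta|=|\alpha|+1$ of symmetrized products $\sym(\nu_F^p\otimes t_F^\beta)$ multiplied by tangential derivatives of the normal trace components $g_1^{F,p}:\nu_F^p$, and one must verify that its contraction with the tensor $e^{\alpha+e_i}-(\nu_F)_i\sum_k(\nu_F)_k e^{\alpha+e_k}$ reproduces $\partial_i^T(e^\alpha:g_1^{F,|\alpha|})$. The tensor bookkeeping, combining the decomposition of each $e_l$ into normal and tangential components with the symmetrization $\sym(\nu_F^p\otimes t_F^\beta)$ and the commutation of $\partial_i^T$ with those tangential derivatives, is the crux of the calculation.
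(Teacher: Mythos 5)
Your proposal follows essentially the same route as the paper: decompose $\partial_i$ into normal and tangential parts so the normal contributions cancel on the left of \eqref{eq:tracecompatiblecond}, identify the tangential derivative of $g_\alpha$ with a contraction of the higher-order boundary tensor $g_1^{F,|\alpha|+1}$, and then match with the right-hand side. Your displayed identity for $\partial_i^T g_\alpha$ is correct and is in fact a one-line consequence of the identity the paper actually proves, namely $g_1^{F,|\alpha|+1}:(e^\alpha\otimes t_{F,\ell})=\partial g_\alpha/\partial t_{F,\ell}$: using $\sum_{\ell}(t_{F,\ell})_i\, t_{F,\ell}=e_i-(\nu_F)_i\,\nu_F$, $\nu_F=\sum_k(\nu_F)_k e_k$, and the symmetry of $g_1^{F,|\alpha|+1}$ (so contraction with $e^\alpha\otimes e_k$ equals contraction with $e^{\alpha+e_k}$), one passes directly from the paper's identity to yours. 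The only real gap is that you explicitly defer the tensor bookkeeping that establishes the single-$t_{F,\ell}$ contraction identity; this is precisely the computation the paper carries out by writing $g_1^{F,|\alpha|+1}$ and $g_1^{F,|\alpha|}$ via the representation in \eqref{eq:20210601} in terms of the functions $v_r^{F,\cdot}$, relabeling $\beta\to\beta+e_\ell$, and cancelling the factorials. So the idea is not missing; it is the calculation that remains to be written out, and it is the heart of the argument.

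One small sharpening on the regularity half: the paper's proof simply cites properties (ii) and (iv) of Lemma~\ref{lem:dataglue} to get $g_\alpha\in H^1(\partial K)$ for all $|\alpha|\le m-1$, without separately invoking $H^{1/2}(\partial K)$ for $|\alpha|=m-1$; since $H^1(\partial K)\subset H^{1/2}(\partial K)$ this is harmless, but your phrasing suggests a distinction that is not actually needed here.
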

\begin{proof}
By (ii) and (iv) in Lemma~\ref{lem:dataglue}, we have $g_{\alpha}|_F\in H^{m-|\alpha|}(F)$ and $g_{\alpha}\in H^1(\partial K)$ for each $F\in\mathcal F^1(K)$, and $\alpha\in A_n, |\alpha|\leq m-1$.

Next we check the compatibility conditions in \eqref{eq:tracecompatiblecond}. Noting that 
$$
\partial_ig_{\alpha}=(\nu_F)_i\frac{\partial g_{\alpha}}{\partial \nu_F} +\sum_{\ell=1}^{n-1}(t_{F,\ell})_i\frac{\partial g_{\alpha}}{\partial t_{F,\ell}},\quad \partial_jg_{\alpha}=(\nu_F)_j\frac{\partial g_{\alpha}}{\partial \nu_F} +\sum_{\ell=1}^{n-1}(t_{F,\ell})_j\frac{\partial g_{\alpha}}{\partial t_{F,\ell}},
$$
we get 
\begin{equation}\label{eq:20210602}
(\nu_F)_j\partial_ig_{\alpha}-(\nu_F)_i\partial_jg_{\alpha}=\sum_{\ell=1}^{n-1}\big((\nu_F)_j(t_{F,\ell})_i-(\nu_F)_i(t_{F,\ell})_j\big)\frac{\partial g_{\alpha}}{\partial t_{F,\ell}}.  
\end{equation}
On the other side, it follows from \eqref{eq:20210601} that
$$
g_{1}^{F,|\alpha|+1} = \sum_{\beta\in A_{n-1}\atop |\beta|\leq |\alpha|+1}\frac{(|\alpha|+1)!}{(|\alpha|+1-|\beta|)!\beta!}\sym(\nu_F^{|\alpha|+1-|\beta|}\otimes t_{F}^{\beta})\frac{\partial^{|\beta|}v_r^{F,|\alpha|+1-|\beta|}}{\partial t_{F}^{\beta}},
$$
$$
g_{1}^{F,|\alpha|} = \sum_{\beta\in A_{n-1}\atop |\beta|\leq |\alpha|}\frac{|\alpha|!}{(|\alpha|-|\beta|)!\beta!}\sym(\nu_F^{|\alpha|-|\beta|}\otimes t_{F}^{\beta})\frac{\partial^{|\beta|}v_r^{F,|\alpha|-|\beta|}}{\partial t_{F}^{\beta}}.
$$
Hence for $\ell=1,\cdots, n-1$, we get
$$
\resizebox{\textwidth}{!}{$
\begin{aligned}
&\quad\; g_{1}^{F,|\alpha|+1}:(e^{\alpha}\otimes t_{F,\ell}) \\
&=\sum_{\beta\in A_{n-1}\atop |\beta|\leq |\alpha|+1}\frac{(|\alpha|+1)!}{(|\alpha|+1-|\beta|)!\beta!}\sym(\nu_F^{|\alpha|+1-|\beta|}\otimes t_{F}^{\beta}):(e^{\alpha}\otimes t_{F,\ell})\frac{\partial^{|\beta|}v_r^{F,|\alpha|+1-|\beta|}}{\partial t_{F}^{\beta}} \\
&=\sum_{\beta\in A_{n-1}\atop |\beta|\leq |\alpha|}\frac{(|\alpha|+1)!}{(|\alpha|-|\beta|)!\beta!(\beta_{\ell}+1)}\sym(\nu_F^{|\alpha|-|\beta|}\otimes t_{F}^{\beta}\otimes t_{F,\ell}):(e^{\alpha}\otimes t_{F,\ell})\frac{\partial^{|\beta|+1}v_r^{F,|\alpha|-|\beta|}}{\partial t_{F,\ell}\partial t_{F}^{\beta}} \\
&=\sum_{\beta\in A_{n-1}\atop |\beta|\leq |\alpha|}\frac{|\alpha|!}{(|\alpha|-|\beta|)!\beta!}\sym(\nu_F^{|\alpha|-|\beta|}\otimes t_{F}^{\beta}):e^{\alpha}\frac{\partial^{|\beta|+1}v_r^{F,|\alpha|-|\beta|}}{\partial t_{F,\ell}\partial t_{F}^{\beta}} \\
&= \frac{\partial(g_{1}^{F,|\alpha|}:e^{\alpha})}{\partial t_{F,\ell}}= \frac{\partial g_{\alpha}}{\partial t_{F,\ell}}.
\end{aligned}
$}
$$
By $e_i=(\nu_F)_i\nu_F+\sum\limits_{\ell=1}^{n-1}(t_{F,\ell})_it_{F,\ell}$, we get
\begin{align*}
g_{\alpha+e_i}&=g_{1}^{F,|\alpha|+1}:(e^{\alpha}\otimes e_i) \\
&=(\nu_F)_ig_{1}^{F,|\alpha|+1}:(e^{\alpha}\otimes\nu_F) + \sum_{\ell=1}^{n-1}(t_{F,\ell})_ig_{1}^{F,|\alpha|+1}:(e^{\alpha}\otimes t_{F,\ell}).
\end{align*}
Thus it holds
\begin{align*}
(\nu_F)_jg_{\alpha+e_i}-(\nu_F)_ig_{\alpha+e_j}&=\sum_{\ell=1}^{n-1}\big((\nu_F)_j(t_{F,\ell})_i-(\nu_F)_i(t_{F,\ell})_j\big)g_{1}^{F,|\alpha|+1}:(e^{\alpha}\otimes t_{F,\ell}) \\
&=\sum_{\ell=1}^{n-1}\big((\nu_F)_j(t_{F,\ell})_i-(\nu_F)_i(t_{F,\ell})_j\big)\frac{\partial g_{\alpha}}{\partial t_{F,\ell}}.
\end{align*}
Therefore we conlude the compatibility conditions in \eqref{eq:tracecompatiblecond} from \eqref{eq:20210602}.
\end{proof}

\subsection{Uni-solvence of virtual elements}\label{subsec:vemunisolvence}
With previous preparations, we will prove the uni-solvence of the $H^m$-conforming virtual elements $(K, \mathcal N_k^m(K), V_k^m(K))$ in arbitrary dimension in this subsection.
\begin{lemma}\label{lem:tidleVkdim}
The mapping $\widetilde{\mathcal D}_K: \widetilde{V}_k^m(K)\to\widetilde{\mathcal D}(K)$ is onto. Consequently
$$
\dim \widetilde{V}_k^m(K)\geq\dim \widetilde{\mathcal D}(K). 
$$
\end{lemma}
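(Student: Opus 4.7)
The plan is to split the task into a boundary-matching construction and an interior correction. Given data $(d_n^{\delta,j}, d_r^{F,\alpha}, d_0) \in \widetilde{\mathcal D}(K)$, I first build a function $v^b\in H^m(K)$ whose normal-derivative traces up to order $m-1$ encode precisely the boundary tuple $(d_n^{\delta,j}, d_r^{F,\alpha})\in \mathcal D(\partial K)$ in the recursive virtual-element sense; then I add a correction $w\in H_0^m(K)$ to match the interior polynomial $d_0$, and set $v:=v^b+w$.

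The boundary construction proceeds as follows. Starting from $(d_n^{\delta,j}, d_r^{F,\alpha})$, Lemma~\ref{lem:dataglue} produces compatible tensor fields $g_r^{F,j}$ on faces of every codimension; Lemma~\ref{lem:datawhitneyarray} packages the codim-$1$ data $g_1^{F,j}$ into a Whitney array $\{g_\alpha\}_{\alpha\in A_n,\,|\alpha|\leq m-1}\in\textrm{WA}(\partial K)$. The inverse trace theorem (Lemma~\ref{lem:inversetracethm}) then supplies a function $v^b\in H^m(K)$ with $\partial^\alpha v^b|_{\partial K} = g_\alpha$ for all such $\alpha$. Reassembling $\nabla^j v^b|_F$ from $\{g_\alpha\}$ on each $F\in\mathcal F^1(K)$ via an identity analogous to \eqref{eq:20210413} gives $\nabla^j v^b|_F = g_1^{F,j}$; iterating the restriction and invoking Lemma~\ref{lem:dataglue}(iv), we deduce $\nabla^j v^b|_F = g_r^{F,j}$ for every $F\in\mathcal F^r(K)$ and $j=0,\dots,m-1$. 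The compatibility of the $g_r^{F,j}$ across intersections of faces yields $(\nabla^j v^b)|_{\mathcal S_K^r}\in H^1(\mathcal S_K^r;\mathbb S_n(j))$, while property~(ii) of Lemma~\ref{lem:dataglue} gives $\frac{\partial^{|\alpha|}v^b}{\partial\nu_F^\alpha}\big|_F = g_r^{F,|\alpha|}:\nu_F^\alpha \in V_{k-|\alpha|}^{m-|\alpha|}(F)$ together with $Q_{k-2m+|\alpha|}^F \frac{\partial^{|\alpha|}v^b}{\partial\nu_F^\alpha} = d_r^{F,\alpha}$; evaluation at vertices recovers $\nabla^j v^b(\delta) = g_n^{\delta,j} = d_n^{\delta,j}$.

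For the interior correction, consider the polyharmonic Dirichlet problem: find $w\in H_0^m(K)$ with $(-\Delta)^m w = d_0 - (-\Delta)^m v^b$ in the weak sense. This is well-posed by Lax--Milgram on $H_0^m(K)$ equipped with the norm $|\cdot|_{m,K}$, so such a $w$ exists in $H^m(K)$. Setting $v:=v^b+w$, all the boundary identities above are preserved because every trace of $w$ up to order $m-1$ vanishes, and $(-\Delta)^m v = d_0\in\mathbb P_k(K)$. Each defining clause of $\widetilde V_k^m(K)$ is therefore verified, so $v\in\widetilde V_k^m(K)$ and $\widetilde{\mathcal D}_K v = (d_n^{\delta,j}, d_r^{F,\alpha}, d_0)$. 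Surjectivity then yields $\dim \widetilde V_k^m(K) \geq \dim \widetilde{\mathcal D}(K)$.

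The main obstacle will be the trace-matching step on higher-codimension faces: Lemma~\ref{lem:inversetracethm} only directly controls traces on codim-$1$ faces, so one must chain down through the skeletons $\mathcal S_K^s$, using the $H^1$-regularity across adjacent faces built into the Whitney array, to guarantee that the traces $\nabla^j v^b|_F$ on every $F\in\mathcal F^r(K)$ agree with the precomputed $g_r^{F,j}$. Lemma~\ref{lem:dataglue}(iii)--(iv) is exactly what makes this chaining consistent.
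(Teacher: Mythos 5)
Your proposal is correct and takes essentially the same approach as the paper's own proof: build $v^b\in H^m(K)$ from the boundary tuple via Lemmas~\ref{lem:dataglue}, \ref{lem:datawhitneyarray}, and \ref{lem:inversetracethm}, then correct with a polyharmonic Dirichlet solve $v^0\in H_0^m(K)$ and verify membership in $\widetilde V_k^m(K)$ clause by clause. The ``obstacle'' you flag at the end (chaining trace-matching through higher-codimension skeletons) is exactly what Lemma~\ref{lem:dataglue}(iv) was built to supply, and the paper uses it in just the way you anticipate.
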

\begin{proof}
Take any data $(d_{n}^{\delta, j}, d_{r}^{F, \alpha}, d_0)\in\widetilde{\mathcal D}(K)$.
Due to $(d_{n}^{\delta, j}, d_{r}^{F, \alpha})\in\mathcal D(\partial K)$,
let $\{g_{\alpha}\}_{\alpha\in A_n, |\alpha|\leq m-1}\in \textrm{WA}(\partial K)$
be defined in Lemma~\ref{lem:datawhitneyarray}.
By Lemma~\ref{lem:inversetracethm},
there exists $v^b\in H^m(K)$ such that
$$
\partial^{\alpha}v^b|_{\partial K}=g_{\alpha}\quad\forall~\alpha\in A_n, |\alpha|\leq m-1.
$$
Then 
$$
\partial^{\alpha}v^b|_F=g_{1}^{F,|\alpha|}:e^{\alpha}\quad\forall~F\in\mathcal F^1(K), \alpha\in A_n, |\alpha|\leq m-1,
$$
which implies
$$
\nabla^jv^b|_F=g_{1}^{F, j}\quad\forall~F\in\mathcal F^1(K), j=1,\cdots, m-1.
$$
And we get from (iv) in Lemma~\ref{lem:dataglue} that
$$
\nabla^jv^b|_F=g_{r}^{F, j}\quad\forall~F\in\mathcal F^r(K), r=1,\cdots, n,\; j=1,\cdots, m-1.
$$
On the other side,
there exists unique $v^0\in H_0^m(K)$ determined by 
$$
(-\Delta)^mv^0=d_0-(-\Delta)^mv^b.
$$
Take $v=v^0+v^b\in H^m(K)$.
We have $(-\Delta)^mv=d_0\in\mathbb P_{k}(K)$, and
\begin{equation}\label{eq:20210527-1}  
\nabla^jv|_F=\nabla^jv^b|_F=g_{r}^{F, j}\quad\forall~F\in\mathcal F^r(K), r=1,\cdots, n, \; j=1,\cdots, m-1.
\end{equation}
It follows from the last identity and (iv) in Lemma~\ref{lem:dataglue} that
$(\nabla^jv)|_{\mathcal S_K^r}\in H^1(\mathcal S_K^r;\mathbb S_{n}(j))$ for $r=1,\cdots, n-1$, $j=0,\cdots,m-1$.
And thanks to (ii) in Lemma~\ref{lem:dataglue},
\begin{equation}\label{eq:20210527-2}  
\frac{\partial^{|\alpha|}v}{\partial \nu_F^{\alpha}}\Big|_F=\frac{\partial^{|\alpha|}v^b}{\partial \nu_F^{\alpha}}\Big|_F=g_{r}^{F, |\alpha|}:\nu_F^{\alpha}\in V_{k-|\alpha|}^{m-|\alpha|}(F)
\end{equation}
for any $F\in\mathcal F^{r}(K)$, $r=1,\cdots, n-1$, $\alpha\in A_r$,  and  $|\alpha|\leq m-1$.
Thus
$v\in \widetilde{V}_k^m(K)$. And it follows from \eqref{eq:20210527-1}-\eqref{eq:20210527-2}, (i) and (ii) in Lemma~\ref{lem:dataglue} that $\widetilde{\mathcal D}_Kv=(d_{n}^{\delta, j}, d_{r}^{F, \alpha}, d_0)$. 
\end{proof}

\begin{lemma}\label{lem:tildevemunisolvence}
The following DoFs $\widetilde{\mathcal N}_k^m(K)$
\begin{align*}
h_K^j\nabla^{j}v(\delta) & \quad\forall~\delta\in\mathcal F^{n}(K), \;j=0,1,\cdots,m-1, \\
\frac{h_K^{|\alpha|}}{|F|}(\frac{\partial^{|\alpha|}v}{\partial \nu_F^{\alpha}}, q)_F & \quad\forall~q\in\mathbb P_{k-2m+|\alpha|}(F), F\in\mathcal F^{r}(K), \;r=1,\cdots,n-1, \\
&\quad \quad \alpha\in A_r, \textrm{ and } |\alpha|\leq m-1, \\
\frac{1}{|K|}(v, q)_K & \quad\forall~q\in\mathbb P_{k}(K) 
\end{align*}
are uni-solvent for the local virtual element space $\widetilde{V}_k^m(K)$.
Consequently the mapping $\widetilde{\mathcal D}_K: \widetilde{V}_k^m(K)\to\widetilde{\mathcal D}(K)$ is bijective.
\end{lemma}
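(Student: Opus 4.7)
My plan is to prove uni-solvence in two steps—injectivity of the evaluation map $v\mapsto\widetilde{\mathcal N}_k^m(K)(v)$ on $\widetilde V_k^m(K)$, then a dimension comparison against $\widetilde{\mathcal D}(K)$—and to read off bijectivity of $\widetilde{\mathcal D}_K$ as a corollary. The injectivity argument will proceed by induction on the spatial dimension $n$; the base case $n=1$ is the classical Hermite uni-solvence on an interval, since $\widetilde V_k^m(K)$ is then just a polynomial space and the DOFs \eqref{eq:cfmvemdof1} and \eqref{eq:cfmvemdof3} are the standard Hermite endpoint data plus interior $L^2$ moments up to degree $k$.

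For the inductive step, suppose $v\in\widetilde V_k^m(K)$ annihilates every functional in $\widetilde{\mathcal N}_k^m(K)$. For each $F\in\mathcal F^r(K)$ and $\alpha\in A_r$ with $|\alpha|\leq m-1$, set $w_F^\alpha:=\frac{\partial^{|\alpha|}v}{\partial\nu_F^\alpha}\big|_F\in V_{k-|\alpha|}^{m-|\alpha|}(F)$, which is well defined by the definition of $\widetilde V_k^m(K)$. I would then verify that every DOF of $\mathcal N_{k-|\alpha|}^{m-|\alpha|}(F)$ evaluated at $w_F^\alpha$ is forced to vanish by the hypothesis on $v$: the face-vertex DOFs $h_F^j\nabla_F^j w_F^\alpha(\delta)$ are tangential components of $\nabla^{j+|\alpha|}v(\delta)$ and vanish for $j\leq m-|\alpha|-1$; the sub-face moments on $e\in\mathcal F^s(F)$ involve the mixed-normal contractions $\nabla^{|\alpha|+|\beta|}v:\sym(\nu_F^\alpha\otimes\nu_{F,e}^\beta)$, and because the combined frame $(\nu_F,\nu_{F,e})$ spans the full normal space to $e\in\mathcal F^{r+s}(K)$ with matching polynomial order $k-|\alpha|-2(m-|\alpha|)+|\beta|=k-2m+|\alpha|+|\beta|$, these moments lie among the functionals \eqref{eq:cfmvemdof2} and are zero; and the interior $F$-moment of $w_F^\alpha$ is itself one of the functionals \eqref{eq:cfmvemdof2}. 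The inductive uni-solvence on $F$ then yields $w_F^\alpha=0$, so $\partial^\alpha v|_{\partial K}=0$ for all $|\alpha|\leq m-1$ and $v\in H_0^m(K)$.

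Invoking the Green's identity \eqref{eq:greenidentity} with $v\in H_0^m(K)$ kills every boundary term and gives $((-\Delta)^m v,v)_K=\|\nabla^m v\|_{0,K}^2$. Since $(-\Delta)^m v\in\mathbb P_k(K)$ and the third block of DOFs enforces $(v,q)_K=0$ for every $q\in\mathbb P_k(K)$, the test $q=(-\Delta)^m v$ yields $|v|_{m,K}=0$, whence $v\equiv 0$ by iterated Poincar\'e--Friedrichs on $H_0^m(K)$. This establishes injectivity and delivers $\dim\widetilde V_k^m(K)\leq\#\widetilde{\mathcal N}_k^m(K)=\dim\widetilde{\mathcal D}(K)$; combined with Lemma~\ref{lem:tidleVkdim} this is an equality, and $\widetilde{\mathcal D}_K$ is a surjection between spaces of equal finite dimension, hence a bijection.

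The main obstacle I anticipate is the bookkeeping in the face-DOF reduction of the inductive step: correctly identifying the normal/tangential frame on a sub-face $e$ as viewed inside $F$ versus inside $K$, verifying the polynomial-degree accounting on $e$, and re-expressing the symmetrized tensor products of $(\nu_F^\alpha,\nu_{F,e}^\beta)$ in the form used by $\mathcal N_{k-|\alpha|}^{m-|\alpha|}(F)$. This is a clean combinatorial calculation but needs to be carried out patiently, mirroring the gluing construction in Lemma~\ref{lem:dataglue}.
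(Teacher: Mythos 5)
Your proposal is correct and follows essentially the same route as the paper: use the surjectivity of $\widetilde{\mathcal D}_K$ from Lemma~\ref{lem:tidleVkdim} for the dimension lower bound, show that vanishing DOFs force $v\in H_0^m(K)$ by descending to faces (the paper compresses this into "by the recursive definition of $\widetilde V_k^m(K)$"), and then conclude $v=0$ from Green's identity together with the vanishing interior moments against $(-\Delta)^m v\in\mathbb P_k(K)$. The only place to be careful in your write-up is the base of the recursion: what is needed on edges is uni-solvence of the one-dimensional $(e,\mathcal N_{k'}^{m'}(e),V_{k'}^{m'}(e))$ (whose moments run only up to degree $k'-2m'$), not a 1D analogue of $\widetilde{\mathcal N}$ with moments up to degree $k$; with that adjustment your bookkeeping matches the paper exactly.
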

\begin{proof}
Due to Lemma~\ref{lem:tidleVkdim}, we have $\dim\widetilde{V}_k^m(K)\geq\#\widetilde{\mathcal N}_k^m(K)$. Assume $v\in\widetilde{V}_k^m(K)$ and all the DoFs in $\widetilde{\mathcal N}_k^m(K)$ vanish. By the recursive definition of $\widetilde{V}_k^m(K)$, it follows from the vanishing DoFs on the boundary in $\widetilde{\mathcal N}_k^m(K)$ that $v\in H_0^m(K)$. Employing the integration by parts, we get from $(-\Delta)^mv|_K\in\mathbb P_k(K)$ that 
$$
\|\nabla^mv\|_{0,K}^2=(\nabla^mv,\nabla^mv)_K=(v, (-\Delta)^mv)_K=0.
$$
Thus $v=0$.
\end{proof}

\begin{lemma}
The DoFs~\eqref{eq:cfmvemdof1}-\eqref{eq:cfmvemdof3}, i.e. $\mathcal N_k^m(K)$, are uni-solvent for the local virtual element space $V_k^m(K)$. Consequently the mapping $\mathcal D_K: V_k^m(K)\to\mathcal D(K)$ is bijective.
\end{lemma}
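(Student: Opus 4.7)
My plan is to establish uni-solvence of $\mathcal{N}_k^m(K)$ on $V_k^m(K)$; bijectivity of $\mathcal{D}_K$ will then follow once the cardinalities are matched and an analogous injectivity argument is carried out for $\mathcal{D}_K$ itself.

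First I count dimensions. Lemma~\ref{lem:tildevemunisolvence} yields $\dim \widetilde{V}_k^m(K) = \dim \widetilde{\mathcal{D}}(K)$. Since $V_k^m(K)$ is obtained from $\widetilde{V}_k^m(K)$ by imposing at most $\dim \mathbb{P}_{k-2m}^{\perp}(K)$ additional linear conditions, one gets
$$\dim V_k^m(K) \;\ge\; \dim\widetilde{\mathcal{D}}(K) - \dim \mathbb{P}_{k-2m}^{\perp}(K) \;=\; \dim \mathcal{D}(K) \;=\; \#\mathcal{N}_k^m(K).$$

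Next I prove injectivity. Let $v \in V_k^m(K)$ with every DoF in $\mathcal{N}_k^m(K)$ vanishing. Mimicking the argument of Lemma~\ref{lem:tildevemunisolvence} and applying the recursive assumption (iii) to faces, the vanishing of the vertex DoFs~\eqref{eq:cfmvemdof1} and the face DoFs~\eqref{eq:cfmvemdof2} propagates through uni-solvence on faces to force $v \in H_0^m(K)$. For any $q \in \mathbb{P}_k(K)$ we have $(-\Delta)^m q \in \mathbb{P}_{k-2m}(K)$; combining $v \in H_0^m(K)$ with Green's identity~\eqref{eq:greenidentity} and the vanishing interior DoFs~\eqref{eq:cfmvemdof3} yields $(\nabla^m v, \nabla^m q)_K = (v, (-\Delta)^m q)_K = 0$. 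By the definition~\eqref{eq:projlocal1} of $\Pi_k^K$ the same holds with $v$ replaced by $\Pi_k^K v$; taking $q = \Pi_k^K v \in \mathbb{P}_k(K)$ forces $\Pi_k^K v \in \mathbb{P}_{m-1}(K)$. Because the vertex data~\eqref{eq:cfmvemdof1} vanish, the right-hand sides in~\eqref{eq:projlocal2} vanish as well, and the well-posedness of the projector on $\mathbb{P}_{m-1}(K)$ then gives $\Pi_k^K v = 0$. Consequently the enhancement $(v - \Pi_k^K v, q)_K = 0$ for $q \in \mathbb{P}_{k-2m}^{\perp}(K)$ reduces to $(v, q)_K = 0$, which combined with~\eqref{eq:cfmvemdof3} yields $(v, q)_K = 0$ for every $q \in \mathbb{P}_k(K)$. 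Since $(-\Delta)^m v \in \mathbb{P}_k(K)$, a final application of Green's identity gives $\|\nabla^m v\|_{0,K}^2 = (v, (-\Delta)^m v)_K = 0$, and $v \in H_0^m(K)$ forces $v = 0$.

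Bijectivity of $\mathcal{D}_K$ then follows: the dimensions of $V_k^m(K)$ and $\mathcal{D}(K)$ coincide by the uni-solvence just proved, and injectivity can be shown by the same Green-identity bootstrap—if $\mathcal{D}_K v = 0$ then the recursive argument on faces gives $v \in H_0^m(K)$, the last component forces $(-\Delta)^m v = 0$, and $\|\nabla^m v\|_{0,K}^2 = (v,(-\Delta)^m v)_K = 0$ gives $v = 0$. The main obstacle is the middle step that promotes the information $(v, q)_K = 0$ from $q \in \mathbb{P}_{k-2m}(K)$ to the full space $\mathbb{P}_k(K)$: this requires first pinning down $\Pi_k^K v$ using the ellipticity of the projector's local problem on $\mathbb{P}_k(K)/\mathbb{P}_{m-1}(K)$ together with the vanishing vertex data, after which the enhancement condition closes the loop.
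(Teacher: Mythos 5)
Your proof is correct and follows essentially the same route as the paper's: a dimension bound $\dim V_k^m(K)\ge\#\mathcal N_k^m(K)$ from the definition of $V_k^m(K)$ and Lemma~\ref{lem:tildevemunisolvence}, then injectivity by showing $\Pi_k^Kv=0$, upgrading $(v,q)_K=0$ from $\mathbb P_{k-2m}(K)$ to all of $\mathbb P_k(K)$ via the enhancement constraint, and closing with Green's identity on $H_0^m(K)$. The one place you expand beyond the paper is the derivation of $\Pi_k^Kv=0$: the paper simply invokes the earlier observation that $\Pi_k^Kv$ is computable from $\mathcal N_k^m(K)$ (so it vanishes when all DoFs vanish), whereas you re-derive it from scratch --- Green's identity plus vanishing moments against $(-\Delta)^m\mathbb P_k(K)\subseteq\mathbb P_{k-2m}(K)$ gives $(\nabla^m\Pi_k^Kv,\nabla^mq)_K=0$ for all $q\in\mathbb P_k(K)$, hence $\Pi_k^Kv\in\mathbb P_{m-1}(K)$, and the vanishing vertex data combined with the uniqueness in \eqref{eq:projlocal2} forces $\Pi_k^Kv=0$. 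This is exactly the content behind the paper's ``computable'' shortcut, spelled out explicitly, and it is a perfectly valid (arguably more self-contained) way to present the argument.
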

\begin{proof}
By the definition of $\dim V_k^m(K)$, it holds $\dim V_k^m(K)\geq \#\mathcal N_k^m(K)$.
Assume $v\in V_k^m(K)$ and the DoFs~\eqref{eq:cfmvemdof1}-\eqref{eq:cfmvemdof3} vanish.
Notice that $\Pi_k^Kv=0$. Hence
$$
(v, q)_K=0\quad\forall~q\in\mathbb P_k(K),
$$
which together with Lemma~\ref{lem:tildevemunisolvence} yields $v=0$.
\end{proof}

As the two dimensioncal case, it holds
\begin{equation}\label{eq:QkKrepresent}
Q_k^Kv= \Pi_k^Kv + Q_{k-2m}^Kv-Q_{k-2m}^K\Pi_k^Kv\quad\forall~v\in V_k^m(K).   
\end{equation}
Hence $Q_k^Kv$ is computable using only the DoFs~\eqref{eq:cfmvemdof1}-\eqref{eq:cfmvemdof3} for any $v\in V_k^m(K)$.
And then $Q_{k}^K(\nabla^jv)$ is computable using only the DoFs~\eqref{eq:cfmvemdof1}-\eqref{eq:cfmvemdof3} for any $v\in V_k^m(K)$ and $j=1, \cdots, m$. As a result of \eqref{eq:QkKrepresent}, we have
\begin{equation}\label{eq:QkKrepresentkerpi}
Q_k^Kv= Q_{k-2m}^Kv\quad\forall~v\in \ker(\Pi_k^K)\cap V_k^m(K),   
\end{equation}
where $\ker(\Pi_k^K)\cap V_{k}^{m}(K):=\{v\in V_{k}^{m}(K): \Pi_k^Kv=0\}$.

The $H^2$-conforming virtual elements in three dimensions have been constructed in \cite{BeiraodaVeigaDassiRusso2020}.
\begin{remark}\rm
For the lowest degree case $k=m$, the DoFs \eqref{eq:cfmvemdof2}-\eqref{eq:cfmvemdof3} disappear, and $\mathcal N_k^m(K)$ will reduce to
$$
h_K^j\nabla^{j}v(\delta) \quad\forall~\delta\in\mathcal F^{n}(K), \;j=0,1,\cdots,m-1.
$$
\end{remark}

\begin{remark}\rm
When $k=m$ and $K\subset\mathbb R^n$ is a simplex, 
$$
\dim V_k^m(K)=(n+1)\dim\mathbb P_{m-1}(K).
$$
As a comparison, the dimension of the lowest degree $H^m$-conforming finite element in \cite{HuLinWu2021} is $\dim \mathbb P_{2^n(m-1)+1}(K)$, which is much larger than $\dim V_k^m(K)$. 
\end{remark}

\section{Inverse inequality and norm equivalences}\label{sec:invnorm}

We will derive the inverse inequality and several norm equivalences of the virtual elements $(K, \mathcal N_k^m(K), V_k^m(K))$ by the mathematical induction in this section, which are vitally important in the error analysis of virtual element methods.
Henceforth,  we always assume mesh conditions (A1) and (A2) hold, and polytope $K\in\mathcal T_h$.

\subsection{Inverse inequality}

We first employ the multiplicative trace inequality, the inverse trace theorem and the inverse inequality for polynomials to prove the inverse inequality for $V_k^m(K)$ through the mathematical induction.
\begin{lemma}\label{lem:inverseinequality0}
Let $F\in\mathcal F^r(K)$ with $r=0,1,\cdots, n-1$, $\ell=2,\cdots, m$ and integer $k_{\ell}\geq\ell$.
Let $v\in V_{k_{\ell}}^{\ell}(F)$ and positive integer $j\leq\ell-1$.
If $|v|_{j+1,F}\lesssim h_F^{-1}|v|_{j,F}$, then
\begin{equation*}
|v|_{j,F}\lesssim h_F^{-1}|v|_{j-1,F}.
\end{equation*}
\end{lemma}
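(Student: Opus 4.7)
My plan is to prove this by a single integration-by-parts step on $|v|_{j,F}^2 = (\nabla^j v, \nabla^j v)_F$, using the hypothesis to control the extra derivative that appears, and then closing via the multiplicative trace inequality \eqref{L2tracemulti} and Young's inequality. Note that since $r \leq n-1$, the face $F$ is positive-dimensional and star-shaped, so \eqref{L2tracemulti} applies to $F$.

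Writing $\nabla^j v = \nabla(\nabla^{j-1} v)$ and integrating by parts on the outermost derivative gives
\begin{equation*}
|v|_{j,F}^2 = -(\nabla^{j-1}v,\, \nabla^{j-1}(\Delta v))_F + (\nabla^{j-1}v,\, (\nabla^j v)\cdot \nu_{F})_{\partial F},
\end{equation*}
since the divergence of $\nabla^j v$ on its last index equals $\nabla^{j-1}(\Delta v)$. For the volume term, $\|\nabla^{j-1}(\Delta v)\|_{0,F} \leq |v|_{j+1,F}$, so together with the hypothesis $|v|_{j+1,F}\lesssim h_F^{-1}|v|_{j,F}$ the volume term is bounded by $h_F^{-1}|v|_{j-1,F}|v|_{j,F}$.

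For the boundary term, I apply the multiplicative trace inequality \eqref{L2tracemulti} componentwise to both factors:
\begin{align*}
\|\nabla^{j-1} v\|_{0,\partial F}^2 &\lesssim h_F^{-1}|v|_{j-1,F}\big(|v|_{j-1,F}+h_F |v|_{j,F}\big),\\
\|\nabla^j v\|_{0,\partial F}^2 &\lesssim h_F^{-1}|v|_{j,F}\big(|v|_{j,F}+h_F |v|_{j+1,F}\big)\lesssim h_F^{-1}|v|_{j,F}^2,
\end{align*}
where the hypothesis is used again in the last step. Multiplying these estimates and applying Cauchy--Schwarz produces two types of terms, namely $h_F^{-1}|v|_{j-1,F}|v|_{j,F}$ and $h_F^{-1/2}|v|_{j-1,F}^{1/2}|v|_{j,F}^{3/2}$. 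Combining everything yields
\begin{equation*}
|v|_{j,F}^2 \lesssim h_F^{-1}|v|_{j-1,F}|v|_{j,F} + h_F^{-1/2}|v|_{j-1,F}^{1/2}|v|_{j,F}^{3/2}.
\end{equation*}

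The main subtlety is then closing the estimate: the right-hand side contains $|v|_{j,F}$ to a power less than $2$ but nontrivially, so a direct comparison with the left-hand side fails. I would handle this with Young's inequality, using exponents $(2,2)$ on the first term and $(4/3, 4)$ on the second, to obtain $|v|_{j,F}^2 \lesssim \epsilon |v|_{j,F}^2 + C_\epsilon h_F^{-2}|v|_{j-1,F}^2$; choosing $\epsilon$ sufficiently small absorbs the $|v|_{j,F}^2$ term into the left-hand side. Taking square roots delivers $|v|_{j,F}\lesssim h_F^{-1}|v|_{j-1,F}$, as required. The only delicate point is ensuring the implicit constants (depending on $k_\ell$, $\ell$, $n$, and the chunkiness parameter, but not on $h_F$) in the absorption are tracked correctly, which is immediate since all ingredients --- \eqref{L2tracemulti}, the IBP, and Young's inequality --- are $h_F$-explicit.
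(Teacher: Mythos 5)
Your argument is essentially identical to the paper's proof: one integration by parts on $|v|_{j,F}^2$, bounding the volume term via Cauchy--Schwarz and the hypothesis, bounding the boundary term via the multiplicative trace inequality \eqref{L2tracemulti} applied to both $\nabla^{j-1}v$ and $\nabla^j v$ (with the hypothesis absorbing the $|v|_{j+1,F}$ contribution), arriving at the same bound $|v|_{j,F}^2\lesssim h_F^{-1}|v|_{j-1,F}|v|_{j,F}+h_F^{-1/2}|v|_{j-1,F}^{1/2}|v|_{j,F}^{3/2}$, and closing with Young's inequality. The only cosmetic difference is that you write $\nabla$, $\Delta$, $\nu_F$ in place of the surface operators $\nabla_F$, $\Delta_F$, $\nu_{F,e}$ used when $r\geq 1$, which does not affect the argument.
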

\begin{proof}
It follows from the integration by parts,
the multiplicative trace inequality~\eqref{L2tracemulti} and the assumption $|v|_{j+1,F}\lesssim h_F^{-1}|v|_{j,F}$ that
\begin{align*}
|v|_{j,F}^2&=(\nabla_F^jv, \nabla_F^jv)_F=-(\Delta_F\nabla_F^{j-1}v, \nabla_F^{j-1}v)_F+\sum_{e\in\mathcal F^1(F)}(\frac{\partial}{\partial\nu_{F,e}}\nabla_F^{j-1}v, \nabla_F^{j-1}v)_{e} \\
&\lesssim |v|_{j+1,F}|v|_{j-1,F} +\|\nabla_F^{j}v\|_{0,\partial F}\|\nabla_F^{j-1}v\|_{0,\partial F} \\
&\lesssim |v|_{j+1,F}|v|_{j-1,F} \\
&\quad\quad+h_F^{-1}|v|_{j, F}^{1/2}|v|_{j-1, F}^{1/2}(|v|_{j, F}^{1/2}+h_F^{1/2}|v|_{j+1, F}^{1/2})(|v|_{j-1, F}^{1/2}+h_F^{1/2}|v|_{j, F}^{1/2}) \\
&\lesssim h_F^{-1}|v|_{j,F}|v|_{j-1,F}+h_F^{-1}|v|_{j, F}|v|_{j-1, F}^{1/2}(|v|_{j-1, F}^{1/2}+h_F^{1/2}|v|_{j, F}^{1/2})\\
&\lesssim h_F^{-1}|v|_{j,F}|v|_{j-1,F}+h_F^{-1/2}|v|_{j, F}^{3/2}|v|_{j-1, F}^{1/2}.
\end{align*}
Thus we end the proof by applying the Young's inequality to the last inequality.
\end{proof}


\begin{lemma}\label{lem:20210419}
Let $F\in\mathcal F^r(K)$ with $r=0,1,\cdots, n-2$.
Assume on each $e\in \mathcal F^1(F)$, it holds for any positive integer $\ell\leq m$ and integer $k_{\ell}\geq\ell$ that
\begin{equation}\label{eq:20210419}
|w|_{j,e}\lesssim h_e^{i-j}|w|_{i,e}\quad \forall~w\in V_{k_{\ell}}^{\ell}(e),\; 0\leq i< j\leq \ell.
\end{equation}
Then we have for any positive integer $\ell\leq m$ and integer $k_{\ell}\geq\ell$ that
\begin{equation}\label{eq:20210419-1}
|v|_{j,F}\lesssim h_F^{i-j}|v|_{i,F}\quad \forall~v\in V_{k_{\ell}}^{\ell}(F),\; 0\leq i< j\leq \ell.
\end{equation}
\end{lemma}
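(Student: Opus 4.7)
The plan is to reduce the general statement to the single top-order estimate
\begin{equation*}
|v|_{\ell,F} \lesssim h_F^{-1}|v|_{\ell-1,F}, \qquad v \in V_{k_\ell}^\ell(F),
\end{equation*}
from which the full claim follows by iterated application of Lemma~\ref{lem:inverseinequality0}. Once the top-order inequality is established, Lemma~\ref{lem:inverseinequality0} used successively with $j=\ell-1,\ell-2,\ldots,1$ gives $|v|_{j,F}\lesssim h_F^{-1}|v|_{j-1,F}$ for every $1\le j\le \ell$; composing these one-step bounds then yields \eqref{eq:20210419-1}.

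To establish the top-order estimate, I would proceed by induction on $\ell$. The base $\ell=1$ is the standard $H^1$-conforming virtual element inverse inequality, derivable from Green's identity \eqref{eq:greenidentity}, the polynomial inverse \eqref{eq:polyinverse}, the multiplicative trace inequality \eqref{L2tracemulti}, and the assumed edge estimate \eqref{eq:20210419}. For the inductive step $\ell\ge 2$, apply Green's identity \eqref{eq:greenidentity} on $F$ with $q=v$ to obtain
\begin{equation*}
|v|_{\ell,F}^2 = (v,(-\Delta)^\ell v)_F + \sum_{i=0}^{\ell-1}(\nabla^{i}v,\nabla^{i}(-\Delta)^{\ell-i-1}\partial_\nu v)_{\partial F}.
\end{equation*}
Since $(-\Delta)^\ell v\in\mathbb P_{k_\ell}(F)$, the polynomial inverse \eqref{eq:polyinverse} combined with the duality bound $\|(-\Delta)^\ell v\|_{-\ell,F}\le|v|_{\ell,F}$ (obtained by testing against $\phi\in H_0^\ell(F)$ and rewriting the pairing as $(\nabla^\ell v,\nabla^\ell\phi)_F$) gives $\|(-\Delta)^\ell v\|_{0,F}\lesssim h_F^{-\ell}|v|_{\ell,F}$. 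Each boundary term is then handled by the multiplicative trace inequality \eqref{L2tracemulti}, which reduces it to $L^2(e)$-norms on edges $e\in\mathcal F^1(F)$; by \eqref{eq:20210413} the relevant traces decompose into tangential derivatives of $\partial^{|\alpha|}v/\partial\nu_{F,e}^\alpha|_e\in V_{k_\ell-|\alpha|}^{\ell-|\alpha|}(e)$, to which the edge hypothesis \eqref{eq:20210419} directly applies.

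The main obstacle lies in the interior term $(v,(-\Delta)^\ell v)_F$: a plain Cauchy--Schwarz produces $\|v\|_{0,F}\|(-\Delta)^\ell v\|_{0,F}\lesssim h_F^{-\ell}\|v\|_{0,F}|v|_{\ell,F}$, and the factor $\|v\|_{0,F}$ lacks a direct Poincar\'e-type bound in terms of $|v|_{\ell-1,F}$ for general $v\in V_{k_\ell}^\ell(F)$. The resolution is to split $(-\Delta)^\ell v=p_1+p_2$ along $\mathbb P_{k_\ell-2\ell}(F)\oplus\mathbb P_{k_\ell-2\ell}^{\perp}(F)$, use the VEM orthogonality $(v-\Pi_{k_\ell}^F v,p_2)_F=0$ to replace $v$ by $\Pi_{k_\ell}^F v$ against the $p_2$-component, and apply Green's identity in reverse to $(\Pi_{k_\ell}^F v,(-\Delta)^\ell v)_F$. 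By the defining relation of the projector $\Pi_{k_\ell}^F$, this recovers $|\Pi_{k_\ell}^F v|_{\ell,F}^2\le|v|_{\ell,F}^2$ modulo boundary contributions that merge with those already present and are absorbed through the edge estimate and Young's inequality; the remaining $(Q_{k_\ell-2\ell}^F v,p_1)_F$ pairing involves only the polynomial moment $Q_{k_\ell-2\ell}^F v$, whose $L^2$-norm is controlled through scaled bounds on degrees of freedom and the inductive hypothesis on $\ell$.
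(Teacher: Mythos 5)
Your high-level plan matches the paper's: prove the single top-order inequality $|v|_{\ell,F}\lesssim h_F^{-1}|v|_{\ell-1,F}$ and then cascade down using Lemma~\ref{lem:inverseinequality0}. But your route to the top-order inequality contains a genuine gap. After Green's identity you face the interior term $(v,(-\Delta_F)^{\ell}v)_F$. Your duality bound $\|(-\Delta_F)^{\ell}v\|_{-\ell,F}\le |v|_{\ell,F}$ combined with \eqref{eq:polyinverse} only produces a factor $h_F^{-\ell}|v|_{\ell,F}$, while the target is $|v|_{\ell-1,F}$; so the remaining factor must be $\|v\|_{0,F}\lesssim h_F^{\ell-1}|v|_{\ell-1,F}$, which is false for $\ell\geq 2$ (take $v\equiv 1$). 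Your proposed fix is to split $(-\Delta_F)^{\ell}v=p_1+p_2$ and exploit the VEM orthogonality $(v-\Pi_{k_\ell}^F v,p_2)_F=0$, but this leaves a pairing against $Q_{k_\ell-2\ell}^Fv$, whose $L^2$-norm is an independent degree of freedom and cannot be bounded by any seminorm $|v|_{j,F}$ for $j\geq 1$. Invoking ``scaled bounds on degrees of freedom'' would require the norm equivalence \eqref{eq:normequivalencevertices} or \eqref{eq:normequivalence1}, both of which are proved \emph{after} the inverse inequality and use it — a circularity. The ``inductive hypothesis on $\ell$'' also does not apply, since the lemma is not proved by induction on $\ell$ (the induction is on face dimension, encoded in hypothesis \eqref{eq:20210419}), and the spaces $V_{k_\ell}^\ell(F)$ and $V_{k_{\ell-1}}^{\ell-1}(F)$ are unrelated.

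The paper sidesteps the $\|v\|_{0,F}$ obstruction entirely by introducing the $\ell$-harmonic lifting $v^b$ with $(-\Delta_F)^{\ell}v^b=0$ and boundary traces matching those of $v$, then decomposing $v=v^b+(v-v^b)$. The crucial point is $v-v^b\in H_0^{\ell}(F)$: this makes the Green's identity for $|v-v^b|_{\ell,F}^2$ boundary-free \emph{and} activates the Poincar\'e--Friedrichs inequality \eqref{eq:Poincare-Friedrichs2} to give $\|v-v^b\|_{0,F}\lesssim h_F^{\ell}|v-v^b|_{\ell,F}$, which together with $\|(-\Delta_F)^{\ell}v\|_{-\ell-1,F}\lesssim |v|_{\ell-1,F}$ (a stronger duality than yours, testing against $H_0^{\ell+1}$) closes the bound on $|v-v^b|_{\ell,F}$. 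The piece $|v^b|_{\ell,F}$ is then controlled purely by boundary traces via the inverse trace theorem, interpolation between $H^{1/2}$ and $L^2/H^1$ on $\partial F$, the decomposition \eqref{eq:20210413}, and the edge-level hypothesis \eqref{eq:20210419}. Without the $H_0^{\ell}$ structure that $v^b$ supplies, I do not see how your proposed argument can be completed.
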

\begin{proof}
Let $v^b\in H^{\ell}(F)$ be the solution of the polyharmonic equation with nonhomogeneous Dirichlet boundary condition
\begin{equation*}
\left\{
\begin{aligned}
&(-\Delta_F)^{\ell}v^b=0\quad\,\text{ in
}F,  \\
& \frac{\partial^{j}v^b}{\partial \nu_{F,e}^{j}}=\frac{\partial^{j}v}{\partial \nu_{F,e}^{j}}  \quad\text{ on each }e\in\mathcal F^1(F), j=0,1,\cdots, \ell-1.
\end{aligned}
\right.
\end{equation*}
By \eqref{eq:20210413} we have $\nabla_F^j v^b|_{\partial F}=\nabla_F^j v|_{\partial F}$ for $j=0,1,\cdots, \ell-1$.
It is easy to check that
$$
|v^b|_{\ell,F}=\inf_{\phi\in H^{\ell}(F)\atop \nabla_F^j \phi|_{\partial F}=\nabla_F^j v|_{\partial F}, j=0,1,\cdots, \ell-1} |\phi|_{\ell, F}.
$$
On the other side,
due to Lemma~\ref{lem:inversetracethm}
and the Lipschitz isomorphism \cite{BrennerSung2018}, 
there exists $\phi\in H^{\ell}(F)$ such that $\frac{\partial^{j}\phi}{\partial \nu_{F,e}^{j}}=\frac{\partial^{j}v}{\partial \nu_{F,e}^{j}}$ ($j=0,1,\cdots, \ell-1$) for each $e\in\mathcal F^1(F)$, and
$$
|\phi|_{\ell,F}\lesssim \|\nabla_F^{\ell-1}\phi\|_{1/2,\partial F}.
$$ 
Hence $|v^b|_{\ell,F}\lesssim \|\nabla_F^{\ell-1}v\|_{1/2,\partial F}$.
By the space interpolation theory \cite{AdamsFournier2003,BrennerScott2008},
$$
|v^b|_{\ell,F}\lesssim \|\nabla_F^{\ell-1}v\|_{0,\partial F}^{1/2}(h_F^{-1/2}\|\nabla_F^{\ell-1}v\|_{0,\partial F}^{1/2}+|\nabla_F^{\ell-1}v|_{1,\partial F}^{1/2}).
$$ 
By \eqref{eq:20210413} and \eqref{eq:20210419} with $w=\frac{\partial^{j}v}{\partial \nu_{F,e}^{j}}\in V_{k_{\ell}-j}^{\ell-j}(e)$, 
\begin{align*}
|\nabla_F^{\ell-1}v|_{1,\partial F}^2&=\sum_{e\in\mathcal F^1(F)}\left|\sum_{\alpha\in A_{n-r-1},\, j\in\mathbb N \atop |\alpha|+j= \ell-1}\frac{(\ell-1)!}{j!\alpha!}\sym(\nu_{F,e}^{j}\otimes t_{e}^{\alpha})\frac{\partial^{\ell-1}v}{\partial t_{e}^{\alpha}\partial \nu_{F,e}^{j}}\right|_{1,e}^2 \\
&\lesssim \sum_{e\in\mathcal F^1(F)}\sum_{\alpha\in A_{n-r-1},\, j\in\mathbb N \atop |\alpha|+j= \ell-1}\bigg|\frac{\partial^{\ell-1}v}{\partial t_{e}^{\alpha}\partial \nu_{F,e}^{j}}\bigg|_{1,e}^2 \\
&\lesssim h_F^{-2}\sum_{e\in\mathcal F^1(F)}\sum_{\alpha\in A_{n-r-1},\, j\in\mathbb N \atop |\alpha|+j= \ell-1}\bigg\|\frac{\partial^{\ell-1}v}{\partial t_{e}^{\alpha}\partial \nu_{F,e}^{j}}\bigg\|_{0,e}^2 \lesssim h_F^{-2}\|\nabla_F^{\ell-1}v\|_{0,\partial F}^2.
\end{align*}
Thus 
\begin{equation}\label{eq:20210527-3}
|v^b|_{\ell,F}\lesssim h_F^{-1/2}\|\nabla_F^{\ell-1}v\|_{0,\partial F}.  
\end{equation}
Applying the multiplicative trace inequality~\eqref{L2tracemulti}, we get
\begin{align}
|v^b|_{\ell,F}&\lesssim h_F^{-1}|v|_{\ell-1, F}^{1/2}(|v|_{\ell-1,F}^{1/2}+h_F^{1/2}|v|_{\ell,F}^{1/2}) \notag\\
&\lesssim h_F^{-1}|v|_{\ell-1,F} + h_F^{-1/2}|v|_{\ell-1,F}^{1/2}|v|_{\ell,F}^{1/2}. \label{eq:20210418-1}
\end{align}
Notice that $v-v^b\in H_0^{\ell}(F)$. It follows from the inverse inequality for polynomials~\eqref{eq:polyinverse}, the fact $(-\Delta_F)^{\ell}v^b=0$ and the Poincar\'e-Friedrichs inequality \eqref{eq:Poincare-Friedrichs2} that
\begin{align}
|v-v^b|_{\ell,F}^2&=(\nabla_F^{\ell}(v-v^b), \nabla_F^{\ell}(v-v^b))_F=((-\Delta_F)^{\ell}(v-v^b), v-v^b)_F \label{eq:20210421-1}\\
&\leq \|(-\Delta_F)^{\ell}v\|_{0,F}\|v-v^b\|_{0,F} \notag\\
&\lesssim h_F^{-\ell-1}\|(-\Delta_F)^{\ell}v\|_{-\ell-1,F}\|v-v^b\|_{0,F} \notag\\
&\lesssim h_F^{-\ell-1}|v|_{\ell-1,F}\|v-v^b\|_{0,F} \notag\\
&\lesssim h_F^{-1}|v|_{\ell-1,F}|v-v^b|_{\ell,F},  \notag
\end{align}
which means 
$|v-v^b|_{\ell,F}\lesssim h_F^{-1}|v|_{\ell-1,F}$.
Together with \eqref{eq:20210418-1}, we obtain
$$
|v|_{\ell,F}\leq |v^b|_{\ell,F}+|v-v^b|_{\ell,F}\lesssim h_F^{-1}|v|_{\ell-1,F} + h_F^{-1/2}|v|_{\ell-1,F}^{1/2}|v|_{\ell,F}^{1/2}.
$$
Thus 
$$
|v|_{\ell,F}\lesssim h_F^{-1}|v|_{\ell-1,F}.
$$
Finally we achieve \eqref{eq:20210419-1} from Lemma~\ref{lem:inverseinequality0}.
\end{proof}

\begin{lemma}[Inverse inequality]
For each $F\in\mathcal F^r(K)$ with $r=0,1,\cdots, n-1$, it holds the inverse inequality
\begin{equation}\label{eq:inverseinequality}
|v|_{j,F}\lesssim h_F^{i-j}|v|_{i,F}\quad \forall~v\in V_k^m(K),\; 0\leq i< j\leq m.
\end{equation}
\end{lemma}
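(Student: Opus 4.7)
The plan is to prove the stronger claim that for every face $F$ of $K$ of positive dimension (including $K$ itself), and every pair of integers $(\ell,k_\ell)$ with $1\leq\ell\leq m$ and $k_\ell\geq\ell$, the inverse inequality
\[
|w|_{j,F}\lesssim h_F^{i-j}|w|_{i,F}\quad\forall\, w\in V_{k_\ell}^\ell(F),\ 0\leq i<j\leq\ell
\]
holds. The statement \eqref{eq:inverseinequality} then follows by specializing to $\ell=m$, $k_\ell=k$, and observing that for $v\in V_k^m(K)$ and $F\in\mathcal F^r(K)$, the recursive definition of $V_k^m(K)$ (take $\alpha=0$) gives $v|_F\in V_k^m(F)$, so $v|_F$ is admissible in the stronger claim.

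To prove the strengthened claim I induct on $s=\dim F$. For the base case $s=1$, $F$ is an edge and the one-dimensional virtual element space $V_{k_\ell}^\ell(F)=\mathbb P_{\max\{k_\ell,\,2\ell-1\}}(F)$ is a pure polynomial space, so the inequality reduces to the standard polynomial inverse inequality on an interval, which follows from \eqref{eq:polyinverse} (combined with duality) or directly by scaling to a reference interval.

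For the inductive step, assume the inequality has been established on all faces of dimension $s-1$ for every admissible pair $(\ell,k_\ell)$, and fix a face $F$ of $K$ with $\dim F=s\geq 2$. Every facet $e\in\mathcal F^1(F)$ then has dimension $s-1$, so the induction hypothesis supplies precisely the estimate \eqref{eq:20210419} required as a hypothesis in Lemma~\ref{lem:20210419}. Invoking that lemma yields the desired inverse inequality on $V_{k_\ell}^\ell(F)$ for every admissible $(\ell,k_\ell)$, and the induction closes.

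All of the substantive analytic work---the boundary lifting $v^b$ via the inverse trace theorem, the polyharmonic Dirichlet problem, the $H^{1/2}$ interpolation estimate, and the edgewise reduction of $|\nabla_F^{\ell-1}v|_{1,\partial F}$---is already packaged inside Lemma~\ref{lem:20210419}, so no new analytic difficulty appears at this stage. The only point requiring care is to carry the induction uniformly in the pair $(\ell,k_\ell)$ rather than only for the specific pair $(m,k)$, because Lemma~\ref{lem:20210419} consumes the entire family of lower-dimensional inverse inequalities at once; this is the single place where a naive induction on just $\ell=m$ would break down.
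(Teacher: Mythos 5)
Your argument is correct and is essentially the paper's proof: the base case on one-dimensional edges reduces to the polynomial inverse inequality \eqref{eq:polyinverse}, and the climb through dimensions is supplied verbatim by Lemma~\ref{lem:20210419}. Your write-up is more careful than the paper's terse statement in two useful respects---it records explicitly that $v|_F\in V_k^m(F)$ (via $\alpha=0$ in the recursive definition of $V_k^m(K)$), and it flags that the induction must be carried uniformly over all admissible pairs $(\ell,k_\ell)$ because Lemma~\ref{lem:20210419} consumes the full family of facet estimates---but there is no substantive difference in approach.
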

\begin{proof}
On each $e\in \mathcal F^{n-1}(K)$, since $V_{k_{\ell}}^{\ell}(e)=\mathbb P_{\max\{k_{\ell}, 2\ell-1\}}(e)$ for any positive integer $\ell\leq m$ and integer $k_{\ell}\geq\ell$, applying the inverse inequality for polynomials~\eqref{eq:polyinverse}, 
\begin{equation*}
|w|_{j,e}\lesssim h_e^{i-j}|w|_{i,e}\quad \forall~w\in V_{k_{\ell}}^{\ell}(e),\; 0\leq i< j\leq \ell.
\end{equation*}
Thus \eqref{eq:inverseinequality} follows from Lemma~\ref{lem:20210419} by applying the mathematical induction.
\end{proof}

\subsection{Norm equivalences}
Next we show several norm equivalences on the finite dimensional spaces $V_{k}^{m}(K)$, $\ker(Q_k^K)\cap V_{k}^{m}(K)$ and $\ker(\Pi_k^K)\cap V_{k}^{m}(K)$, where $\ker(Q_k^K)\cap V_{k}^{m}(K):=\{v\in V_{k}^{m}(K): Q_k^Kv=0\}$.

\begin{lemma}
Let $F\in\mathcal F^r(K)$ with $r=0,1,\cdots, n-1$.
It holds for any positive integer $\ell\leq m$, integer $k_{\ell}\geq\ell$ and non-negative integer $j\leq \ell-1$ that
\begin{equation}\label{eq:normequivalencevertices}
|v|_{j,F}\eqsim h_F|v|_{j+1,F} + h_F^{(n-r)/2}\Bigg|\sum_{\delta\in\mathcal F^{n-r}(F)}(\nabla_F^{j}v)(\delta)\Bigg| \quad\forall~v\in V_{k_{\ell}}^{\ell}(F).
\end{equation}
\end{lemma}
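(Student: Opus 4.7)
I will establish the two inequalities separately. The direction $\mathrm{RHS}\lesssim|v|_{j,F}$ rests on the inverse inequality and repeated application of the multiplicative trace inequality; the direction $|v|_{j,F}\lesssim\mathrm{RHS}$ is a Bramble-Hilbert/Poincar\'e-type argument combined with scaling.

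\emph{Direction $\mathrm{RHS}\lesssim|v|_{j,F}$.} The first summand $h_F|v|_{j+1,F}$ is controlled by $|v|_{j,F}$ directly from the inverse inequality \eqref{eq:inverseinequality}. For the vertex term, it suffices to prove the pointwise estimate
\[
h_F^{n-r}|\nabla_F^j v(\delta)|^2\lesssim|v|_{j,F}^2\qquad\forall\,\delta\in\mathcal F^{n-r}(F).
\]
Apply the multiplicative trace inequality \eqref{L2tracemulti} componentwise to $\nabla_F^j v$ and absorb the $H^1$-seminorm on the right-hand side by \eqref{eq:inverseinequality} to obtain $\|\nabla_F^j v\|_{0,\partial F}^2\lesssim h_F^{-1}|v|_{j,F}^2$. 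Iterating this through the skeleton of $F$ descends one face dimension per step, accumulating a factor $h_F^{-(n-r-1)}$ by the time one reaches a one-dimensional edge $e\in\mathcal F^{n-r-1}(F)$. On $e$, the tensor $\nabla_F^j v|_e$ is polynomial: each summand of the decomposition \eqref{eq:20210413} involves an edge-polynomial tangential-normal derivative $\partial^{|\beta|}v/\partial\nu_{F,e}^{\beta}\in V_{k_\ell-|\beta|}^{\ell-|\beta|}(e)=\mathbb P_{\max\{k_\ell-|\beta|,\,2(\ell-|\beta|)-1\}}(e)$ via the recursive VEM construction. A final application of the polynomial inverse inequality \eqref{eq:polyinverse} on $e$ upgrades the $L^2(e)$-bound to the pointwise value at $\delta$, supplying the missing factor $h_F^{-1}$.

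\emph{Direction $|v|_{j,F}\lesssim\mathrm{RHS}$.} Set $\bar c:=\frac{1}{\#\mathcal F^{n-r}(F)}\sum_{\delta\in\mathcal F^{n-r}(F)}\nabla_F^j v(\delta)\in\mathbb S_n(j)$, a constant symmetric tensor, well defined since $\nabla_F^j v$ is continuous on $\overline F$ by the inductive assumption (ii) of the VEM construction (cf.\ Lemma~\ref{lem:continuous2d}). A Bramble-Hilbert/Poincar\'e argument on the star-shaped domain $F$ with chunkiness parameter controlled by (A1)---identifying the vertex-average functional as a bounded linear functional on $V_{k_\ell}^\ell(F)$ that reproduces constants, with kernel exactly the polynomials of degree less than $j$---yields
\[
\|\nabla_F^j v-\bar c\|_{0,F}\lesssim h_F|\nabla_F^j v|_{1,F}=h_F|v|_{j+1,F}.
\]
Then the triangle inequality, together with $|F|\lesssim h_F^{n-r}$ and the uniform bound on $\#\mathcal F^{n-r}(F)$, gives
\[
|v|_{j,F}\le\|\nabla_F^j v-\bar c\|_{0,F}+|F|^{1/2}|\bar c|\lesssim h_F|v|_{j+1,F}+h_F^{(n-r)/2}\Bigl|\sum_{\delta\in\mathcal F^{n-r}(F)}\nabla_F^j v(\delta)\Bigr|.
\]

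\emph{Main obstacle.} The delicate point is the vertex-value estimate in the first direction: iterating the trace inequality requires both continuity of $\nabla_F^j v$ on the closed $F$ \emph{and} polynomiality of $\nabla_F^j v$ on each one-dimensional edge of $F$. Both ingredients are buried in the inductive assumptions (ii)--(iii) on faces and in the tangential-normal decomposition \eqref{eq:20210413}, and must be combined carefully when descending through skeletons of differing dimension. In the second direction, a mild subtlety is that for $n-r\geq 2$ the vertex-average functional is \emph{not} bounded on $H^1(F)$ in general; one must use that $\nabla_F^j v$ lives in a subspace on which pointwise vertex evaluation is continuous, so that the scaling and Bramble-Hilbert steps go through with constants depending only on the chunkiness parameter.
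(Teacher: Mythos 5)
Your treatment of the direction $\mathrm{RHS}\lesssim|v|_{j,F}$ is sound and in fact more explicit than the paper's, which dispenses with it in a single sentence citing \eqref{L2trace} and \eqref{eq:inverseinequality}. The key idea — descend through skeletons one dimension at a time, absorbing the $H^1$-seminorm term via the inverse inequality \eqref{eq:inverseinequality}, and finish with the polynomial inverse inequality on a one-dimensional edge — is precisely what makes this work, and you state it correctly.

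The direction $|v|_{j,F}\lesssim\mathrm{RHS}$ is where you have a genuine gap. You assert that a Bramble--Hilbert/Poincar\'e argument gives $\|\nabla_F^j v-\bar c\|_{0,F}\lesssim h_F|v|_{j+1,F}$ with $\bar c$ the vertex average, and you yourself flag the obstruction: the vertex-average functional is unbounded on $H^1(F)$ once $\dim F=n-r\geq 2$, so the standard Bramble--Hilbert machinery does not apply. Your proposed remedy — ``use that $\nabla_F^j v$ lives in a subspace on which pointwise evaluation is continuous, so the constants depend only on the chunkiness parameter'' — is not actually an argument: continuity of vertex evaluation on a finite-dimensional subspace says nothing a priori about uniformity of the constant over the family of admissible polytopes, since the virtual element space itself varies with the shape of $F$. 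This uniformity is precisely what the just-proved inverse inequality \eqref{eq:inverseinequality} supplies, and it has to be invoked concretely. The paper does so by inserting the polynomial $Q_j^Fv\in\mathbb P_j(F)\subseteq V_{k_\ell}^{\ell}(F)$, observing that $\nabla_F^j(Q_j^Fv)$ is a constant tensor, splitting $h_F^{(n-r)/2}|\nabla_F^j(Q_j^Fv)|$ into the target vertex-sum term plus $h_F^{(n-r)/2}\sum_\delta|\nabla_F^j(Q_j^Fv-v)(\delta)|$, and then controlling the latter by the exact same recursive trace--plus--inverse-inequality descent you used for the easy direction, finally concluding with the triangle inequality and the projection error estimate \eqref{eq:l2projectionestimate}. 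In other words, the hard direction is \emph{not} a Bramble--Hilbert argument at all; it is the same explicit skeleton-descent estimate, applied now to $\nabla_F^j(Q_j^Fv-v)$, where the inverse inequality for the VEM space is the tool that makes the constants uniform. You need to carry out that computation; calling it Bramble--Hilbert and appealing to chunkiness does not close the gap.
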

\begin{proof}
Thanks to the trace inequality \eqref{L2trace} and the inverse inequality \eqref{eq:inverseinequality}, it is sufficient to prove
\begin{equation}\label{eq:20210426-1}
|v|_{j,F}\lesssim h_F|v|_{j+1,F} + h_F^{(n-r)/2}\Bigg|\sum_{\delta\in\mathcal F^{n-r}(F)}(\nabla_F^{j}v)(\delta)\Bigg| \quad\forall~v\in V_{k_{\ell}}^{\ell}(F).
\end{equation}
Noting that $\nabla_F^j(Q_j^Fv)$ is constant,
\begin{align*}
|Q_j^Fv|_{j,F}&=\|\nabla_F^j(Q_j^Fv)\|_{0,F}\eqsim  h_F^{(n-r)/2}|\nabla_F^j(Q_j^Fv)| \\
&\leq h_F^{(n-r)/2}\Bigg|\nabla_F^j(Q_j^Fv) - \frac{1}{\#\mathcal F^{n-r}(F)}\sum_{\delta\in\mathcal F^{n-r}(F)}(\nabla_F^{j}v)(\delta)\Bigg| \\
&\quad+\frac{1}{\#\mathcal F^{n-r}(F)}h_F^{(n-r)/2}\Bigg|\sum_{\delta\in\mathcal F^{n-r}(F)}(\nabla_F^{j}v)(\delta)\Bigg|  \\
&\lesssim h_F^{(n-r)/2}\sum_{\delta\in\mathcal F^{n-r}(F)}|\nabla_F^j(Q_j^Fv) - (\nabla_F^{j}v)(\delta)|  \\
&\quad+h_F^{(n-r)/2}\Bigg|\sum_{\delta\in\mathcal F^{n-r}(F)}(\nabla_F^{j}v)(\delta)\Bigg|.     
\end{align*}
Applying the trace inequality \eqref{L2trace} and the inverse inequality \eqref{eq:inverseinequality} recursively,
\begin{align*}
&\quad h_F^{(n-r)/2}\sum_{\delta\in\mathcal F^{n-r}(F)}|\nabla_F^j(Q_j^Fv-v)(\delta)| \\
&\lesssim h_F^{(n-r-1)/2}\sum_{e\in\mathcal F^{n-r-1}(F)}(\|\nabla_F^j(Q_j^Fv-v)\|_{0,e}+h_e|\nabla_F^j(Q_j^Fv-v)|_{1,e}) \\
&\lesssim h_F^{(n-r-1)/2}\sum_{e\in\mathcal F^{n-r-1}(F)}\|\nabla_F^j(Q_j^Fv-v)\|_{0,e} \\
&\lesssim \cdots\lesssim h_F^{1/2}\sum_{e\in\mathcal F^{1}(F)}\|\nabla_F^j(Q_j^Fv-v)\|_{0,e}\lesssim |Q_j^Fv-v|_{j,F}. 
\end{align*}
Then we get from the last two inequalities that
$$
|v|_{j,F}\leq |v-Q_j^Fv|_{j,F}+|Q_j^Fv|_{j,F}\lesssim |v-Q_j^Fv|_{j,F}+h_F^{(n-r)/2}\Bigg|\sum_{\delta\in\mathcal F^{n-r}(F)}(\nabla_F^{j}v)(\delta)\Bigg|,
$$
which together with \eqref{eq:l2projectionestimate} gives \eqref{eq:20210426-1}.
\end{proof}

\begin{lemma}
Let $F\in\mathcal F^r(K)$ with $r=0,1,\cdots, n-2$.
Assume on each $e\in \mathcal F^1(F)$, it holds for any positive integer $\ell\leq m$ and integer $k_{\ell}\geq\ell$ that
\begin{align}
\|w\|_{0,e}^2&\lesssim \|Q_{k_{\ell}-2\ell}^ew\|_{0,e}^2 + \sum_{\delta\in\mathcal F^{n-r-1}(e)}\sum_{i=0}^{\ell-1}h_e^{n-r-1+2i}|\nabla_e^{i}w(\delta)|^2 \notag\\
&\quad
+ \sum_{s=1}^{n-r-2}\sum_{e'\in\mathcal F^{s}(e)}\sum_{\alpha\in A_{s}, |\alpha|\leq \ell-1}h_e^{s+2|\alpha|}\bigg\|Q_{k_{\ell}-2\ell+|\alpha|}^{e'}\frac{\partial^{|\alpha|}w}{\partial \nu_{e,e'}^{\alpha}}\bigg\|_{0,e'}^2\;\forall~w\in V_{k_{\ell}}^{\ell}(e). \label{eq:20210420}
\end{align}
Then we have for any positive integer $\ell\leq m$, integer $k_{\ell}\geq\ell$, non-negative integer $j\leq \ell$ and $v\in V_{k_{\ell}}^{\ell}(F)$ that
\begin{align}
h_F^{2j}|\Pi_{k_{\ell}}^Fv|_{j,F}^2&\lesssim \|Q_{k_{\ell}-2\ell}^Fv\|_{0,F}^2 + \sum_{\delta\in\mathcal F^{n-r}(F)}\sum_{i=0}^{\ell-1}h_F^{n-r+2i}|\nabla_F^{i}v(\delta)|^2 \notag\\
&\quad
+ \sum_{s=1}^{n-r-1}\sum_{e'\in\mathcal F^{s}(F)}\sum_{\alpha\in A_{s}, |\alpha|\leq \ell-1}h_F^{s+2|\alpha|}\bigg\|Q_{k_{\ell}-2\ell+|\alpha|}^{e'}\frac{\partial^{|\alpha|}v}{\partial \nu_{F,e'}^{\alpha}}\bigg\|_{0,e'}^2. \label{eq:20210421-4}
\end{align}
\end{lemma}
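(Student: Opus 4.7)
The plan is to prove \eqref{eq:20210421-4} in two main stages. First, I reduce the bound for general $j$ to the top-order case $j=\ell$ by iterating \eqref{eq:normequivalencevertices} on the polynomial $\Pi_{k_\ell}^F v\in\mathbb P_{k_\ell}(F)\subseteq V_{k_\ell}^\ell(F)$. The moment condition built into $\Pi_{k_\ell}^F$, namely the analogue on $F$ of \eqref{eq:projlocal2}, enforces $\sum_{\delta\in\mathcal F^{n-r}(F)}(\nabla_F^i\Pi_{k_\ell}^F v)(\delta)=\sum_{\delta\in\mathcal F^{n-r}(F)}(\nabla_F^i v)(\delta)$ for each $i=0,\dots,\ell-1$, so the recursion yields
\begin{equation*}
h_F^{2j}|\Pi_{k_\ell}^F v|_{j,F}^2\lesssim h_F^{2\ell}|\Pi_{k_\ell}^F v|_{\ell,F}^2+\sum_{i=j}^{\ell-1}\sum_{\delta\in\mathcal F^{n-r}(F)}h_F^{n-r+2i}|\nabla_F^i v(\delta)|^2,
\end{equation*}
already accounting for the vertex contribution of \eqref{eq:20210421-4}.

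Second, I control $h_F^{2\ell}|\Pi_{k_\ell}^F v|_{\ell,F}^2$ through a Galerkin plus Green's-identity argument. The defining $H^\ell$ orthogonality gives $|\Pi_{k_\ell}^F v|_{\ell,F}^2=(\nabla_F^\ell v,\nabla_F^\ell\Pi_{k_\ell}^F v)_F$, and \eqref{eq:greenidentity} on $F$ splits this pairing into an interior term $(v,(-\Delta_F)^\ell\Pi_{k_\ell}^F v)_F$ and boundary terms $\sum_{i=0}^{\ell-1}(\nabla_F^i v,\nabla_F^i(-\Delta_F)^{\ell-i-1}\partial_\nu\Pi_{k_\ell}^F v)_{\partial F}$. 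Since $(-\Delta_F)^\ell\Pi_{k_\ell}^F v\in\mathbb P_{k_\ell-2\ell}(F)$, the interior term collapses to $(Q_{k_\ell-2\ell}^F v,(-\Delta_F)^\ell\Pi_{k_\ell}^F v)_F$ and is bounded via \eqref{eq:polyinverse} by $h_F^{-\ell}\|Q_{k_\ell-2\ell}^F v\|_{0,F}|\Pi_{k_\ell}^F v|_{\ell,F}$. On each $e\in\mathcal F^1(F)$ I expand $\nabla_F^i v|_e$ via \eqref{eq:20210413} (applied with $e$ as the codimension-one subface of $F$) into summands of the form $\frac{\partial^{|\beta|}}{\partial t_e^\beta}\big(\frac{\partial^{|\alpha|}v}{\partial\nu_{F,e}^\alpha}\big)$ with $|\alpha|+|\beta|=i$; the inverse inequality \eqref{eq:inverseinequality} on $V_{k_\ell-|\alpha|}^{\ell-|\alpha|}(e)$ (available by induction on dimension, cf.\ Lemma~\ref{lem:20210419}) absorbs the $|\beta|$ tangential derivatives at the cost of $h_e^{-|\beta|}\eqsim h_F^{-|\beta|}$, while the trace inequality \eqref{L2trace} combined with \eqref{eq:polyinverse} bounds the polynomial factor $\|\nabla_F^i(-\Delta_F)^{\ell-i-1}\partial_\nu\Pi_{k_\ell}^F v\|_{0,e}$ by $h_F^{-\ell+i+1/2}|\Pi_{k_\ell}^F v|_{\ell,F}$ (the polynomial inverse is legitimate because the relevant order $2\ell-i-1\geq\ell$ for $i\leq\ell-1$). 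Cauchy--Schwarz, Young's inequality, and $h_e\eqsim h_F$ then produce
\begin{equation*}
h_F^{2\ell}|\Pi_{k_\ell}^F v|_{\ell,F}^2\lesssim\|Q_{k_\ell-2\ell}^F v\|_{0,F}^2+\sum_{e\in\mathcal F^1(F)}\sum_{\alpha\in A_1,\,|\alpha|\leq\ell-1}h_F^{1+2|\alpha|}\Big\|\frac{\partial^{|\alpha|}v}{\partial\nu_{F,e}^\alpha}\Big\|_{0,e}^2.
\end{equation*}

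Finally, I invoke the inductive hypothesis \eqref{eq:20210420} on each edge $e$, applied to $w=\frac{\partial^{|\alpha|}v}{\partial\nu_{F,e}^\alpha}\in V_{k_\ell-|\alpha|}^{\ell-|\alpha|}(e)$, which expands $\|w\|_{0,e}^2$ into $\|Q_{k_\ell-2\ell+|\alpha|}^e w\|_{0,e}^2$, vertex contributions at $\mathcal F^{n-r-1}(e)\subseteq\mathcal F^{n-r}(F)$, and sub-edge contributions on $e'\in\mathcal F^s(e)\subseteq\mathcal F^{s+1}(F)$. Using that the basis $\{\nu_{F,e},\nu_{e,e',1},\dots,\nu_{e,e',s}\}$ spans the same normal space of $e'$ in $F$ as $\{\nu_{F,e',1},\dots,\nu_{F,e',s+1}\}$, the iterated normal derivatives in the sub-edge terms are rewritten as $\frac{\partial^{|\alpha|+|\alpha'|}v}{\partial\nu_{F,e'}^{\alpha+\alpha'}}\big|_{e'}$ up to a harmless orthogonal change of basis, and each $h_F$-scaled contribution slots into the correct position of \eqref{eq:20210421-4}. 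The main obstacle is precisely this last bookkeeping: verifying that the accumulated powers of $h_F$ (from the polynomial inverse on $F$, the trace to $e$, the inverse inequality on $V_{k_\ell-|\alpha|}^{\ell-|\alpha|}(e)$, and the recursive \eqref{eq:20210420}) combine into the exponent $h_F^{s+2|\alpha|}$ with $s$ the codimension of the relevant subface in $F$, and that the change of basis along the chain $e'\subset e\subset F$ is handled coherently.
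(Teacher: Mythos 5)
Your plan reproduces the paper's proof: reduce to $j=\ell$ via \eqref{eq:normequivalencevertices} together with the matching vertex moments in \eqref{eq:projlocal2}, expand $|\Pi_{k_\ell}^Fv|_{\ell,F}^2$ by the Green's identity \eqref{eq:greenidentity}, bound the interior term through $(-\Delta_F)^\ell\Pi_{k_\ell}^Fv\in\mathbb P_{k_\ell-2\ell}(F)$ and \eqref{eq:polyinverse}, expand boundary traces via \eqref{eq:20210413} and absorb tangential derivatives on $e$ with \eqref{eq:inverseinequality}, and finally feed the resulting $\|\partial^j_{\nu_{F,e}}v\|_{0,e}$ terms into the inductive estimate \eqref{eq:20210420} before re-indexing subfaces $e'\subset e$ as subfaces of $F$. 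This is the same sequence of steps and the same key inequalities as the paper's argument, and the bookkeeping of the $h_F$ powers and the normal-frame change along $e'\subset e\subset F$ that you flag as the main obstacle is handled in the paper exactly as you describe.
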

\begin{proof}
Due to \eqref{eq:normequivalencevertices}, it is sufficient to prove \eqref{eq:20210421-4} with $j=\ell$.
It follows from
\eqref{eq:greenidentity} and the inverse inequality for polynomials~\eqref{eq:polyinverse} that
\begin{align*}
|\Pi_{k_{\ell}}^Fv|_{\ell,F}^2&=(\nabla_F^{\ell}v, \nabla_F^{\ell}\Pi_{k_{\ell}}^Fv)_F \\
&=(v, (-\Delta_F)^{\ell}\Pi_{k_{\ell}}^Fv)_F+\sum_{i=0}^{\ell-1}\sum_{e\in\mathcal F^1(F)}(\nabla_F^{i}v, \nabla_F^{i}(-\Delta_F)^{\ell-i-1}\frac{\partial\Pi_{k_{\ell}}^Fv}{\partial\nu_{F,e}})_{e}  \\
&=(Q_{k_{\ell}-2\ell}^Fv, (-\Delta_F)^{\ell}\Pi_{k_{\ell}}^Fv)_F+\sum_{i=0}^{\ell-1}\sum_{e\in\mathcal F^1(F)}(\nabla_F^{i}v, \nabla_F^{i}(-\Delta_F)^{\ell-i-1}\frac{\partial\Pi_{k_{\ell}}^Fv}{\partial\nu_{F,e}})_{e} \\
&\lesssim \|Q_{k_{\ell}-2\ell}^Fv\|_{0,F}|\Pi_{k_{\ell}}^Fv|_{2\ell,F}+\sum_{i=0}^{\ell-1}\sum_{e\in\mathcal F^1(F)}\|\nabla_F^{i}v\|_{0,e}\|\nabla_F^{2\ell-i-1}\Pi_{k_{\ell}}^Fv\|_{0,e} \\
&\lesssim h_F^{-\ell}\|Q_{k_{\ell}-2\ell}^Fv\|_{0,F}|\Pi_{k_{\ell}}^Fv|_{\ell,F}+\sum_{i=0}^{\ell-1}\sum_{e\in\mathcal F^1(F)}h_F^{-\ell+i+1/2}\|\nabla_F^{i}v\|_{0,e}|\Pi_{k_{\ell}}^Fv|_{\ell,F}.
\end{align*}
Hence we have
\begin{equation}\label{eq:20210421-5}
h_F^{2\ell}|\Pi_{k_{\ell}}^Fv|_{\ell,F}^2\lesssim \|Q_{k_{\ell}-2\ell}^Fv\|_{0,F}^2 +\sum_{i=0}^{\ell-1}\sum_{e\in\mathcal F^1(F)}h_F^{2i+1}\|\nabla_F^{i}v\|_{0,e}^2.
\end{equation}
For $i=0,\cdots,\ell-1$ and each $e\in\mathcal F^1(F)$, it follows from \eqref{eq:20210413} and the inverse inequality \eqref{eq:inverseinequality} that
\begin{align*}
h_F^{2i+1}\|\nabla_F^{i}v\|_{0,e}^2&=h_F^{2i+1}\Bigg\|\sum_{\alpha\in A_{n-r-1},|\alpha|+j= i}\frac{i!}{j!\alpha!}\sym(\nu_{F,e}^{j}\otimes t_e^{\alpha})\frac{\partial^{i}v}{\partial t_e^{\alpha}\partial \nu_{F,e}^{j}}\Bigg\|_{0,e}^2 \\
&\lesssim \sum_{\alpha\in A_{n-r-1},|\alpha|+j= i}h_e^{2i+1}\bigg\|\frac{\partial^{i}v}{\partial t_e^{\alpha}\partial \nu_{F,e}^{j}}\bigg\|_{0,e}^2 \lesssim \sum_{j=0}^ih_e^{2j+1}\bigg\|\frac{\partial^{j}v}{\partial \nu_{F,e}^{j}}\bigg\|_{0,e}^2. 
\end{align*}
Noting that $\frac{\partial^{j}v}{\partial \nu_{F,e}^{j}}|_e\in V_{k_{\ell}-j}^{\ell-j}(e)$, we get from \eqref{eq:20210420} with $w=\frac{\partial^{j}v}{\partial \nu_{F,e}^{j}}|_e$ that
\begin{align}
&h_F^{2i+1}\|\nabla_F^{i}v\|_{0,e}^2\lesssim\sum_{j=0}^ih_e^{2j+1}\bigg\|Q_{k_{\ell}-2\ell+j}^e\frac{\partial^{j}v}{\partial \nu_{F,e}^{j}}\bigg\|_{0,e}^2 \notag\\
&\quad+\sum_{j=0}^{i}\sum_{s=1}^{n-r-1}\sum_{e'\in\mathcal F^{s}(e)}\sum_{\alpha\in A_{s}\atop |\alpha|\leq \ell-j-1}h_e^{s+1+2(|\alpha|+j)}\bigg\|Q_{k_{\ell}-2\ell+j+|\alpha|}^{e'}\frac{\partial^{|\alpha|+j}v}{\partial \nu_{e,e'}^{\alpha}\partial \nu_{F,e}^{j}}\bigg\|_{0,e'}^2. \label{eq:20210421-6}
\end{align}
Changing $e'\in\mathcal F^{s}(e)$ with $s=0,\cdots, n-r-1$ to $e'\in\mathcal F^{s}(F)$ with $s=1,\cdots, n-r$, we have
\begin{equation*}
h_F^{2i+1}\|\nabla_F^{i}v\|_{0,e}^2\lesssim  \sum_{j=0}^{i}\sum_{s=1}^{n-r}\sum_{e'\in\mathcal F^{s}(F)}\sum_{\alpha\in A_{s}, |\alpha|\leq \ell-1}h_F^{s+2|\alpha|}\bigg\|Q_{k_{\ell}-2\ell+|\alpha|}^{e'}\frac{\partial^{|\alpha|}v}{\partial \nu_{F,e'}^{\alpha}}\bigg\|_{0,e'}^2. 
\end{equation*}
Thus \eqref{eq:20210421-4} with $j=\ell$ follows from \eqref{eq:20210421-5}.
\end{proof}

\begin{lemma}\label{lem:20210420}
Let $F\in\mathcal F^r(K)$ with $r=0,1,\cdots, n-2$.
Assume \eqref{eq:20210420} holds on each $e\in \mathcal F^1(F)$ for any positive integer $\ell\leq m$, integer $k_{\ell}\geq\ell$.
Then we have for any positive integer $\ell\leq m$, integer $k_{\ell}\geq\ell$, $j\leq \ell$ and $v\in V_{k_{\ell}}^{\ell}(F)$ that
\begin{align}
h_F^{2j}|v|_{j,F}^2&\lesssim \|Q_{k_{\ell}-2\ell}^Fv\|_{0,F}^2 + \sum_{\delta\in\mathcal F^{n-r}(F)}\sum_{i=0}^{\ell-1}h_F^{n-r+2i}|\nabla_F^{i}v(\delta)|^2 \notag\\
&\quad
+ \sum_{s=1}^{n-r-1}\sum_{e'\in\mathcal F^{s}(F)}\sum_{\alpha\in A_{s}, |\alpha|\leq \ell-1}h_F^{s+2|\alpha|}\bigg\|Q_{k_{\ell}-2\ell+|\alpha|}^{e'}\frac{\partial^{|\alpha|}v}{\partial \nu_{F,e'}^{\alpha}}\bigg\|_{0,e'}^2\;\,\forall~v\in V_{k_{\ell}}^{\ell}(F). \label{eq:20210420-1}
\end{align}
\end{lemma}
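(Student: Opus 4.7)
The plan is to extend the bound \eqref{eq:20210421-4} on $|\Pi_{k_{\ell}}^F v|_{j,F}$ from the previous lemma to the full seminorm $|v|_{j,F}$. First I would use the norm equivalence \eqref{eq:normequivalencevertices} iteratively from $j$ up to $\ell$: each application trades a power of $h_F$ for a vertex-sum term, and all such vertex sums already appear on the right-hand side of \eqref{eq:20210420-1}. This reduces the claim to the single inequality $h_F^{2\ell}|v|_{\ell,F}^{2} \lesssim$ RHS.

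Next, the defining orthogonality \eqref{eq:projlocal1} of $\Pi_{k_{\ell}}^F$ gives the Pythagorean split $|v|_{\ell,F}^{2} = |\Pi_{k_{\ell}}^F v|_{\ell,F}^{2} + |w|_{\ell,F}^{2}$ with $w := v - \Pi_{k_{\ell}}^F v$. The projection term is controlled by the previous lemma with $j=\ell$. For $w$ I would exploit three structural properties simultaneously: $\Pi_{k_\ell}^F w = 0$ (so that the $F$-version of \eqref{eq:QkKrepresentkerpi} gives $Q_{k_\ell}^F w = Q_{k_\ell-2\ell}^F w$), the vanishing vertex sums $\sum_{\delta\in\mathcal{F}^{n-r}(F)} \nabla_F^i w(\delta)=0$ for $i=0,\dots,\ell-1$, and $(-\Delta_F)^{\ell}w\in\mathbb{P}_{k_\ell}(F)$.

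To estimate $|w|_{\ell,F}^2$ I would mirror Lemma~\ref{lem:20210419} by decomposing $w=w_0+w_b$, where $w_b\in H^{\ell}(F)$ is the $(-\Delta_F)^{\ell}$-harmonic extension of the traces of $w$ up to order $\ell-1$ and $w_0=w-w_b\in H_0^{\ell}(F)$. The bound \eqref{eq:20210527-3} applied to $w$, together with the trace inequality \eqref{L2tracemulti} applied to $\nabla_F^{\ell-1}\Pi_{k_\ell}^F v$ and the already-established control on $\|\nabla_F^{\ell-1}v\|_{0,\partial F}$ extracted from \eqref{eq:20210421-6} (plus the previous lemma for $|\Pi_{k_\ell}^F v|_{\ell-1,F}$ and $|\Pi_{k_\ell}^F v|_{\ell,F}$), yields $h_F^{2\ell}|w_b|_{\ell,F}^2\lesssim$ RHS. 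For the interior piece, integration by parts (valid since $w_0\in H_0^{\ell}(F)$) gives
\begin{equation*}
|w_0|_{\ell,F}^{2} = ((-\Delta_F)^{\ell}w,w_0)_F = (p,w)_F - (p,w_b)_F, \quad p:=(-\Delta_F)^{\ell}w\in\mathbb{P}_{k_\ell}(F).
\end{equation*}
The orthogonality $(w,q)_F=0$ for $q\in\mathbb{P}_{k_\ell-2\ell}^{\perp}(F)$ collapses $(p,w)_F$ to $(Q_{k_\ell-2\ell}^F p,\,Q_{k_\ell-2\ell}^F w)_F$, which I would bound by Cauchy--Schwarz, the polynomial inverse inequality \eqref{eq:polyinverse} controlling $\|p\|_{0,F}$, and the estimate $\|Q_{k_\ell-2\ell}^F w\|_{0,F}\leq \|Q_{k_\ell-2\ell}^F v\|_{0,F}+\|\Pi_{k_\ell}^F v\|_{0,F}\lesssim\sqrt{\text{RHS}}$ (the second piece being the previous lemma with $j=0$). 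The cross term $(p,w_b)_F$ is handled by a Green's identity applied to $w_b$ (which is smooth enough in the $(-\Delta_F)^{\ell}$-harmonic sense), combined with the boundary estimates already obtained for $w_b$.

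The main obstacle I anticipate is avoiding a circular estimate. Naively combining the inverse inequality \eqref{eq:inverseinequality} (which gives $h_F^{2\ell}|w|_{\ell,F}^{2}\lesssim\|w\|_{0,F}^{2}$) with the $L^{2}$-projection estimate \eqref{eq:l2projectionestimate} returns $h_F^{2\ell}|w|_{\ell,F}^{2}$ on the right with an uncontrolled constant, because these two inequalities are mutually tight on the finite-dimensional space. The resolution is to never expose the full $\|w\|_{0,F}$: one works with $p=(-\Delta_F)^{\ell}w$ as a polynomial and with the $L^{2}$-piece of $w$ only through $Q_{k_\ell-2\ell}^F w$, which the previous lemma has already absorbed into RHS, and pushes the ``high-frequency'' information of $w$ into the boundary terms via the $w=w_0+w_b$ decomposition.
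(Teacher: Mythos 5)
Your strategy differs substantively from the paper's. You first take a Pythagorean split $v=\Pi_{k_\ell}^F v + w$ and then build the polyharmonic extension $w_b$ of the traces of $w$, whereas the paper builds the polyharmonic extension $v^b$ of the traces of $v$ itself (exactly as in Lemma~\ref{lem:20210419}), writes $v=v^b+(v-v^b)$, and controls the interior part via
$|v-v^b|_{\ell,F}^2=\bigl((-\Delta_F)^{\ell}v,\,Q_{k_\ell}^F(v-v^b)\bigr)_F$,
then bounding $\|Q_{k_\ell}^F v\|_{0,F}$ through the representation \eqref{eq:QkKrepresent} and $\|v^b\|_{0,F}$ through \eqref{eq:20210420-4}. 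In the paper $\Pi_{k_\ell}^F$ appears only via \eqref{eq:QkKrepresent}, and the orthogonality constraint built into $V_{k_\ell}^{\ell}(F)$ is used only through that formula rather than through a direct split of $v$.

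The concrete gap in your plan is the cross-term $(p,w_b)_F$. Cauchy--Schwarz leaves you with $\|w_b\|_{0,F}$, and ``a Green's identity applied to $w_b$'' does not close it: integrating by parts against $p=(-\Delta_F)^{\ell}w$ generates boundary integrals of derivatives of $w$ up to order $2\ell-1$, which are not controlled by the right-hand side of \eqref{eq:20210420-1}. Unlike the seminorm $|w_b|_{\ell,F}$, the $L^2$-norm $\|w_b\|_{0,F}$ cannot be read off the boundary energy estimate \eqref{eq:20210527-3}; the paper's corresponding step \eqref{eq:20210420-4} hinges on the pointwise value $|v(\delta)|$ at a vertex, which is a degree of freedom appearing on the right-hand side. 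For your $w=v-\Pi_{k_\ell}^F v$ you would instead need $|w(\delta)|$, hence $|(\Pi_{k_\ell}^F v)(\delta)|$ and $\|\Pi_{k_\ell}^F v\|_{0,e}$ on every subface, which in turn require an extra layer of trace inequalities and repeated appeals to \eqref{eq:20210421-4}. This can in principle be carried out, but it is genuine additional work that your outline does not address, and the paper's argument circumvents it entirely by never isolating $w$.
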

\begin{proof}
Let $v^b\in H^{\ell}(F)$ be defined as in Lemma~\ref{lem:20210419}.
By \eqref{eq:20210527-3} and \eqref{eq:20210413}, we have
$$
|v^b|_{\ell,F}\lesssim h_F^{-1/2}\|\nabla_F^{\ell-1}v\|_{0,\partial F}\lesssim
h_F^{-1/2}\sum_{e\in\mathcal F^1(F)}\sum_{e\in\mathcal F^1(F)}\sum_{\alpha\in A_{n-r-1},\, j\in\mathbb N \atop |\alpha|+j= \ell-1}\bigg\|\frac{\partial^{\ell-1}v}{\partial t_{e}^{\alpha}\partial \nu_{F,e}^{j}}\bigg\|_{0,e},
$$
which together with the inverse inequality \eqref{eq:inverseinequality} yields
$$
h_F^{2\ell}|v^b|_{\ell,F}^2\lesssim \sum_{e\in\mathcal F^1(F)}\sum_{j=0}^{\ell-1}h_e^{2j+1}\bigg\|\frac{\partial^{j}v}{\partial \nu_{F,e}^{j}}\bigg\|_{0,e}^2.
$$
Similarly as \eqref{eq:20210421-6},
by $\frac{\partial^{j}v}{\partial \nu_{F,e}^{j}}|_e\in V_{k_{\ell}-j}^{\ell-j}(e)$, we get from \eqref{eq:20210420} with $w=\frac{\partial^{j}v}{\partial \nu_{F,e}^{j}}|_e$ that
\begin{align*}
&h_F^{2\ell}|v^b|_{\ell,F}^2
\lesssim \sum_{e\in\mathcal F^1(F)}\sum_{j=0}^{\ell-1}h_e^{2j+1}\bigg\|Q_{k_{\ell}-2\ell+j}^e\frac{\partial^{j}v}{\partial \nu_{F,e}^{j}}\bigg\|_{0,e}^2 \\
&+\sum_{e\in\mathcal F^1(F)}\sum_{j=0}^{\ell-1}\sum_{s=1}^{n-r-1}\sum_{e'\in\mathcal F^{s}(e)}\sum_{\alpha\in A_{s}\atop |\alpha|\leq \ell-j-1}h_e^{s+1+2(|\alpha|+j)}\bigg\|Q_{k_{\ell}-2\ell+j+|\alpha|}^{e'}\frac{\partial^{|\alpha|+j}v}{\partial \nu_{e,e'}^{\alpha}\partial \nu_{F,e}^{j}}\bigg\|_{0,e'}^2.
\end{align*}
Hence
\begin{align}
h_F^{2\ell}|v^b|_{\ell,F}^2&\lesssim  \sum_{s=1}^{n-r}\sum_{e'\in\mathcal F^{s}(F)}\sum_{\alpha\in A_{s}, |\alpha|\leq \ell-1}h_F^{s+2|\alpha|}\bigg\|Q_{k_{\ell}-2\ell+|\alpha|}^{e'}\frac{\partial^{|\alpha|}v}{\partial \nu_{F,e'}^{\alpha}}\bigg\|_{0,e'}^2. \label{eq:20210420-2}
\end{align}

Notice that $\nabla_F^i v^b|_{\partial F}=\nabla_F^i v|_{\partial F}$ for $i=0,1,\cdots, \ell-1$.
For $j=1,\cdots, \ell-1$,
\begin{align*}
|v^b|_{j,F}^2&=(\nabla_F^jv^b, \nabla_F^jv^b)_F=(\nabla_F^jv^b, \nabla_F(\nabla_F^{j-1}v^b-Q_0^F\nabla_F^{j-1}v^b))_F \\
&=-(\Delta_F\nabla_F^{j-1}v^b, \nabla_F^{j-1}v^b-Q_0^F\nabla_F^{j-1}v^b)_F \\
&\quad+\sum_{e\in\mathcal F^1(F)}(\frac{\partial}{\partial\nu_{F,e}}\nabla_F^{j-1}v, \nabla_F^{j-1}v^b-Q_0^F\nabla_F^{j-1}v^b)_{e} \\
&\lesssim |v^b|_{j+1,F}\|\nabla_F^{j-1}v^b-Q_0^F\nabla_F^{j-1}v^b\|_{0,F} \\
&\quad+\sum_{e\in\mathcal F^1(F)}\|\nabla_F^{j}v\|_{0,e}\|\nabla_F^{j-1}v^b-Q_0^F\nabla_F^{j-1}v^b\|_{0,e}.
\end{align*}
It follows from the last inequality and \eqref{eq:l2projectionestimate} that
$$
h_F^{j}|v^b|_{j,F}\lesssim h_F^{j+1}|v^b|_{j+1,F}+h_F^{j+1/2}\|\nabla_F^{j}v\|_{0,\partial F}\quad \textrm{ for } j=1,\cdots, \ell-1.
$$
Hence we achieve from the last inequality, \eqref{eq:20210420} and \eqref{eq:20210420-2} that 
\begin{align}
h_F^{2j}|v^b|_{j,F}^2&\lesssim \sum_{s=1}^{n-r}\sum_{e'\in\mathcal F^{s}(F)}\sum_{\alpha\in A_{s}, |\alpha|\leq \ell-1}h_F^{s+2|\alpha|}\bigg\|Q_{k_{\ell}-2\ell+|\alpha|}^{e'}\frac{\partial^{|\alpha|}v}{\partial \nu_{F,e'}^{\alpha}}\bigg\|_{0,e'}^2  \label{eq:20210420-3}
\end{align}
for $j=1,\cdots, \ell$.
Take some $\delta\in\mathcal F^{n-r}(F)$.
Applying the trace inequality \eqref{L2trace} recursively, \eqref{eq:l2projectionestimate} and the inverse inequality \eqref{eq:inverseinequality},
\begin{align*}
|(v^b-Q_0^Fv^b)(\delta)|^2&\lesssim \sum_{e\in\mathcal F^{n-r-1}(F)}h_e^{-1}\|v^b-Q_0^Fv^b\|_{0,e}^2+\sum_{e\in\mathcal F^{n-r-1}(F)}h_e|v^b|_{1,e}^2 \\
&\lesssim\cdots\lesssim h_F^{r-n}\|v^b-Q_0^Fv^b\|_{0,F}^2+\sum_{s=0}^{n-r-1}\sum_{e\in\mathcal F^{s}(F)}h_e^{r-n+s+2}|v^b|_{1,e}^2 \\
&\lesssim h_F^{r-n+2}|v^b|_{1,F}^2+ \sum_{s=1}^{n-r-1}\sum_{e\in\mathcal F^{s}(F)}h_e^{r-n+s}\|v\|_{0,e}^2.
\end{align*}
This implies
\begin{align*}
|Q_0^Fv^b|^2&=|(Q_0^Fv^b)(\delta)|^2\lesssim |v^b(\delta)|^2+|(v^b-Q_0^Fv^b)(\delta)|^2 \\
&\lesssim |v(\delta)|^2+h_F^{r-n+2}|v^b|_{1,F}^2+ \sum_{s=1}^{n-r-1}\sum_{e\in\mathcal F^{s}(F)}h_e^{r-n+s}\|v\|_{0,e}^2,
\end{align*}
which together with \eqref{eq:l2projectionestimate}, the trace inequality \eqref{L2trace} and the inverse inequality \eqref{eq:inverseinequality} means
\begin{align*}
\|v^b\|_{0,F}^2&\lesssim \|v^b-Q_0^Fv^b\|_{0,F}^2+\|Q_0^Fv^b\|_{0,F}^2\lesssim h_F^2|v^b|_{1,F}^2 + h_F^{n-r}|Q_0^Fv^b|^2  \\
&\lesssim h_F^{n-r}|v(\delta)|^2+h_F^{2}|v^b|_{1,F}^2+\sum_{s=1}^{n-r-1}\sum_{e\in\mathcal F^{s}(F)}h_e^{s}\|v\|_{0,e}^2 \\
&\lesssim h_F^{n-r}|v(\delta)|^2+h_F^{2}|v^b|_{1,F}^2+\sum_{e\in\mathcal F^{1}(F)}h_e\|v\|_{0,e}^2.
\end{align*}
Then we drive from \eqref{eq:20210420-3} and \eqref{eq:20210420} that 
\begin{align}
\|v^b\|_{0,F}^2&\lesssim \sum_{s=1}^{n-r}\sum_{e'\in\mathcal F^{s}(F)}\sum_{\alpha\in A_{s}, |\alpha|\leq \ell-1}h_F^{s+2|\alpha|}\bigg\|Q_{k_{\ell}-2\ell+|\alpha|}^{e'}\frac{\partial^{|\alpha|}v}{\partial \nu_{F,e'}^{\alpha}}\bigg\|_{0,e'}^2.  \label{eq:20210420-4}
\end{align}

On the other side,
it follows from \eqref{eq:20210421-1}, the inverse inequality for polynomials~\eqref{eq:polyinverse} and the fact $(-\Delta_F)^{\ell}v^b=0$ that
\begin{align*}
|v-v^b|_{\ell,F}^2&=((-\Delta_F)^{\ell}(v-v^b), v-v^b)_F=((-\Delta_F)^{\ell}(v-v^b), Q_{k_{\ell}}^F(v-v^b))_F \\
&\leq \|(-\Delta_F)^{\ell}(v-v^b)\|_{0,F}\|Q_{k_{\ell}}^F(v-v^b)\|_{0,F} \\
&\lesssim h_F^{-\ell}\|(-\Delta_F)^{\ell}(v-v^b)\|_{-\ell,F}(\|Q_{k_{\ell}}^Fv\|_{0,F} + \|v^b\|_{0,F}) \\
&\lesssim h_F^{-\ell}|v-v^b|_{\ell,F}(\|Q_{k_{\ell}}^Fv\|_{0,F} + \|v^b\|_{0,F}), 
\end{align*}
which yields
$$
h_F^{2\ell}|v-v^b|_{\ell,F}^2\lesssim \|Q_{k_{\ell}}^Fv\|_{0,F}^2 + \|v^b\|_{0,F}^2.
$$
Combined with the fact $v-v^b\in H_0^{\ell}(F)$ and the Poincar\'e-Friedrichs inequality~\eqref{eq:Poincare-Friedrichs2}, we get
$$
h_F^{2j}|v|_{j,F}^2\lesssim h_F^{2j}|v-v^b|_{j,F}^2+h_F^{2j}|v^b|_{j,F}^2\lesssim \|Q_{k_{\ell}}^Fv\|_{0,F}^2 + \|v^b\|_{0,F}^2+h_F^{2j}|v^b|_{j,F}^2
$$
for $j=0,\cdots, \ell$.
By \eqref{eq:QkKrepresent}, $Q_{k_{\ell}}^Fv=Q_{k_{\ell}-2\ell}^Fv+(I-Q_{k_{\ell}-2\ell}^F)\Pi_{k_{\ell}}^Fv$, hence
$$
h_F^{2j}|v|_{j,F}^2\lesssim \|Q_{k_{\ell}-2\ell}^Fv\|_{0,F}^2 + \|\Pi_{k_{\ell}}^Fv\|_{0,F}^2 + \|v^b\|_{0,F}^2+h_F^{2j}|v^b|_{j,F}^2.
$$
Finally we conclude \eqref{eq:20210420-1} from the last inequality, \eqref{eq:20210421-4}, \eqref{eq:20210420-4} and \eqref{eq:20210420-3}.
\end{proof}

\begin{lemma}[Norm equivalence of virtual element spaces]
For any $v\in V_{k}^{m}(K)$ and $j=0,\cdots, m$, we have 
\begin{align}
h_K^{2j}|v|_{j,K}^2&\lesssim \|Q_{k-2m}^Kv\|_{0,K}^2 + \sum_{\delta\in\mathcal F^{n}(K)}\sum_{i=0}^{m-1}h_K^{n+2i}|\nabla^{i}v(\delta)|^2 \notag\\
&\quad
+ \sum_{r=1}^{n-1}\sum_{F\in\mathcal F^{r}(K)}\sum_{\alpha\in A_{r}, |\alpha|\leq m-1}h_K^{r+2|\alpha|}\bigg\|Q_{k-2m+|\alpha|}^{F}\frac{\partial^{|\alpha|}v}{\partial \nu_{F}^{\alpha}}\bigg\|_{0,F}^2, \label{eq:normequivalence1}
\end{align}
and
\begin{align}
\|v\|_{0,K}^2&\eqsim \|Q_{k-2m}^Kv\|_{0,K}^2 + \sum_{\delta\in\mathcal F^{n}(K)}\sum_{i=0}^{m-1}h_K^{n+2i}|\nabla^{i}v(\delta)|^2 \notag\\
&\quad
+ \sum_{r=1}^{n-1}\sum_{F\in\mathcal F^{r}(K)}\sum_{\alpha\in A_{r}, |\alpha|\leq m-1}h_K^{r+2|\alpha|}\bigg\|Q_{k-2m+|\alpha|}^{F}\frac{\partial^{|\alpha|}v}{\partial \nu_{F}^{\alpha}}\bigg\|_{0,F}^2. \label{eq:normequivalence}
\end{align}
\end{lemma}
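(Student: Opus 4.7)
The first inequality \eqref{eq:normequivalence1}, for any $j \in \{0, \ldots, m\}$, is exactly the case $r = 0$, $F = K$, $\ell = m$, $k_\ell = k$ of the conclusion \eqref{eq:20210420-1} of Lemma~\ref{lem:20210420}, so the plan is to invoke that lemma directly. Its hypothesis \eqref{eq:20210420} must first be checked on every codim-$1$ face of $K$, which I propose to do by induction on the dimension $d$ of the face. At the base level $d = 1$, the space $V_{k_\ell}^\ell(e) = \mathbb P_{\max\{k_\ell, 2\ell-1\}}(e)$ is polynomial, and \eqref{eq:20210420} collapses to the classical norm equivalence on an interval whose right-hand side consists of the $L^2$ projection onto $\mathbb P_{k_\ell - 2\ell}(e)$ and the endpoint values of derivatives up to order $\ell - 1$; the count of these data matches $\dim V_{k_\ell}^\ell(e)$, so uni-solvence together with a standard scaling argument yields the equivalence. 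For the inductive step, if \eqref{eq:20210420} has been established on every face of dimension $d - 1$, then Lemma~\ref{lem:20210420} applied to a $d$-dim face delivers \eqref{eq:20210420-1} there, which is exactly \eqref{eq:20210420} at dimension $d$; iterating up to codim-$1$ faces of $K$ and applying the lemma once more at $r = 0$ yields \eqref{eq:normequivalence1}.

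For the equivalence \eqref{eq:normequivalence}, the upper bound $\|v\|_{0,K}^2 \lesssim \cdots$ is just \eqref{eq:normequivalence1} at $j = 0$. The reverse direction is obtained by bounding each summand on the right-hand side by $\|v\|_{0,K}^2$ individually. The first term $\|Q_{k-2m}^K v\|_{0,K}^2 \leq \|v\|_{0,K}^2$ is immediate from $L^2$ projection boundedness. For a vertex term, I observe that on any edge $e \in \mathcal F^{n-1}(K)$ incident to $\delta$ the identity \eqref{eq:20210413} writes $\nabla^i v|_e$ as a sum of tangential derivatives of the normal derivatives $\partial^{|\beta|}v/\partial\nu_e^\beta|_e \in V_{k-|\beta|}^{m-|\beta|}(e)$, which are polynomials on the one-dimensional $e$. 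The polynomial inverse inequality \eqref{eq:polyinverse} therefore gives $|\nabla^i v(\delta)|^2 \lesssim h_e^{-1}\|\nabla^i v\|_{0,e}^2$. Iterating the trace inequality \eqref{L2trace} down the chain $K \supset F_1 \supset \cdots \supset F_{n-2} \supset e$ and using the virtual element inverse inequality \eqref{eq:inverseinequality} at each stage to absorb the appearing derivatives, one accumulates a factor $h_K^{-(n-1)-2i}$ that combines with the prefactor $h_K^{n+2i}$ to produce $h_K^{n+2i}|\nabla^i v(\delta)|^2 \lesssim \|v\|_{0,K}^2$. For a face projection term, $Q_{k-2m+|\alpha|}^F$ is non-expansive on $F$, and the same style of iterated trace from $K$ to $F$ across $r$ codimensions, combined with \eqref{eq:inverseinequality}, yields the matching bound against the prefactor $h_K^{r+2|\alpha|}$.

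The main technical obstacle is the iterated trace argument for the reverse direction: a blunt application of \eqref{L2trace} to $w = \partial^{|\alpha|}v/\partial\nu_F^\alpha$ across $r$ codimensions seems to require $w \in H^r(K)$, that is, $v \in H^{|\alpha|+r}(K)$, yet we only have $v \in H^m(K)$ with $|\alpha| + r$ potentially exceeding $m$. The key device to evade this is to carry out the iteration face-by-face along the chain $K = F_0 \supset F_1 \supset \cdots \supset F_r = F$, invoking at each intermediate $F_s$ the fact that the restriction $\partial^{|\alpha|}v/\partial\nu_{F_s}^\alpha|_{F_s}$ lies in the virtual element space $V_{k-|\alpha|}^{m-|\alpha|}(F_s)$ by the recursive definition of the element, so that the inverse inequality \eqref{eq:inverseinequality} on $F_s$ controls all higher derivatives appearing in the trace inequality by the $L^2(F_s)$ norm alone. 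This keeps the iteration closed without ever demanding more than $H^m$-regularity on $K$, and closes the argument.
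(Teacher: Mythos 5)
Your proposal follows the paper's own proof essentially verbatim: \eqref{eq:normequivalence1} is obtained by invoking Lemma~\ref{lem:20210420} via mathematical induction on the dimension of the face, with the one-dimensional polynomial space $\mathbb P_{\max\{k_\ell,2\ell-1\}}(e)$ as the base case, and the lower bound in \eqref{eq:normequivalence} is obtained by iterating the trace inequality \eqref{L2trace} together with the inverse inequality \eqref{eq:inverseinequality} up the chain of faces, exactly as in the paper, with your closing discussion of the ``technical obstacle'' being precisely the content of the paper's phrase ``applying \eqref{L2trace} and \eqref{eq:inverseinequality} recursively.'' One small citation quibble: the pointwise bound $|\nabla^i v(\delta)|^2 \lesssim h_e^{-1}\|\nabla^i v\|_{0,e}^2$ on a $1$-dimensional edge is a discrete trace estimate (in the paper it is one more instance of \eqref{L2trace} followed by \eqref{eq:inverseinequality}), not an instance of the negative-norm polynomial inverse inequality \eqref{eq:polyinverse} as you wrote.
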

\begin{proof}
Clearly the inequality \eqref{eq:normequivalence1} holds for $n=1$ since $V_{k}^{m}(K)=\mathbb P_{\max\{k, 2m-1\}}(K)$.
Then we obtain \eqref{eq:normequivalence1} for general $n$ from 
Lemma~\ref{lem:20210420} and the mathematical induction.

For the norm equivalence \eqref{eq:normequivalence}, due to \eqref{eq:normequivalence1} with $j=0$, it is sufficient to prove
\begin{align}
&\|Q_{k-2m}^Kv\|_{0,K}^2 + \sum_{\delta\in\mathcal F^{n}(K)}\sum_{i=0}^{m-1}h_K^{n+2i}|\nabla^{i}v(\delta)|^2 \notag\\
&\quad
+ \sum_{r=1}^{n-1}\sum_{F\in\mathcal F^{r}(K)}\sum_{\alpha\in A_{r}, |\alpha|\leq m-1}h_K^{r+2|\alpha|}\bigg\|Q_{k-2m+|\alpha|}^{F}\frac{\partial^{|\alpha|}v}{\partial \nu_{F}^{\alpha}}\bigg\|_{0,F}^2 \lesssim \|v\|_{0,K}^2. \label{eq:20210421-2}
\end{align}
Applying the trace inequality \eqref{L2trace} and the inverse inequality \eqref{eq:inverseinequality} recursively,
\begin{align*}
\sum_{\delta\in\mathcal F^{n}(K)}\sum_{i=0}^{m-1}h_K^{n+2i}|\nabla^{i}v(\delta)|^2&\lesssim \sum_{e\in\mathcal F^{n-1}(K)}\sum_{i=0}^{m-1}h_K^{n+2i}(h_e^{-1}\|\nabla^{i}v\|_{0,e}^2 + h_e |\nabla^{i}v|_{1,e}^2) \\
&\lesssim \sum_{e\in\mathcal F^{n-1}(K)}\sum_{i=0}^{m-1}h_K^{n-1+2i}\|\nabla^{i}v\|_{0,e}^2 \\
&\lesssim \cdots \lesssim \sum_{F\in\mathcal F^{1}(K)}\sum_{i=0}^{m-1}h_K^{1+2i}\|\nabla^{i}v\|_{0,F}^2 \\
& \lesssim \sum_{i=0}^{m-1}h_K^{2i}\|\nabla^{i}v\|_{0,K}^2 \lesssim \|v\|_{0,K}^2.
\end{align*}
Similarly we have
\begin{align*}
&\quad \sum_{r=1}^{n-1}\sum_{F\in\mathcal F^{r}(K)}\sum_{\alpha\in A_{r}, |\alpha|\leq m-1}h_K^{r+2|\alpha|}\bigg\|Q_{k-2m+|\alpha|}^{F}\frac{\partial^{|\alpha|}v}{\partial \nu_{F}^{\alpha}}\bigg\|_{0,F}^2 \\
&\leq \sum_{r=1}^{n-1}\sum_{F\in\mathcal F^{r}(K)}\sum_{i=0}^{m-1}h_K^{r+2i}\|\nabla^{i}v\|_{0,F}^2\lesssim \|v\|_{0,K}^2.
\end{align*}
Thus \eqref{eq:20210421-2} follows from the last two inequalities and $\|Q_{k-2m}^Kv\|_{0,K}\leq \|v\|_{0,K}$.
\end{proof}

Now we present the norm equivalences of the kernel space of the local $L^2$-projector $Q_k^K$ and
the local $H^m$-projector $\Pi_k^K$, which only involve the boundary DoFs.

\begin{lemma}[Norm equivalence of the kernel space of $Q_k^K$]
For any $v\in \ker(Q_k^K)\cap V_{k}^{m}(K)$ and $j=0,\cdots, m$, we have 
\begin{align}
h_K^{2j}|v|_{j,K}^2&\eqsim \sum_{r=1}^{n}\sum_{F\in\mathcal F^{r}(K)}\sum_{\alpha\in A_{r}, |\alpha|\leq m-1}h_K^{r+2|\alpha|}\bigg\|Q_{k-2m+|\alpha|}^{F}\frac{\partial^{|\alpha|}v}{\partial \nu_{F}^{\alpha}}\bigg\|_{0,F}^2. \label{eq:normequivalence-kerQk}
\end{align}
\end{lemma}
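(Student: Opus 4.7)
The plan is to reduce the statement to the already-established norm equivalence \eqref{eq:normequivalence} on the full space $V_k^m(K)$, together with an inverse inequality and a reverse Poincar\'e-type estimate that both become available once $Q_k^Kv=0$.

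First I would observe that for $v\in \ker(Q_k^K)\cap V_k^m(K)$ the interior term in \eqref{eq:normequivalence} disappears: since $k-2m\le k$, the identity $Q_{k-2m}^K=Q_{k-2m}^KQ_k^K$ gives $Q_{k-2m}^Kv=0$. Applying \eqref{eq:normequivalence} to such $v$ therefore yields
\begin{equation*}
\|v\|_{0,K}^2\eqsim \sum_{\delta\in\mathcal F^{n}(K)}\sum_{i=0}^{m-1}h_K^{n+2i}|\nabla^{i}v(\delta)|^2
+ \sum_{r=1}^{n-1}\sum_{F\in\mathcal F^{r}(K)}\sum_{\alpha\in A_{r},|\alpha|\leq m-1}h_K^{r+2|\alpha|}\bigg\|Q_{k-2m+|\alpha|}^{F}\tfrac{\partial^{|\alpha|}v}{\partial \nu_{F}^{\alpha}}\bigg\|_{0,F}^2,
\end{equation*}
which, once one identifies the $r=n$ contributions in the right-hand side of \eqref{eq:normequivalence-kerQk} as the vertex degrees of freedom (using $\nu_{\delta,i}=e_i$ and the convention that $\|\cdot\|_{0,\delta}$ is evaluation), is exactly the $j=0$ case.

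Next I would reduce the case $1\le j\le m$ to the $j=0$ case by proving the two-sided bound $h_K^{2j}|v|_{j,K}^2\eqsim \|v\|_{0,K}^2$ for $v\in\ker(Q_k^K)\cap V_k^m(K)$. The upper bound is immediate from the inverse inequality \eqref{eq:inverseinequality}, which gives $h_K^{j}|v|_{j,K}\lesssim \|v\|_{0,K}$. For the lower bound, I would use the $L^2$-projection estimate \eqref{eq:l2projectionestimate} with $i=0$, namely $\|v-Q_k^Kv\|_{0,K}\lesssim h_K^j|v|_{j,K}$, valid for $0\le j\le k+1$; since $k\ge m\ge j$ and $Q_k^Kv=0$, this specializes to $\|v\|_{0,K}\lesssim h_K^{j}|v|_{j,K}$. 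Combining the two bounds with the $j=0$ equivalence above immediately yields \eqref{eq:normequivalence-kerQk} for every $0\le j\le m$.

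I do not anticipate any serious obstacle: the only mild subtlety is matching the $r=n$ summand of \eqref{eq:normequivalence-kerQk} (summation over $\alpha\in A_n$ with $|\alpha|\le m-1$ of the squared pointwise values $|\partial^\alpha v(\delta)|^2$) with the $\sum_{i=0}^{m-1}|\nabla^{i}v(\delta)|^2$ appearing in \eqref{eq:normequivalence}, which follows from the elementary fact that $|\nabla^{i}v(\delta)|^2\eqsim \sum_{\alpha\in A_n,|\alpha|=i}|\partial^{\alpha}v(\delta)|^2$, together with the conventions $\nu_{\delta,i}=e_i$ and $Q_{k-2m+|\alpha|}^{\delta}$ being point evaluation.
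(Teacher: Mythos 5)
Your proposal is correct and matches the paper's proof essentially verbatim: both reduce to \eqref{eq:normequivalence} after observing $Q_{k-2m}^Kv=0$, then obtain $h_K^j|v|_{j,K}\eqsim\|v\|_{0,K}$ from the inverse inequality \eqref{eq:inverseinequality} (upper bound) together with \eqref{eq:l2projectionestimate} applied with $Q_k^Kv=0$ (lower bound). The additional remark you make about identifying the $r=n$ summand with the vertex terms via the conventions $\nu_{\delta,i}=e_i$ and point-evaluation is a detail the paper leaves implicit, but it does not change the argument.
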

\begin{proof}
Noting that $Q_{k-2m}^Kv=0$, we achieve from \eqref{eq:normequivalence} that
\begin{equation*}
\|v\|_{0,K}^2\eqsim \sum_{r=1}^{n}\sum_{F\in\mathcal F^{r}(K)}\sum_{\alpha\in A_{r}, |\alpha|\leq m-1}h_K^{r+2|\alpha|}\bigg\|Q_{k-2m+|\alpha|}^{F}\frac{\partial^{|\alpha|}v}{\partial \nu_{F}^{\alpha}}\bigg\|_{0,F}^2. 
\end{equation*}
On the other side, we get from the inverse inequality \eqref{eq:inverseinequality} and \eqref{eq:l2projectionestimate} that
$$
h_K^j|v|_{j,K}\eqsim \|v\|_{0,K}\quad\forall~j=0,\cdots, m.
$$
Therefore \eqref{eq:normequivalence-kerQk} holds.
\end{proof}

Noting that $\sum\limits_{\delta\in\mathcal F^{n}(K)}(\nabla^{j}v)(\delta)=0$ for
any $v\in \ker(\Pi_k^K)\cap V_{k}^{m}(K)$ and $j=0,1,\cdots, m$,
it holds from \eqref{eq:normequivalencevertices} that 
\begin{equation}\label{eq:normequivalence-kerpi0}
h_K^{j}|v|_{j,K}\eqsim h_K^{j+1}|v|_{j+1,K} \quad\forall~v\in\ker(\Pi_k^K)\cap V_{k}^{m}(K), j=0,1,\cdots, m-1.
\end{equation}

\begin{lemma}[Norm equivalence of the kernel space of $\Pi_k^K$]
For any $v\in \ker(\Pi_k^K)\cap V_{k}^{m}(K)$ and $j=0,\cdots, m$, we have 
\begin{align}
h_K^{2j}|v|_{j,K}^2&\eqsim \sum_{r=1}^{n}\sum_{F\in\mathcal F^{r}(K)}\sum_{\alpha\in A_{r}, |\alpha|\leq m-1}h_K^{r+2|\alpha|}\bigg\|Q_{k-2m+|\alpha|}^{F}\frac{\partial^{|\alpha|}v}{\partial \nu_{F}^{\alpha}}\bigg\|_{0,F}^2. \label{eq:normequivalence-kerpi1}
\end{align}
\end{lemma}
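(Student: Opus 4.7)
The plan is to exploit that the right-hand side of \eqref{eq:normequivalence-kerpi1} does not depend on $j$, while \eqref{eq:normequivalence-kerpi0} already shows all seminorms $h_K^{2j}|v|_{j,K}^2$ for $j=0,\ldots,m$ are equivalent. Hence I would reduce to the case $j=0$ and establish that $\|v\|_{0,K}^2$ is equivalent to the stated right-hand side. The direction $\gtrsim$ is routine: iterating the trace inequality \eqref{L2trace} $r$ times to descend from $K$ to any codimension-$r$ face and applying the inverse inequality \eqref{eq:inverseinequality} gives $h_K^{r+2|\alpha|}\|\nabla^{|\alpha|}v\|_{0,F}^2\lesssim \|v\|_{0,K}^2$ for each summand, so summing over the finitely many faces yields the upper bound.

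For the harder direction $\lesssim$, I would start from \eqref{eq:normequivalence1} at $j=0$, whose vertex contributions (identified with the $r=n$ part of the target sum via the convention that $Q^{\delta}_\cdot$ is point evaluation) and whose face contributions for $r=1,\ldots,n-1$ already appear on the target right-hand side. The only remaining quantity to control is $\|Q_{k-2m}^Kv\|_{0,K}^2$. Here the hypothesis $\Pi_k^Kv=0$ enters decisively: it forces $(\nabla^m v,\nabla^m q)_K=0$ for every $q\in\mathbb P_k(K)$, and Green's identity \eqref{eq:greenidentity} then yields
$$
(v,(-\Delta)^m q)_K = -\sum_{i=0}^{m-1}(\nabla^i v,\nabla^i(-\Delta)^{m-i-1}\partial_\nu q)_{\partial K}\qquad \forall\, q\in\mathbb P_k(K).
$$
Because $(-\Delta)^m\colon\mathbb P_k(K)\to\mathbb P_{k-2m}(K)$ is surjective, a scaling argument on a reference element (choosing the representative orthogonal to $\ker(-\Delta)^m$) supplies $q\in\mathbb P_k(K)$ with $(-\Delta)^m q=Q_{k-2m}^Kv$ and $\|q\|_{0,K}\lesssim h_K^{2m}\|Q_{k-2m}^Kv\|_{0,K}$. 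The polynomial inverse inequality \eqref{eq:polyinverse} together with the trace inequality \eqref{L2trace} then yield $\|\nabla^i(-\Delta)^{m-i-1}\partial_\nu q\|_{0,\partial K}\lesssim h_K^{i+1/2}\|Q_{k-2m}^Kv\|_{0,K}$, and inserting this choice of $q$ into the identity above (with $(v,Q_{k-2m}^Kv)_K=\|Q_{k-2m}^Kv\|_{0,K}^2$) followed by Cauchy--Schwarz and an absorption produces
$$
\|Q_{k-2m}^Kv\|_{0,K}^2\;\lesssim\;\sum_{i=0}^{m-1}h_K^{2i+1}\|\nabla^i v\|_{0,\partial K}^2.
$$

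To finish I would convert these boundary norms into terms of the target sum. On each $F\in\mathcal F^1(K)$, the decomposition \eqref{eq:20210413} expresses $\nabla^i v|_F$ through tangential derivatives of $\partial^{|\alpha|}v/\partial\nu_F^\alpha\in V_{k-|\alpha|}^{m-|\alpha|}(F)$ for $\alpha\in A_1$ with $|\alpha|\leq i$. The tangential derivatives are absorbed by the inverse inequality \eqref{eq:inverseinequality} on $F$, and the already-established face norm equivalence \eqref{eq:normequivalence} applied to each $\partial^{|\alpha|}v/\partial\nu_F^\alpha$ on $F$ produces a $Q^F$-projection term on $F$ together with subface contributions on $e'\in\mathcal F^s(F)$. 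Relabeling subfaces of $F\in\mathcal F^1(K)$ as elements of $\mathcal F^{1+s}(K)$ and using $h_F\eqsim h_K$ to regroup powers of $h_K$ assembles all these contributions into precisely the sum over $r=1,\ldots,n$ on the right-hand side of \eqref{eq:normequivalence-kerpi1}.

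The main obstacle I foresee is the scaling bound $\|q\|_{0,K}\lesssim h_K^{2m}\|(-\Delta)^m q\|_{0,K}$ for the preimage under the polyharmonic operator, which requires pinning down a canonical representative in $\mathbb P_k(K)/\ker((-\Delta)^m)$ and justifying the bound by finite-dimensional norm equivalence on a reference element, transferred to $K$ via the shape-regularity assumption (A1); the remainder of the argument is bookkeeping of boundary contributions through the recursive face hierarchy.
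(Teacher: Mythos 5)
Your argument is correct in substance but takes a genuinely different route from the paper's. The paper reduces to the case $j=m$, decomposes $v=(v-v^b)+v^b$ with $v^b$ the polyharmonic extension of the boundary data, bounds $v^b$ by \eqref{eq:20210420-3}--\eqref{eq:20210420-4}, and bounds $v-v^b$ by introducing a polynomial $p\in\mathbb P_k(K)$ with $(-\Delta)^mp=Q_{k-2m}^K((-\Delta)^mv)$ (Lemma 4.8 of \cite{Huang2020}) and exploiting the key identity $((-\Delta)^m(v-p),v)_K=0$, which hinges on \eqref{eq:QkKrepresentkerpi}. You instead reduce to $j=0$, invoke \eqref{eq:normequivalence1} to isolate the troublesome term $\|Q_{k-2m}^Kv\|_{0,K}^2$, and dispatch it by feeding the test function $q\in\mathbb P_k(K)$ with $(-\Delta)^mq=Q_{k-2m}^Kv$ directly into the Green's identity \eqref{eq:greenidentity}, using $\Pi_k^Kv=0\Rightarrow(\nabla^mv,\nabla^mq)_K=0$ to obtain $\|Q_{k-2m}^Kv\|_{0,K}^2=-\sum_{i}(\nabla^iv,\nabla^i(-\Delta)^{m-i-1}\partial_\nu q)_{\partial K}$. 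Your route is arguably more transparent — it sidesteps the $v^b$/$p$ decomposition and the computation of $|v-v^b|_{m,K}^2$ — at the cost of having to convert $\sum_i h_K^{2i+1}\|\nabla^iv\|_{0,\partial K}^2$ into the target sum afterward; this conversion (via \eqref{eq:20210413}, the inverse inequality on faces, and the face-level $L^2$ norm equivalence) is exactly the bookkeeping carried out in the proof of Lemma~\ref{lem:20210420} and would need to be spelled out rather than relegated to a remark. Two small inaccuracies worth fixing: you cite \eqref{eq:normequivalence}, which is stated only for $V_k^m(K)$, where you actually need the face-level analogue \eqref{eq:20210420-1}; and the scaling bound $\|q\|_{0,K}\lesssim h_K^{2m}\|(-\Delta)^mq\|_{0,K}$ for the polyharmonic pseudo-inverse on polynomials is essentially the content of Lemma 4.8 in \cite{Huang2020} (stated there as $|p|_{m,K}\lesssim h_K^m\|\cdot\|_{0,K}$, which together with \eqref{eq:polyinverse} and \eqref{eq:Poincare-Friedrichs2} implies your form), so you need not re-derive it from scratch; for a general star-shaped $K$ the naive affine rescaling to a single reference element does not apply and one must argue via a fixed ball as in that lemma.
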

\begin{proof}
Due to \eqref{eq:normequivalence} and \eqref{eq:normequivalence-kerpi0}, 
it is sufficient to prove
\begin{equation}\label{eq:20210422-1}
h_K^{2m}|v|_{m,K}^2\lesssim \sum_{r=1}^{n}\sum_{F\in\mathcal F^{r}(K)}\sum_{\alpha\in A_{r}, |\alpha|\leq m-1}h_K^{r+2|\alpha|}\bigg\|Q_{k-2m+|\alpha|}^{F}\frac{\partial^{|\alpha|}v}{\partial \nu_{F}^{\alpha}}\bigg\|_{0,F}^2. 
\end{equation}

Let $v^b\in H^{m}(K)$ be defined as in Lemma~\ref{lem:20210419} with $r=0$, $\ell=m$ and $k_{\ell}=k$.
By~\eqref{eq:20210420-3} and \eqref{eq:20210420-4}, 
we have
\begin{equation}\label{eq:20210422-2}
h_K^{2j}|v^b|_{j,K}^2\lesssim \sum_{r=1}^{n}\sum_{F\in\mathcal F^{r}(K)}\sum_{\alpha\in A_{r}, |\alpha|\leq m-1}h_K^{r+2|\alpha|}\bigg\|Q_{k-2m+|\alpha|}^{F}\frac{\partial^{|\alpha|}v}{\partial \nu_{F}^{\alpha}}\bigg\|_{0,F}^2   
\end{equation}
for $j=0,\cdots, m$.
On the other hand, according to Lemma 4.8 in \cite{Huang2020},
there exists $p\in\mathbb P_{k}(K)$ satisfying
\begin{equation}\label{eqn:20181113-2}
(-\Delta)^mp=Q_{k-2m}^K((-\Delta)^mv),
\end{equation}
\begin{equation}\label{eq:20190222-1}
|p|_{m,K}\lesssim h_K^{m}\|Q_{k-2m}^K((-\Delta)^mv)\|_{0,K}\leq h_K^{m}\|(-\Delta)^mv\|_{0,K}\lesssim |v|_{m,K}.
\end{equation}
Noting that $(\nabla^mp, \nabla^mv)_K=0$, $v-v^b\in H_0^m(K)$ and $(-\Delta)^mv^b=0$,
\begin{align*}
|v-v^b|_{m,K}^2&=(\nabla^m(v-v^b), \nabla^m(v-v^b))_K \\
&=(\nabla^m(v-v^b-p), \nabla^m(v-v^b))_K-(\nabla^mp, \nabla^mv^b)_K \\
&=((-\Delta)^m(v-v^b-p), v-v^b)_K-(\nabla^mp, \nabla^mv^b)_K  \\
&=((-\Delta)^m(v-p), v)_K-((-\Delta)^m(v-v^b-p), v^b)_K-(\nabla^mp, \nabla^mv^b)_K. 
\end{align*}
For the first term in the right hand side of the last equation,
it follows from \eqref{eq:QkKrepresentkerpi} and \eqref{eqn:20181113-2} that
\begin{align*}
((-\Delta)^m(v-p), v)_K&=((-\Delta)^m(v-p), Q_k^Kv)_K=((-\Delta)^m(v-p), Q_{k-2m}^Kv)_K \\
&=(Q_{k-2m}^K((-\Delta)^m(v-p)), v)_K=0.
\end{align*}
Then we acquire from the inverse inequality for polynomials~\eqref{eq:polyinverse} and \eqref{eq:20190222-1} that
\begin{align*}
|v-v^b|_{m,K}^2&\leq \|(-\Delta)^m(v-v^b-p)\|_{0,K}\|v^b\|_{0,K}+|p|_{m,K}|v^b|_{m,K} \\
&\lesssim h_K^{-m}(|v-v^b|_{m,K}+|p|_{m,K})\|v^b\|_{0,K}+|p|_{m,K}|v^b|_{m,K} \\
&\lesssim h_K^{-m}(|v-v^b|_{m,K}+|v|_{m,K})\|v^b\|_{0,K}+|v|_{m,K}|v^b|_{m,K} \\
&\lesssim h_K^{-m}|v-v^b|_{m,K}\|v^b\|_{0,K}+|v|_{m,K}(h_K^{-m}\|v^b\|_{0,K}+|v^b|_{m,K}),
\end{align*}
which gives
$$
|v-v^b|_{m,K}^2\lesssim h_K^{-2m}\|v^b\|_{0,K}^2+|v|_{m,K}(h_K^{-m}\|v^b\|_{0,K}+|v^b|_{m,K}).
$$
Hence
\begin{align*}
h_K^{2m}|v|_{m,K}^2&\lesssim h_K^{2m}|v-v^b|_{m,K}^2+h_K^{2m}|v^b|_{m,K}^2 \\
&\lesssim h_K^{m}|v|_{m,K}(\|v^b\|_{0,K}+h_K^{m}|v^b|_{m,K})+\|v^b\|_{0,K}^2+h_K^{2m}|v^b|_{m,K}^2,   
\end{align*}
which means
$$
h_K^{2m}|v|_{m,K}^2\lesssim\|v^b\|_{0,K}^2+h_K^{2m}|v^b|_{m,K}^2.
$$
Therefore \eqref{eq:20210422-1} holds from \eqref{eq:20210422-2}.
\end{proof}

\section{Conforming Virtual Element Method for a polyharmonic equation}\label{sec:vempolyharmonic}

In this section we will adopt the constructed conforming virtual elements to discretize the following polyharmonic equation with a lower order term: find $u\in H_0^m(\Omega)$ such that
\begin{equation}\label{eq:polyharmonic}
(\nabla^mu, \nabla^mv)+c(u, v)=(f, v)\quad\forall~v\in H_0^m(\Omega),
\end{equation}
where $f\in L^2(\Omega)$ and constant $c\geq0$.

\subsection{Conforming virtual element method}
Let the global virtual element space
$$
V_h:=\{v_h\in H_0^m(\Omega): v_h|_K\in V_{k}^{m}(K) \textrm{ for each } K\in\mathcal T_h\}.
$$

To define the discrete bilinear form, we first introduce the stabilization term
$$
S_K(w,v):=\sum_{r=1}^{n}\sum_{F\in\mathcal F^{r}(K)}\sum_{\alpha\in A_{r}\atop |\alpha|\leq m-1}h_K^{r+2|\alpha|-2m}\bigg(Q_{k-2m+|\alpha|}^{F}\frac{\partial^{|\alpha|}w}{\partial \nu_{F}^{\alpha}},Q_{k-2m+|\alpha|}^{F}\frac{\partial^{|\alpha|}v}{\partial \nu_{F}^{\alpha}}\bigg)_{F}
$$
for any $w,v\in V_{k}^{m}(K)$. When $k=m$, the stabilization term will reduce to
$$
S_K(w,v)=\sum_{\delta\in\mathcal F^n(K)}\sum_{j=0}^{m-1}h_K^{n+2j-2m}(\nabla^jw)(\delta):(\nabla^jv)(\delta).
$$
By 
\eqref{eq:normequivalence-kerpi1}, we acquire the norm equivalence of the stabilization term
\begin{equation}\label{eq:normequivalence-kerpikstab}
S_K(v-\Pi_k^Kv,v-\Pi_k^Kv)\eqsim |v-\Pi_k^Kv|_{m,K}^2\quad\forall~v\in V_{k}^{m}(K).
\end{equation}

Next define the linear form $a_h(\cdot, \cdot): V_h\times V_h\to\mathbb R$ as
$$
a_h(w_h, v_h):=\sum_{K\in\mathcal T_h}a_{h,K}(w_h, v_h),
$$
where
\begin{align*}
a_{h,K}(w_h, v_h):=&\,(\nabla^m\Pi_k^Kw_h, \nabla^m\Pi_k^Kv_h)_K+S_K(w_h-\Pi_k^Kw_h,v_h-\Pi_k^Kv_h) \\
&+c(Q_k^Kw_h, Q_k^Kv_h)_K.
\end{align*}
Clearly we obtain from \eqref{eq:localprojprop1} and \eqref{eq:projlocal1} that
\begin{equation}\label{eq:ahKprop1}
a_{h,K}(v, q)=(\nabla^mv, \nabla^m q)_K+c(v,  q)_K\quad\forall~v\in V_{k}^{m}(K), q\in \mathbb P_{k}(K).   
\end{equation}

\begin{lemma}
We have
\begin{equation}\label{eq:ahkcoercivity}
a_{h,K}(w, v)\lesssim (|w|_{m,K}+\|w\|_{0,K})(|v|_{m,K}+\|v\|_{0,K})\quad\forall~w,v\in V_{k}^{m}(K),
\end{equation}
\begin{equation}\label{eq:ahcoercivity}
a_{h}(v_h, v_h)\eqsim |v_h|_{m}^2\quad\forall~v_h\in V_h.   
\end{equation}
\end{lemma}
\begin{proof}
Let $v\in V_{k}^{m}(K)$.
By \eqref{eq:normequivalence-kerpikstab}, we get
\[
a_{h,K}(v, v)\eqsim |v|_{m,K}^2+c\|Q_k^Kv\|_{0,K}^2,
\]
which implies \eqref{eq:ahkcoercivity}, and \eqref{eq:ahcoercivity} by the Poincar\'e inequality.
\end{proof}


With previous preparations, we propose the following conforming virtual element method for the polyharmonic equation \eqref{eq:polyharmonic}: find $u_h\in V_h$ such that
\begin{equation}\label{Hmcfmvem}
a_{h}(u_h, v_h)=\langle f, v_h\rangle\quad\forall~v_h\in V_h,
\end{equation}
where $\langle f, v_h\rangle:=\sum\limits_{K\in\mathcal T_h}(f, Q_k^Kv_h)_K$.
Due to \eqref{eq:ahcoercivity}, the virtual element method \eqref{Hmcfmvem} is well-posed.

\subsection{Interpolation error estimate}
To derive the error estimate of the virtual element method \eqref{Hmcfmvem},
we define an interpolation operator $I_h: H_0^m(\Omega)\to V_h$ based on the DoFs \eqref{eq:cfmvemdof1}-\eqref{eq:cfmvemdof3}: for any $v\in H_0^m(\Omega)$, $I_hv\in V_h$ is determined by
$$
Q_{k-2m}^K(I_hv)=Q_{k-2m}^Kv,
$$
\begin{align*}
Q_{k-2m+|\alpha|}^F\frac{\partial^{|\alpha|}(I_hv)}{\partial \nu_F^{\alpha}}=\frac{1}{\# \mathcal T_F}\sum_{K\in\mathcal T_F}Q_{k-2m+|\alpha|}^F\frac{\partial^{|\alpha|}(Q_k^Kv)}{\partial \nu_F^{\alpha}}
\end{align*}
for each $K\in\mathcal T_h$, interior $F\in\mathcal F^{r}_h$, $r=1,\cdots,n$, $\alpha\in A_r$, and $|\alpha|\leq m-1$,
where $\mathcal T_F$ is the set of all $n$-dimensional polytopes in $\mathcal T_h$ sharing face $F$.

\begin{lemma}
Let $s\geq m$. It holds
\begin{equation}\label{eq:interpolationestimate}
\sum_{j=0}^mh^j|v-I_hv|_{j}\lesssim h^{\min\{s,k+1\}}|v|_{\min\{s,k+1\}}\quad\forall~v\in H^s(\Omega)\cap H_0^m(\Omega).   
\end{equation}
\end{lemma}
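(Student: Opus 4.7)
My plan is to use the standard virtual-element recipe: combine a local polynomial approximation of $v$ with the norm equivalence \eqref{eq:normequivalence1}. Set $t := \min\{s, k+1\}$. On each $K \in \mathcal{T}_h$ I would construct, via averaged Taylor expansion on a ball containing the patch
\[
\omega_K := \bigcup\{K' \in \mathcal{T}_h : \overline{K'} \cap \overline{K} \neq \emptyset\},
\]
a single polynomial $p_K \in \mathbb{P}_{t-1} \subseteq \mathbb{P}_k$ satisfying the Bramble--Hilbert bound $|v - p_K|_{j, \omega_K} \lesssim h_K^{t-j}|v|_{t, \omega_K}$ for $0 \le j \le t$. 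Writing $v - I_hv = (v - p_K) + (p_K - I_hv)$ on $K$, the first summand is controlled directly. For the second, since both $p_K|_K$ and $I_hv|_K$ belong to $V_k^m(K)$, I will apply \eqref{eq:normequivalence1} to $p_K - I_hv$.

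The key algebraic identity is $Q_k^{K'}p_K = p_K|_{K'}$ for every $K' \subseteq \omega_K$, since $p_K$ is a global polynomial of degree at most $k$. Combined with the defining relations of $I_h$, each degree-of-freedom value of $p_K - I_hv$ can be rewritten as an average over neighboring elements of $Q_k^{K'}(p_K - v)$-type quantities: the interior term reduces to $\|Q_{k-2m}^K(p_K - v)\|_{0,K}$, while on a codim-$r$ face $F$ of $K$ with $K' \in \mathcal T_F$, the relevant DoF becomes $Q_{k-2m+|\alpha|}^F \frac{\partial^{|\alpha|}}{\partial \nu_F^\alpha} Q_k^{K'}(p_K - v)$, with an analogous formula at vertices. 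To bound it I would apply the trace inequality \eqref{L2trace} iteratively, descending from $n$-dimensional $K'$ to $F$ (producing the factor $h_K^{-r/2}$), together with the polynomial inverse inequality \eqref{eq:polyinverse} on $K'$ to absorb the $|\alpha|$ derivatives (factor $h_K^{-|\alpha|}$), obtaining
\[
\Big\|Q_{k-2m+|\alpha|}^F \tfrac{\partial^{|\alpha|}}{\partial \nu_F^\alpha} Q_k^{K'}(p_K - v)\Big\|_{0, F} \lesssim h_K^{-r/2 - |\alpha|}\,\|p_K - v\|_{0, K'}.
\]
The weights $h_K^{r + 2|\alpha|}$ (and $h_K^{n + 2i}$ at vertices) in \eqref{eq:normequivalence1} exactly absorb these scalings, reducing each term to $\lesssim \|p_K - v\|_{0, K'}^2 \lesssim h_K^{2t}|v|_{t, \omega_K}^2$ by Bramble--Hilbert.

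Collecting the interior and boundary contributions gives $|p_K - I_hv|_{j,K} \lesssim h_K^{t-j}|v|_{t, \omega_K}$ for $j = 0, \dots, m$, and combining with the Bramble--Hilbert bound for $v - p_K$ produces $|v - I_hv|_{j,K} \lesssim h_K^{t-j}|v|_{t, \omega_K}$. Multiplying by $h^j$ (using $h_K \le h$ and $t \ge j$ so that $h_K^{t-j}h^j \le h^t$), squaring, summing over $K$, and invoking the bounded-overlap property of the patches $\{\omega_K\}$ (which follows from (A1)--(A2)) yields the stated global estimate \eqref{eq:interpolationestimate}. The main obstacle I foresee is that the polynomial $p_K$ must furnish an approximation of $v$ on the whole patch $\omega_K$, not just on $K$ itself, because the boundary DoFs of $I_hv$ involve $Q_k^{K'}v$ from neighbors; performing the averaged Taylor construction on a single ball containing $\omega_K$ (which has diameter $\eqsim h_K$ under the mesh assumptions) and then transporting all neighbor estimates through the same $p_K$ is what makes the argument go through with uniform constants.
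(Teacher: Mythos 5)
Your proposal is correct, but it takes a different route from the paper. The paper's proof decomposes $v-I_hv$ on each $K$ as $(v-Q_k^Kv)+(Q_k^Kv-I_hv)$, notes $Q_{k-2m}^K(Q_k^Kv-I_hv)=0$, applies \eqref{eq:normequivalence1} to $Q_k^Kv-I_hv$, and is thereby led to jump terms $\|\nabla^i(Q_k^Kv)-\nabla^i(Q_k^{K'}v)\|_{0,F}$ on inter-element faces; these jumps are then collapsed down to codimension-one faces via the inverse inequality for polynomials and the argument of Wang \cite{Wang2001} and Brenner--Wang--Zhao \cite{BrennerWangZhao2004}, and finally controlled through the trace inequality and \eqref{eq:l2projectionestimate}. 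You instead introduce a single patch polynomial $p_K$ and exploit that $Q_k^{K'}p_K=p_K$ for every neighbor $K'\subset\omega_K$, which converts every DoF of $p_K-I_hv$ into an average of $Q_k^{K'}(p_K-v)$-type quantities and therefore avoids explicit inter-element jumps; the price is the patch Bramble--Hilbert estimate on $\omega_K$ and a bounded-overlap argument. Both decompositions appeal to the same norm equivalence, trace inequality, and polynomial inverse inequality, so the ingredients coincide; the paper's version is more self-contained with respect to the element-wise machinery already developed (it never needs a patch construction), while yours sidesteps the citation to the nonconforming jump lemmas and keeps everything in terms of one comparison polynomial. One small point worth stating explicitly in your write-up: since $t=\min\{s,k+1\}>m\geq j$, the Bramble--Hilbert estimate $|v-p_K|_{j,\omega_K}\lesssim h_K^{t-j}|v|_{t,\omega_K}$ is available for all the $j$ you need, and $p_K\in\mathbb P_{t-1}\subseteq\mathbb P_k$ so $p_K|_K\in V_k^m(K)$ and $Q_k^{K'}p_K=p_K$ indeed hold.
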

\begin{proof}
Since $Q_{k-2m}^K(Q_k^Kv-I_hv)=Q_{k-2m}^Kv-Q_{k-2m}^K(I_hv)=0$,
it follows from \eqref{eq:normequivalence1} and the definition of $I_hv$ that
\begin{align*}   
&\quad\sum_{K\in\mathcal T_h}|Q_k^Kv-I_hv|_{j,K}^2\\
&\lesssim \sum_{K\in\mathcal T_h}\sum_{r=1}^{n}\sum_{F\in\mathcal F^{r}(K)}\sum_{\alpha\in A_{r}, |\alpha|\leq m-1}h_K^{r+2|\alpha|-2j}\bigg\|Q_{k-2m+|\alpha|}^{F}\frac{\partial^{|\alpha|}(Q_k^Kv-I_hv)}{\partial \nu_{F}^{\alpha}}\bigg\|_{0,F}^2 \\
&\lesssim \sum_{K\in\mathcal T_h}\sum_{r=1}^{n}\sum_{F\in\mathcal F^{r}(K)}\sum_{\alpha\in A_{r}, |\alpha|\leq m-1}\sum_{K'\in\mathcal T_F}h_K^{r+2|\alpha|-2j}\bigg\|\frac{\partial^{|\alpha|}(Q_k^Kv)}{\partial \nu_{F}^{\alpha}}-\frac{\partial^{|\alpha|}(Q_k^{K'}v)}{\partial \nu_{F}^{\alpha}}\bigg\|_{0,F}^2 \\
&\lesssim \sum_{K\in\mathcal T_h}\sum_{r=1}^{n}\sum_{F\in\mathcal F^{r}(K)}\sum_{i=0}^{m-1}\sum_{K'\in\mathcal T_F}h_K^{r+2i-2j}\|\nabla^i(Q_k^Kv)-\nabla^i(Q_k^{K'}v)\|_{0,F}^2.
\end{align*}
By
the inverse inequality for polynomials~\eqref{eq:polyinverse}, the similar argument as in \cite[Lemma 3.3]{Wang2001} and \cite[Lemma 2.1]{BrennerWangZhao2004}, and the trace inequality \eqref{L2trace},
\begin{align*}
\sum_{K\in\mathcal T_h}|Q_k^Kv-I_hv|_{j,K}^2&\lesssim  \sum_{K\in\mathcal T_h}\sum_{F\in\mathcal F^{1}(K)}\sum_{i=0}^{m-1}\sum_{K'\in\mathcal T_F}h_K^{1+2i-2j}\|\nabla^i(Q_k^Kv)-\nabla^i(Q_k^{K'}v)\|_{0,F}^2 \\
&\lesssim  \sum_{K\in\mathcal T_h}\sum_{i=0}^{m-1}h_K^{1+2i-2j}\|\nabla^iv-\nabla^i(Q_k^Kv)\|_{0,\partial K}^2 \\
&\lesssim  \sum_{K\in\mathcal T_h}\sum_{i=0}^{m-1}h_K^{2i-2j}(|v-Q_k^Kv|_{i,K}^2+h_K^2|v-Q_k^Kv|_{i+1,K}^2).
\end{align*}
Hence
$$
\sum_{K\in\mathcal T_h}|Q_k^Kv-I_hv|_{j,K}^2\lesssim \sum_{K\in\mathcal T_h}\sum_{i=0}^{m}h_K^{2i-2j}|v-Q_k^Kv|_{i,K}^2.
$$
Then we achieve from the triangle inequality that
$$
|v-I_hv|_{j}^2\lesssim \sum_{K\in\mathcal T_h}\sum_{i=0}^{m}h_K^{2i-2j}|v-Q_k^Kv|_{i,K}^2.
$$
Thus \eqref{eq:interpolationestimate} holds from \eqref{eq:l2projectionestimate}.
\end{proof}

\subsection{Error estimate}

With the interpolation error estimate \eqref{eq:interpolationestimate}, we can present the a priori error estimate of the conforming virtual element method \eqref{Hmcfmvem}.
Define a global operator $\Pi_h: V_h\to L^2(\Omega)$ by $(\Pi_hv_h)|_K:=\Pi_k^K(v_h|_K)$ for each $K\in\mathcal T_h$.
For an element-wise smooth function $v$, let the usual squared broken semi-norm
$$
|v|_{m,h}^2:=\sum_{K\in\mathcal T_h}|v|_{m,K}^2.
$$

\begin{theorem}\label{errorestimate}
Let $u\in H^s(\Omega)\cap H_0^m(\Omega)$ with $s\geq m$ be the solution of the polyharmonic equation \eqref{eq:polyharmonic}, and $u_h\in V_h$ be the solution of the conforming virtual element method \eqref{Hmcfmvem}. Assume the mesh $\mathcal T_h$ satisfies conditions (A1) and (A2). Assume
$f\in H^m(\mathcal T_h)$. Then we have
\begin{equation}\label{eq:uherrorestimate}
|u-u_h|_m\lesssim h^{\min\{s,k+1\}-m}|u|_{\min\{s,k+1\}}+\textrm{osc}_h(f),
\end{equation}
\begin{equation}\label{eq:pihuherrorestimate}
|u-\Pi_hu_h|_{m,h}\lesssim h^{\min\{s,k+1\}-m}|u|_{\min\{s,k+1\}}+\textrm{osc}_h(f),
\end{equation}
where $\textrm{osc}_h^2(f):=\sum\limits_{K\in\mathcal T_h}h_K^{2m}\|f-Q_k^Kf\|_{0,K}^2$.
\end{theorem}
\begin{proof}
Let $v_h=I_hu-u_h\in H_0^m(\Omega)$ for simplicity.
Thanks to \eqref{eq:ahKprop1} and \eqref{eq:ahkcoercivity},
\begin{align*}
&\quad \; a_{h,K}(I_hu, v_h) - (\nabla^mu, \nabla^mv_h)_K-c(u, v_h)_K \\
& =a_{h,K}(I_hu-Q_k^Ku, v_h) - (\nabla^m(u-Q_k^Ku), \nabla^mv_h)_K-c(u-Q_k^Ku, v_h)_K \\
&\lesssim (|I_hu-Q_k^Ku|_{m,K}+\|I_hu-Q_k^Ku\|_{0,K})(|v_h|_{m,K}+\|v_h\|_{0,K}) \\
&\quad +(|u-Q_k^Ku|_{m,K}+\|u-Q_k^Ku\|_{0,K})(|v_h|_{m,K}+\|v_h\|_{0,K}) \\
&\lesssim (|u-I_hu|_{m,K}+\|u-I_hu\|_{0,K})(|v_h|_{m,K}+\|v_h\|_{0,K}) \\
&\quad +(|u-Q_k^Ku|_{m,K}+\|u-Q_k^Ku\|_{0,K})(|v_h|_{m,K}+\|v_h\|_{0,K}).
\end{align*} 
By \eqref{eq:l2projectionestimate}, it holds
\begin{align*}
(f, v_h)-\langle f, v_h\rangle&=\sum_{K\in\mathcal T_h}(f, v_h-Q_k^Kv_h)_K=\sum_{K\in\mathcal T_h}(f-Q_k^Kf, v_h-Q_k^Kv_h)_K \\
&\lesssim \sum_{K\in\mathcal T_h}h_K^{m}\|f-Q_k^Kf\|_{0,K}|v_h|_{m,K}\lesssim \textrm{osc}_h(f)|v_h|_{m}.
\end{align*} 
Combining the last two inequalities, we get from \eqref{eq:polyharmonic}, \eqref{eq:interpolationestimate}, \eqref{eq:l2projectionestimate} and the Poincar\'e inequality that
\begin{align*}
a_h(I_hu, v_h)-\langle f, v_h\rangle 
&= a_h(I_hu, v_h) - (\nabla^mu, \nabla^mv_h)-c(u, v_h)+(f, v_h)-\langle f, v_h\rangle \\
&\lesssim (h^{\min\{s,k+1\}-m}|u|_{\min\{s,k+1\}}+\textrm{osc}_h(f))|v_h|_{m}.
\end{align*} 
Then we acquire from
\eqref{eq:ahcoercivity} and \eqref{Hmcfmvem} that
\begin{align*}
|v_h|_m^2 & \eqsim a_h(I_hu-u_h, v_h)=a_h(I_hu, v_h)-\langle f, v_h\rangle \\
&\lesssim (h^{\min\{s,k+1\}-m}|u|_{\min\{s,k+1\}}+\textrm{osc}_h(f))|v_h|_{m}.
\end{align*}
As a result,
$$
|I_hu-u_h|_m \lesssim h^{\min\{s,k+1\}-m}|u|_{\min\{s,k+1\}}+\textrm{osc}_h(f).
$$
Therefore \eqref{eq:uherrorestimate} holds from the last inequality and \eqref{eq:interpolationestimate}.

Next we prove \eqref{eq:pihuherrorestimate}. By \eqref{eq:localprojprop1}, on each $K\in\mathcal T_h$ we have
$$
u-\Pi_k^Ku_h=u-u_h+\Pi_k^K(Q_k^Ku-u_h)-(Q_k^Ku-u_h).
$$
Then it follows from \eqref{eq:normequivalence-kerpi0} and the triangle inequality that
\begin{align*}
|u-\Pi_hu_h|_{m,h}^2&\lesssim |u-u_h|_{m}^2 +\sum_{K\in\mathcal T_h}|\Pi_k^K(Q_k^Ku-u_h)-(Q_k^Ku-u_h)|_{m,K}^2 \\
&\lesssim |u-u_h|_{m}^2+\sum_{K\in\mathcal T_h}|u-Q_k^Ku|_{m,K}^2.
\end{align*}
Finally we arrive at \eqref{eq:pihuherrorestimate} from \eqref{eq:uherrorestimate} and \eqref{eq:l2projectionestimate}.
\end{proof}

Under the assumption that the partition $\mathcal T_h$ is quasi-uniform and $h$ is sufficiently small, we can show that the condition number of the resulting coefficient matrix of the conforming virtual element method \eqref{Hmcfmvem} is $O(h^{2m})$, whose order is only related to the order of the differential operator. See also Section 3.4 in \cite{BeiraodaVeigaDassiRusso2020}.

\section{Numerical results}\label{sec:numericalexamps}

In this section, we provide two examples to numerically verify
the convergence of the $H^m$-conforming virtual element method \eqref{Hmcfmvem} with $c=1$.
Let $\Omega = (0, 1)\times(0, 1)$. And the rectangular domain $\Omega$ is partitioned by the convex polygonal mesh $\mathcal T_0$ and non-convex polygonal mesh $\mathcal T_1$ respectively, shown in Figure~\ref{fig:mesh}.  
The numerical examples are implemented by using the FEALPy package~\cite{fealpy}.

\begin{figure}[htbp]
\centering
\begin{minipage}[t]{0.49\linewidth}
\centering
\includegraphics[width=5cm]{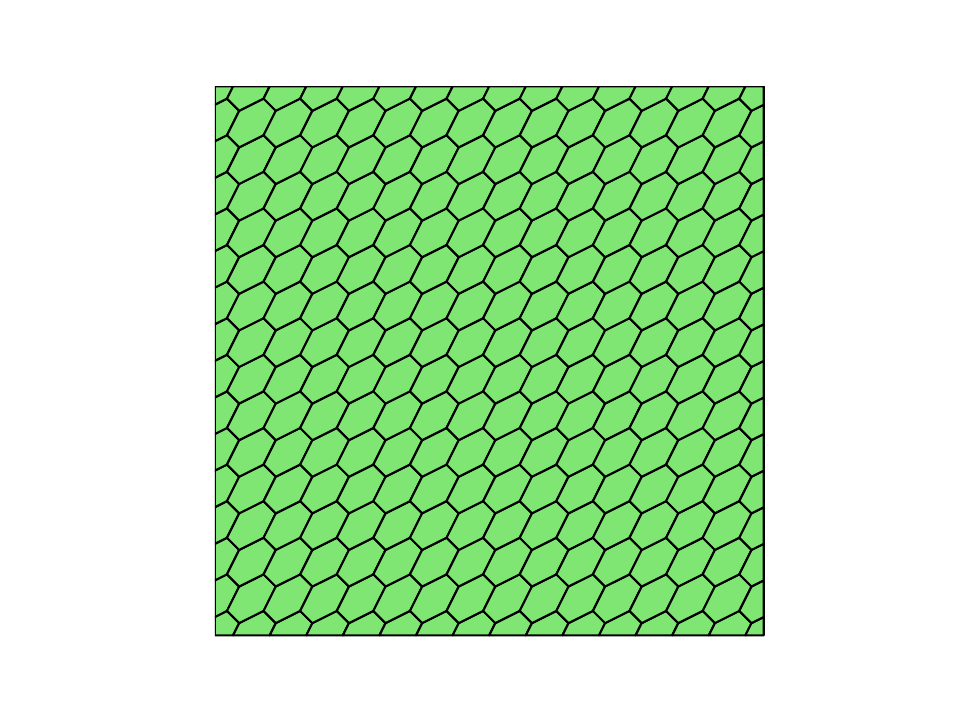}
\end{minipage}%
\begin{minipage}[t]{0.49\linewidth}
\centering
\includegraphics[width=5cm]{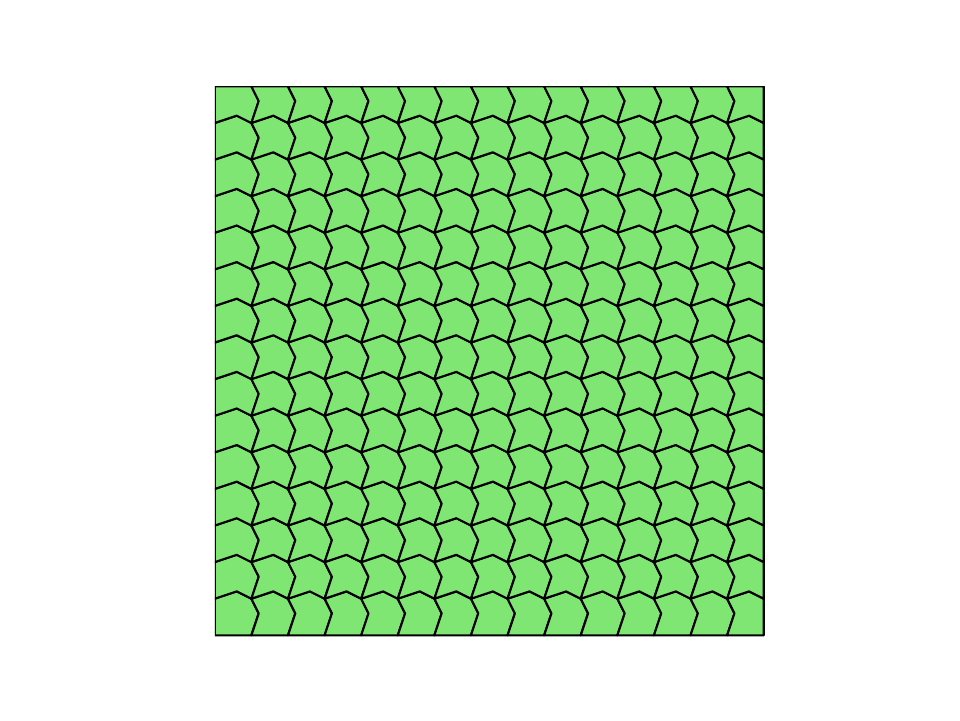}
\end{minipage}%
\centering
\caption{Convex polygon mesh $\mathcal T_0$(left) and non-convex polygon mesh 
$\mathcal T_1$(right).}
\label{fig:mesh}
\end{figure}

\begin{example}\label{exam1}
Consider polyharmonic equation \eqref{eq:polyharmonic} with $m = 2$. Take the exact solution $u = \sin^2(\pi x)\sin^2(\pi y)$, and the right-hand side $f$ is computed from polyharmonic equation \eqref{eq:polyharmonic}.
\end{example}

Choose $k = 2, 3, 4, 5$ for the virtual element method \eqref{Hmcfmvem}. 
The numerical results are listed in Figure \ref{fig:H2error}.
We can see that $|u - \Pi_h u_h|_{2, h} = O(h^{k-1})$, which
coincides with Theorem \ref{errorestimate}. 

\begin{figure}[htbp]
\centering
\begin{minipage}[t]{0.49\linewidth}
\centering
\includegraphics[width=5cm]{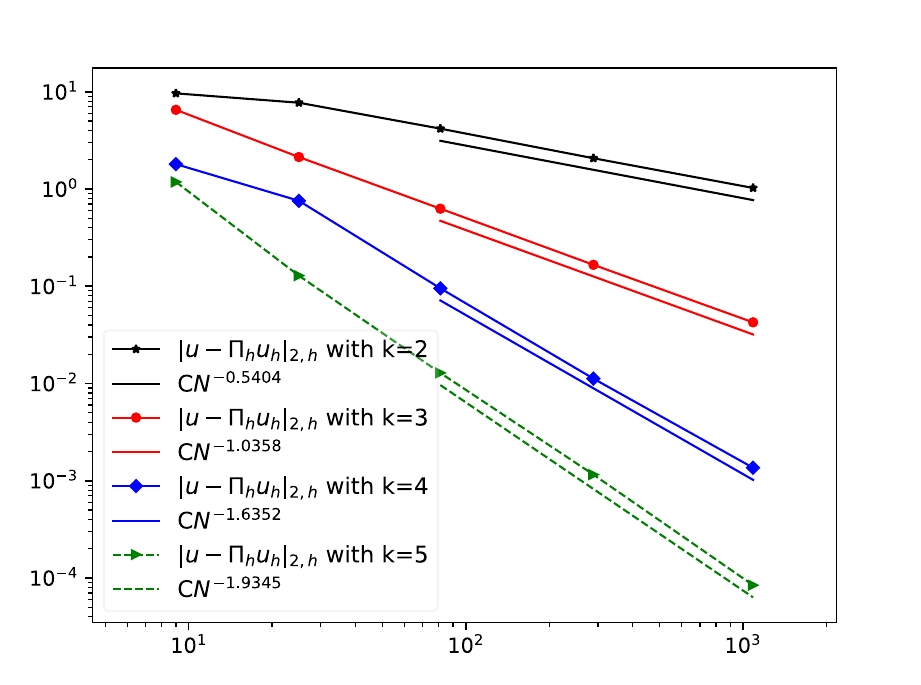}
\end{minipage}%
\begin{minipage}[t]{0.49\linewidth}
\centering
\includegraphics[width=5cm]{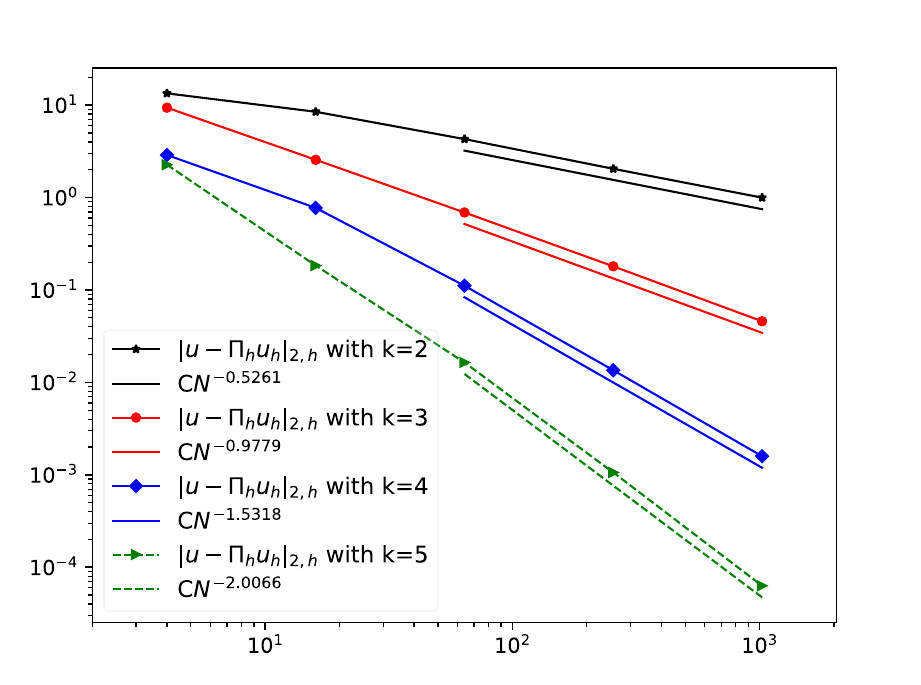}
\end{minipage}%
\centering
\caption{Error $|u - \Pi_h u_h|_{2, h}$ of Example
    \ref{exam1} with $m=2$ on convex polygon mesh $\mathcal T_0$(left) 
and non-convex polygon mesh $\mathcal T_1$(right) with $k = 2, 3, 4, 5$.}
\label{fig:H2error}
\end{figure} 

\begin{example}\label{exam2}
Consider polyharmonic equation \eqref{eq:polyharmonic} with $m = 3$. Take the exact solution $u = \sin^3(\pi x)\sin^3(\pi y)$, and the right-hand side $f$ is computed from polyharmonic equation \eqref{eq:polyharmonic}.
\end{example}

In this example we set $k = 3, 4, 5, 6$, 
and present numerical results in Figure~\ref{fig:H3error}.
We observe from Figure \ref{fig:H3error} that $|u - \Pi_h u_h|_{3, h} = O(h^{k-2})$, which again agrees with Theorem \ref{errorestimate}. 

\begin{figure}[htbp]
\centering
\begin{minipage}[t]{0.49\linewidth}
\centering
\includegraphics[width=5cm]{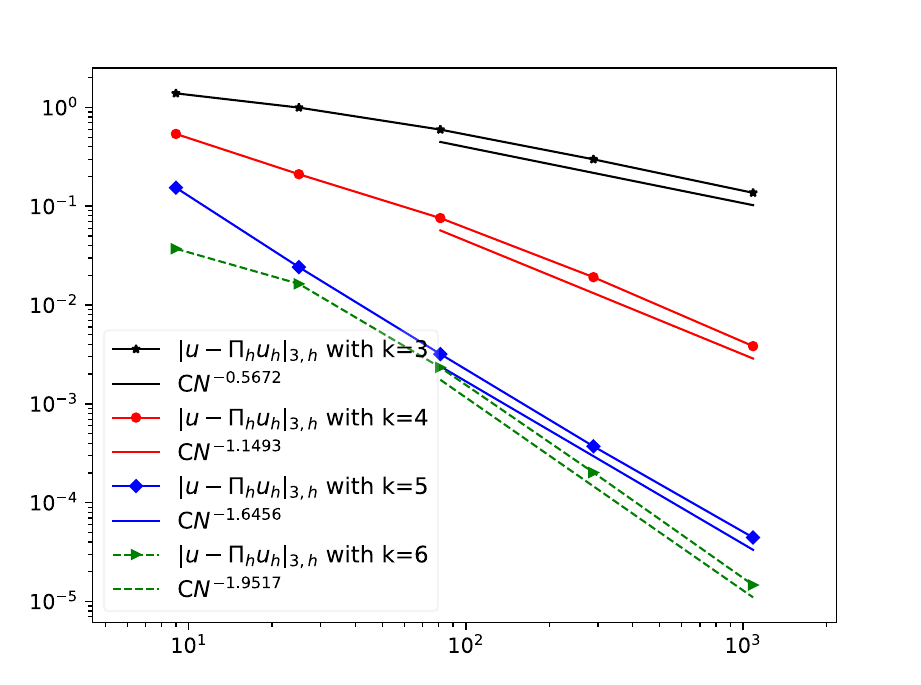}
\end{minipage}%
\begin{minipage}[t]{0.49\linewidth}
\centering
\includegraphics[width=5cm]{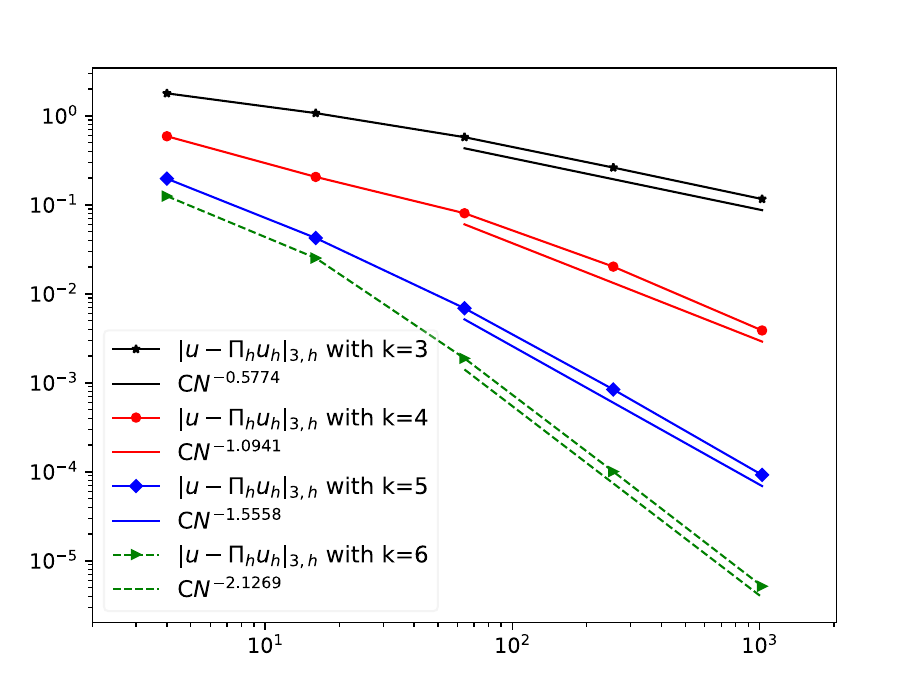}
\end{minipage}%
\centering
\caption{Error $|u - \Pi_h u_h|_{3, h}$ of Example
    \ref{exam2} with $m=3$ on convex polygon mesh $\mathcal T_0$(left) 
and non-convex polygon mesh $\mathcal T_1$(right) with $k = 3, 4, 5, 6$.}
\label{fig:H3error}
\end{figure}

\section*{Acknowledgements}

The author would like to thank Prof. Long Chen in University of California, Irvine for the insightful discussion.

\bibliographystyle{abbrv}
\bibliography{./HmcfmVEM}

\begin{thebibliography}{10}

\bibitem{AdamsFournier2003}
R.~A. Adams and J.~J.~F. Fournier.
\newblock {\em Sobolev spaces}.
\newblock Elsevier/Academic Press, Amsterdam, second edition, 2003.

\bibitem{Agranovich2007}
M.~S. Agranovich.
\newblock To the theory of {D}irichlet and {N}eumann problems for linear
  strongly elliptic systems with {L}ipschitz domains.
\newblock {\em Funct. Anal. Appl.}, 41(4):247--263, 2007.

\bibitem{Agranovich2008}
M.~S. Agranovich.
\newblock Remarks on potential spaces and {B}esov spaces in a {L}ipschitz
  domain and on {W}hitney arrays on its boundary.
\newblock {\em Russ. J. Math. Phys.}, 15(2):146--155, 2008.

\bibitem{AhmadAlsaediBrezziMariniEtAl2013}
B.~Ahmad, A.~Alsaedi, F.~Brezzi, L.~D. Marini, and A.~Russo.
\newblock Equivalent projectors for virtual element methods.
\newblock {\em Comput. Math. Appl.}, 66(3):376--391, 2013.

\bibitem{AntoniettiManziniScacchiVerani2021}
P.~F. Antonietti, G.~Manzini, S.~Scacchi, and M.~Verani.
\newblock A review on arbitrarily regular conforming virtual element methods
  for second- and higher-order elliptic partial differential equations.
\newblock {\em Math. Models Methods Appl. Sci.}, 31(14):2825--2853, 2021.

\bibitem{AntoniettiManziniVerani2018}
P.~F. Antonietti, G.~Manzini, and M.~Verani.
\newblock The fully nonconforming virtual element method for biharmonic
  problems.
\newblock {\em Math. Models Methods Appl. Sci.}, 28(2):387--407, 2018.

\bibitem{AntoniettiManziniVerani2020}
P.~F. Antonietti, G.~Manzini, and M.~Verani.
\newblock The conforming virtual element method for polyharmonic problems.
\newblock {\em Comput. Math. Appl.}, 79(7):2021--2034, 2020.

\bibitem{ArgyrisFriedScharpf1968}
J.~Argyris, I.~Fried, and D.~Scharpf.
\newblock {The TUBA family of plate elements for the matrix displacement
  method}.
\newblock {\em The Aeronautical Journal of the Royal Aeronautical Society},
  72:701--709, 1968.

\bibitem{BeiraodaVeigaDassiRusso2020}
L.~Beir\~{a}o~da Veiga, F.~Dassi, and A.~Russo.
\newblock A {$C^1$} virtual element method on polyhedral meshes.
\newblock {\em Comput. Math. Appl.}, 79(7):1936--1955, 2020.

\bibitem{BeiraodaVeigaLovadinaRusso2017}
L.~Beir\~ao~da Veiga, C.~Lovadina, and A.~Russo.
\newblock Stability analysis for the virtual element method.
\newblock {\em Math. Models Methods Appl. Sci.}, 27(13):2557--2594, 2017.

\bibitem{BeiraodaVeigaBrezziCangianiManziniEtAl2013}
L.~Beir{\~a}o~da Veiga, F.~Brezzi, A.~Cangiani, G.~Manzini, L.~D. Marini, and
  A.~Russo.
\newblock Basic principles of virtual element methods.
\newblock {\em Math. Models Methods Appl. Sci.}, 23(1):199--214, 2013.

\bibitem{BeiraodaVeigaBrezziMariniRusso2014}
L.~Beir{\~a}o~da Veiga, F.~Brezzi, L.~D. Marini, and A.~Russo.
\newblock The hitchhiker's guide to the virtual element method.
\newblock {\em Math. Models Methods Appl. Sci.}, 24(8):1541--1573, 2014.

\bibitem{BeiraoManzini2014}
L.~Beir{\~a}o~da Veiga and G.~Manzini.
\newblock A virtual element method with arbitrary regularity.
\newblock {\em IMA J. Numer. Anal.}, 34(2):759--781, 2014.

\bibitem{BrambleZlamal1970}
J.~H. Bramble and M.~s. Zl\'amal.
\newblock Triangular elements in the finite element method.
\newblock {\em Math. Comp.}, 24:809--820, 1970.

\bibitem{BrennerGuanSung2017}
S.~C. Brenner, Q.~Guan, and L.-Y. Sung.
\newblock Some estimates for virtual element methods.
\newblock {\em Comput. Methods Appl. Math.}, 17(4):553--574, 2017.

\bibitem{BrennerScott2008}
S.~C. Brenner and L.~R. Scott.
\newblock {\em The mathematical theory of finite element methods}.
\newblock Springer, New York, third edition, 2008.

\bibitem{BrennerSung2005}
S.~C. Brenner and L.-Y. Sung.
\newblock {$C\sp 0$} interior penalty methods for fourth order elliptic
  boundary value problems on polygonal domains.
\newblock {\em J. Sci. Comput.}, 22/23:83--118, 2005.

\bibitem{BrennerSung2018}
S.~C. Brenner and L.-Y. Sung.
\newblock Virtual element methods on meshes with small edges or faces.
\newblock {\em Math. Models Methods Appl. Sci.}, 28(7):1291--1336, 2018.

\bibitem{BrennerSung2019}
S.~C. Brenner and L.-Y. Sung.
\newblock Virtual enriching operators.
\newblock {\em Calcolo}, 56(4):Paper No. 44, 25, 2019.

\bibitem{BrennerWangZhao2004}
S.~C. Brenner, K.~Wang, and J.~Zhao.
\newblock Poincar\'e-{F}riedrichs inequalities for piecewise {$H\sp 2$}
  functions.
\newblock {\em Numer. Funct. Anal. Optim.}, 25(5-6):463--478, 2004.

\bibitem{BrezziMarini2013}
F.~Brezzi and L.~D. Marini.
\newblock Virtual element methods for plate bending problems.
\newblock {\em Comput. Methods Appl. Mech. Engrg.}, 253:455--462, 2013.

\bibitem{ChenHuang2018}
L.~Chen and J.~Huang.
\newblock Some error analysis on virtual element methods.
\newblock {\em Calcolo}, 55(1):55:5, 2018.

\bibitem{ChenHuang2020}
L.~Chen and X.~Huang.
\newblock Nonconforming virtual element method for {$2m$}th order partial
  differential equations in {$\Bbb{R}^n$}.
\newblock {\em Math. Comp.}, 89(324):1711--1744, 2020.

\bibitem{ChenHuang2021Cmgeodecomp}
L.~Chen and X.~Huang.
\newblock Geometric decompositions of the simplex lattice and smooth finite
  elements in arbitrary dimension.
\newblock {\em arXiv preprint arXiv:2111.10712}, 2021.

\bibitem{ChenHuang2022}
L.~Chen and X.~Huang.
\newblock {\em Discrete {H}essian complexes in three dimensions}.
\newblock SEMA SIMAI Springer Series. Springer International Publishing, 2022.

\bibitem{Ciarlet1978}
P.~G. Ciarlet.
\newblock {\em The finite element method for elliptic problems}.
\newblock North-Holland Publishing Co., Amsterdam, 1978.

\bibitem{FuGuzmanNeilan2020}
G.~Fu, J.~Guzm\'{a}n, and M.~Neilan.
\newblock Exact smooth piecewise polynomial sequences on {A}lfeld splits.
\newblock {\em Math. Comp.}, 89(323):1059--1091, 2020.

\bibitem{Grisvard1985}
P.~Grisvard.
\newblock {\em Elliptic problems in nonsmooth domains}.
\newblock Pitman (Advanced Publishing Program), Boston, MA, 1985.

\bibitem{HuLinWu2021}
J.~Hu, T.~Lin, and Q.~Wu.
\newblock A construction of {$C^r$} conforming finite element spaces in any
  dimension.
\newblock {\em arXiv preprint arXiv:2103.14924}, 2021.

\bibitem{HuZhang2015a}
J.~Hu and S.~Zhang.
\newblock The minimal conforming {$H^k$} finite element spaces on {$R^n$}
  rectangular grids.
\newblock {\em Math. Comp.}, 84(292):563--579, 2015.

\bibitem{HuZhang2017}
J.~Hu and S.~Zhang.
\newblock A canonical construction of {$H^m$}-nonconforming triangular finite
  elements.
\newblock {\em Ann. of Appl. Math.}, 33(3):266--288, 2017.

\bibitem{HuZhang2019}
J.~Hu and S.~Zhang.
\newblock A cubic {$H^3$}-nonconforming finite element.
\newblock {\em Commun. Appl. Math. Comput.}, 1(1):81--100, 2019.

\bibitem{HuangYu2021}
J.~Huang and Y.~Yu.
\newblock A medius error analysis for nonconforming virtual element methods for
  {P}oisson and biharmonic equations.
\newblock {\em J. Comput. Appl. Math.}, 386:113229, 20, 2021.

\bibitem{Huang2020}
X.~Huang.
\newblock Nonconforming virtual element method for 2{$m$}th order partial
  differential equations in {$\Bbb R^n$} with {$m>n$}.
\newblock {\em Calcolo}, 57(4):Paper No. 42, 38, 2020.

\bibitem{LambertiProvenzano2020}
P.~D. Lamberti and L.~Provenzano.
\newblock On trace theorems for {S}obolev spaces.
\newblock {\em Matematiche (Catania)}, 75(1):137--165, 2020.

\bibitem{Verchota1990}
G.~Verchota.
\newblock The {D}irichlet problem for the polyharmonic equation in {L}ipschitz
  domains.
\newblock {\em Indiana Univ. Math. J.}, 39(3):671--702, 1990.

\bibitem{Zenisek1970}
A.~\v{Z}en\'\i \v{s}ek.
\newblock Interpolation polynomials on the triangle.
\newblock {\em Numer. Math.}, 15:283--296, 1970.

\bibitem{Zenisek1974a}
A.~\v{Z}en\'{\i}\v{s}ek.
\newblock Tetrahedral finite {$C^{(m)}$}-elements.
\newblock {\em Acta Univ. Carolinae---Math. et Phys.}, 15(1-2):189--193, 1974.

\bibitem{Wang2001}
M.~Wang.
\newblock On the necessity and sufficiency of the patch test for convergence of
  nonconforming finite elements.
\newblock {\em SIAM J. Numer. Anal.}, 39(2):363--384, 2001.

\bibitem{WangXu2006}
M.~Wang and J.~Xu.
\newblock The {M}orley element for fourth order elliptic equations in any
  dimensions.
\newblock {\em Numer. Math.}, 103(1):155--169, 2006.

\bibitem{WangXu2013}
M.~Wang and J.~Xu.
\newblock Minimal finite element spaces for {$2m$}-th-order partial
  differential equations in {$R^n$}.
\newblock {\em Math. Comp.}, 82(281):25--43, 2013.

\bibitem{fealpy}
H.~Wei, Y.~Huang, and C.~Chen.
\newblock {FEALPy}: Finite element analysis library in python.
\newblock https://github.com/weihuayi/fealpy, Xiangtan University, 2017-2023.

\bibitem{WuXu2017}
S.~Wu and J.~Xu.
\newblock {$\mathcal P_m$} interior penalty nonconforming finite element
  methods for {$2m$}-th order {PDE}s in {$R^n$}.
\newblock {\em arXiv preprint arXiv:1710.07678}, 2017.

\bibitem{WuXu2019}
S.~Wu and J.~Xu.
\newblock Nonconforming finite element spaces for {$2m$}th order partial
  differential equations on {$\Bbb{R}^n$} simplicial grids when {$m=n+1$}.
\newblock {\em Math. Comp.}, 88(316):531--551, 2019.

\bibitem{Xu2020}
J.~Xu.
\newblock Finite neuron method and convergence analysis.
\newblock {\em Commun. Comput. Phys.}, 28(5):1707--1745, 2020.

\bibitem{Zhang2009a}
S.~Zhang.
\newblock A family of 3{D} continuously differentiable finite elements on
  tetrahedral grids.
\newblock {\em Appl. Numer. Math.}, 59(1):219--233, 2009.

\bibitem{Zhang2016a}
S.~Zhang.
\newblock A family of differentiable finite elements on simplicial grids in
  four space dimensions.
\newblock {\em Mathematica Numerica Sinica}, 38(3):309--324, 2016.

\bibitem{ZhaoChenZhang2016}
J.~Zhao, S.~Chen, and B.~Zhang.
\newblock The nonconforming virtual element method for plate bending problems.
\newblock {\em Math. Models Methods Appl. Sci.}, 26(9):1671--1687, 2016.

\bibitem{ZhaoZhangChenMao2018}
J.~Zhao, B.~Zhang, S.~Chen, and S.~Mao.
\newblock The {M}orley-type virtual element for plate bending problems.
\newblock {\em J. Sci. Comput.}, 76(1):610--629, 2018.

\end{thebibliography}
\end{document}